\newcommand{\newword}[1]{\textbf{\emph{#1}}}
\newcommand{\into}{\hookrightarrow}
\newcommand{\abs}[1]{\left\lvert#1\right\rvert}
\newcommand{\norm}[1]{\left\lVert#1\right\rVert}
\newcommand{\Z}{\ensuremath{\mathbb{Z}}}
\newcommand{\R}{\ensuremath{\mathbb{R}}}
\newcommand{\C}{\ensuremath{\mathbb{C}}}
\newcommand{\Q}{\ensuremath{\mathbb{Q}}}
\newcommand{\M}{\mathcal{M}}
\renewcommand{\P}{\ensuremath{\mathbb{P}}}
\renewcommand{\H}{\ensuremath{\mathcal{H}}}
\renewcommand{\bar}[1]{\overline{#1}}
\DeclareMathOperator{\Hilb}{Hilb}
\DeclareMathOperator{\diag}{diag}
\DeclareMathOperator{\Pic}{Pic}
\DeclareMathOperator{\Id}{Id}
\DeclareMathOperator{\Aut}{Aut}
\DeclareMathOperator{\im}{im}
\DeclareMathOperator{\Sym}{Sym}
\DeclareMathOperator{\pr}{pr}
\newcommand{\cF}{\mathcal{F}}
\newcommand{\cH}{\mathcal{H}}
\newcommand{\cJ}{\mathcal{J}}
\newcommand{\cL}{\mathcal{L}}
\newcommand{\cM}{\mathcal{M}}
\newcommand{\cR}{\mathcal{R}}
\newcommand{\cU}{\mathcal{U}}
\newcommand{\cV}{\mathcal{V}}
\newcommand{\bA}{\mathbf{A}}
\newcommand{\bB}{\mathbf{B}}
\newcommand{\bP}{\mathbf{P}}
\theoremstyle{plain}
\newtheorem{theorem}{Theorem}
\numberwithin{theorem}{section}
\newtheorem{thm}[theorem]{Theorem}
\newtheorem{prop}[theorem]{Proposition}
\newtheorem{cor}[theorem]{Corollary}
\newtheorem{lem}[theorem]{Lemma}
\theoremstyle{definition}
\newtheorem{Definition/Theorem}[theorem]{Definition/Theorem}
\newtheorem{Definition/Proposition}[theorem]{Definition/Proposition}
\newtheorem{Def}[theorem]{Definition}
\newtheorem{Defthm}[theorem]{Definition-Theorem}
\newtheorem{ex}[theorem]{Example}
\newtheorem{Corollary/Definition}[theorem]{Corollary/Definition}
\theoremstyle{remark}
\newtheorem{rem}[theorem]{Remark}
\renewcommand{\H}{\cH}
\renewcommand{\M}{\cM}
\newcommand{\Hbar}{\bar{\cH}}
\newcommand{\Mbar}{\bar{\cM}}
\newcommand{\Ubar}{\bar{\cU}}
\newcommand{\Hfull}{\cH^{\mathrm{full}}}
\newcommand{\Hbarfull}{\bar{\cH}^{\mathrm{full}}}
\newcommand{\Afull}{\bA^{\mathrm{full}}}
\renewcommand{\rm}{\mathrm{rm}}
\newcommand{\cv}{\mathrm{cv}}
\renewcommand{\setminus}{\smallsetminus}
\renewcommand{\vert}{\mathrm{vert}}
\newcommand{\be}{\boldsymbol{\epsilon}}
\DeclareMathOperator{\Thurst}{Thurst}
\newcommand{\rat}{
\tikz[minimum height=2ex]
  \path[dashed,->]
   node (a)            {}
   node (b) at (1em,0) {}
  ($(a.center)+(0,-.07)$) edge ($(b.center)+(0,-.07)$);
}
\newcommand{\ratt}{
\tikz[minimum height=2ex]
  \path[dashed,->]
   node (a)            {}
   node (b) at (1em,0) {}
  ($(a.center)+(0,.0)$) edge ($(b.center)+(0,.0)$)
  ($(a.center)+(0,-.14)$) edge ($(b.center)+(0,-.14)$);
}
\DeclareMathOperator{\md}{md}
\DeclareMathOperator{\br}{br}
\DeclareMathOperator{\Gr}{Gr}
\DeclareMathOperator{\full}{full}
\definecolor{darkgreen}{rgb}{0,0.7,0}
\begin{document}
\begin{frontmatter}
  \title{Hurwitz correspondences on compactifications of $\M_{0,N}.$}
  \author{Rohini Ramadas}
  \ead{ramadas@umich.edu}
  \address{Department
    of Mathematics, University of Michigan, Ann Arbor, MI 48109}
  \date{\today}
  \begin{keyword}
    Dynamical degrees\sep Hurwitz spaces\sep $\Mbar_{0,N}$\sep rational correspondences.
  \end{keyword}
  \begin{abstract}
    Hurwitz correspondences are certain multivalued self-maps of the moduli space $\M_{0,N}$. They arise in the study of Thurston's topological characterization of rational functions. We consider the dynamics of Hurwitz correspondences and ask: On which compactifications of $\M_{0,N}$ should they be studied? We compare a Hurwitz correspondence $\H$ across various modular compactifications of $\M_{0,N}$, and find a weighted stable curves compactification $X_N^\dagger$ that is optimal for its dynamics. We use $X_N^\dagger$ to show that the $k$th dynamical degree of $\H$ is the absolute value of the dominant eigenvalue of the pushforward induced by $\H$ on a natural quotient of $H_{2k}(\Mbar_{0,N})$.
  \end{abstract}
\end{frontmatter}

\section{Introduction}
We recall the moduli space $\M_{0,\bP}$, a quasiprojective variety
parametrizing smooth genus zero curves with marked points labeled by
the elements of a finite set $\bP.$ Let $\H$ be a \emph{Hurwitz space}
parametrizing maps, with prescribed branching, from one $\bP$-marked
genus zero curve to another. $\H$ admits two maps to $\M_{0,\bP}$ ---
a `target curve' map $\pi_1$, and a `source curve' map $\pi_2$. If all
the branch values of the target curve are marked, $\pi_1$ is a
covering map, so $\pi_2\circ\pi_1^{-1}$ is a multivalued map --- a
\emph{Hurwitz correspondence} --- from $\M_{0,\bP}$ to itself. We
study the dynamics of Hurwitz correspondences via invariants called
\emph{dynamical degrees}.

Dynamical degrees are numerical invariants associated to a self-map of
a smooth projective variety
(\cite{Friedland1991,RussakovskiiShiffman1997}). They measure
dynamical complexity of iteration. For $g:X\to X$ a surjective regular
map, the $k$th dynamical degree of $g$ is the absolute value of the
dominant eigenvalue of the pullback $g^*:H^{k,k}(X)\to H^{k,k}(X).$
For $g$ a rational self-map or a rational multivalued self-map, we may
not have $(g^n)^*=(g^*)^n$. In these cases the $k$th dynamical degree
is defined to be
\begin{align*}
  \lim_{n\to\infty}\norm{(g^n)^*:H^{k,k}(X)\to H^{k,k}(X)}^{1/n}.
\end{align*}
Suppose we do have $(g^n)^*=(g^*)^n$ on $H^{k,k}(X)$ --- $g$ may not
be regular, but it behaves like a regular map in terms of its action
on $H^{k,k}(X).$ Then $g$ is called $k$-stable, and the $k$th
dynamical degree of $g$ is the absolute value of the dominant
eigenvalue of $g^*:H^{k,k}(X)\to H^{k,k}(X)$. Computing or
characterizing dynamical degrees of a map $g$ which is \emph{not}
$k$-stable involves dealing with the pullbacks along infinitely many
iterates $g^n$, and is notoriously difficult
(\cite{Roeder2013,Bedford2011}). The dynamical degrees of a map
provide information about a more fundamental invariant --- topological
entropy. The topological entropy of a regular map is the logarithm of
the largest of its dynamical degrees (\cite{Gromov2003}); the
topological entropy of a rational map or rational multivalued map is
bounded from above by the logarithm of the largest of its dynamical
degrees (\cite{DinhSibony2005,DinhSibony2008}).

$\M_{0,\bP}$ is not compact --- to make sense of the dynamical degrees
of a Hurwitz correspondence $\H$ we must consider it as a
\emph{rational} multivalued self-map of some projective
compactification $X_{\bP}$ of $\M_{0,\bP}.$ We are free to choose this
compactification $X_{\bP}$ for convenience --- the dynamical degrees
of $\H$ are birational invariants
(\cite{DinhSibony2005,DinhSibony2008,Truong2015,Ramadas2016}) that
purely reflect its dynamics on the interior, $\M_{0,\bP}$. Koch and
Roeder proved in \cite{KochRoeder2015} that some special examples are
$k$-stable on the stable curves compactification $\Mbar_{0,\bP}$, and
computed their dynamical degrees. In this work, we exploit the moduli
space interpretation of Hurwitz correspondences to provide an account
of their homological actions on various compactifications of
$\M_{0,\bP}$. Together with a companion paper \cite{Ramadas2016}, this
comprises the first systematic study of the dynamical degrees of any
natural family of rational multivalued maps.

We first prove, jointly with Sarah Koch and David Speyer:

\medskip

\noindent\textbf{Proposition \ref{prop:KComposable}} (Koch, Ramadas,
Speyer)\textbf{.}  \textit{Let $\H$ be a Hurwitz correspondence on
  $\M_{0,\bP}.$ Then for all $k$, $\H$ is $k$-stable as a rational
  multivalued self-map of $\Mbar_{0,\bP}$.}

\medskip

Since $H^{k,k}(\Mbar_{0,\bP})=H^{2k}(\Mbar_{0,\bP}),$ and since
pushforward and pullback maps are dual to each other (Section
\ref{sec:RationalCorrespondences}), we obtain:

\medskip

\noindent \textbf{Corollary \ref{cor:LargestEigenvalue}} (Koch,
Ramadas, Speyer)\textbf{.} \textit{The $k$th dynamical degree of $\H$ is the
absolute value of the dominant eigenvalue of the induced pushforward
  \begin{align*}
    [\H]_*:H_{2k}(\Mbar_{0,\bP})\to H_{2k}(\Mbar_{0,\bP}).
  \end{align*}}

In fact, $[\H]_*$ preserves the cone of effective classes in
$H_{2k}(\Mbar_{0,\bP})$, so it follows from the theory of
cone-preserving operators that $[\H]_*$ has a nonnegative dominant
eigenvalue, with a \emph{pseudoeffective} eigenvector (Remark
\ref{rem:EffectiveCone}).

We use Harris and Mumford's compactification of $\H$ by a moduli space
of \emph{admissible covers} (\cite{HarrisMumford1982}) to show that
the multivalued map $\pi_2\circ\pi_1^{-1}$, considered as a map from
$\M_{0,\bP}$ to $\Sym^d(\M_{0,\bP})$, extends to a \emph{regular} map
from $\Mbar_{0,\bP}$ to $\Sym^d(\Mbar_{0,\bP}).$ Thus, Hurwitz
correspondences extend to $\Mbar_{0,\bP}$ ``without indeterminacy.''
Proposition \ref{prop:KComposable} follows as a consequence.

We next introduce a filtration
$\{\Lambda_{\bP}^{\le\lambda}\}_\lambda$ of $H_{2k}(\Mbar_{0,\bP})$
indexed by the partially ordered set $\{\mbox{partitions $\lambda$ of
  $k$}\}$ (Section \ref{sec:Filtration}). We show:

\medskip

\noindent\textbf{Theorem \ref{thm:PreservesFiltration}.} \textit{Let $\H$ be
any Hurwitz correspondence on $\M_{0,\bP}$. Then
$[\H]_*:H_{2k}(\Mbar_{0,\bP})\to H_{2k}(\Mbar_{0,\bP})$ sends each
subspace $\Lambda_{\bP}^{\le\lambda}$ to itself.}

\medskip

Thus the operator $[\H]_*$ can be written, in multiple different ways,
as a block-lower-triangular matrix. The blocks give the induced action
of $[\H]_*$ on successive quotients of subspaces of the filtration
$\{\Lambda_{\bP}^{\le\lambda}\}.$

Which of these blocks contains the dominant eigenvalue --- the
dynamical degree? Set
$$\Lambda_{\bP}^{<(k)}:=\sum_{\Lambda_{\bP}^{\le\lambda}\subsetneq
  H_{2k}(\Mbar_{0,\bP})}\Lambda_{\bP}^{\le\lambda}$$
and
$$\Omega_{\bP}^k:=H_{2k}(\Mbar_{0,\bP})/\Lambda_{\bP}^{<(k)}.$$
Then $\Omega_{\bP}^k$ is the smallest possible nonzero quotient of
$H_{2k}(\Mbar_{0,\bP})$ obtainable by subspaces in the filtration
$\{\Lambda_{\bP}^{\le\lambda}\}.$ The induced action
$[\H]_*:\Omega_{\bP}^k\to\Omega_{\bP}^k$ gives the topmost block of
the block-lower-triangular matrix
$[\H]_*:H_{2k}(\Mbar_{0,\bP})\to H_{2k}(\Mbar_{0,\bP}).$ We show:

\medskip

\noindent\textbf{Theorem \ref{thm:DynamicalDegreeInTopBlock}.}
\textit{The $k$th dynamical degree of $\H$ is the absolute value of
  the dominant eigenvalue of
  $[\H]_*:\Omega_{\bP}^k\to\Omega_{\bP}^k.$}

\medskip

This considerably increases the efficiency of computing dynamical
degrees; for example, when $k=\dim\M_{0,\bP}-1,$ the dimension of
$H_{2k}(\Mbar_{0,\bP})$ is
$\frac{2^{\abs{\bP}}-\abs{\bP}^2+\abs{\bP}-2}{2},$ while the dimension
of $\Omega_{\bP}^k$ is only $\abs{\bP}.$

We prove Theorem \ref{thm:DynamicalDegreeInTopBlock} by finding an
alternate compactification $X_{\bP}^{\dagger}$ of $\M_{0,\bP}$ (a
Hassett moduli space of \emph{weighted} stable curves) with a
birational morphism $\rho:\Mbar_{0,\bP}\to X_{\bP}^{\dagger}$ that
contracts the cycles in $\Lambda_{\bP}^{<(k)}.$ Interestingly,
for $k\ge\frac{\dim\M_{0,N}}{2},$ the \emph{only} $k$-cycles
contracted by $\rho$ are those in $\Lambda_{\bP}^{<(k)}$, and although
Hurwitz correspondences do not extend to $X_{\bP}^\dagger$ without
indeterminacy, they are $k$-stable on $X_{\bP}^\dagger.$

$\Mbar_{0,\bP}$ has ``enough boundary'' to resolve any potential
indeterminacy of $\H$. However, Theorems \ref{thm:PreservesFiltration}
and \ref{thm:DynamicalDegreeInTopBlock} together imply that the
boundary $\Mbar_{0,\bP}\setminus\M_{0,\bP}$, and the homology groups
of $\Mbar_{0,\bP}$, are too large. The action of $[\H]_*$ on
$\Omega_{\bP}^k$ is ``intrinsic'' to the dynamics of $\H$ on
$\M_{0,\bP}$ --- this action determines the dynamical degree, and for
$k\ge\frac{\dim\M_{0,N}}{2}$ is isomorphic to the $k$-stable
homological action of $\H$ on the birational model $X_{\bP}^\dagger.$
In contrast, the action on the subspace $\Lambda_{\bP}^{<(k)}$ is an
artifact of the compactification $\Mbar_{0,\bP}$ --- this action is
``contained in the boundary'' (Corollary
\ref{cor:ContainedInBoundary}). We note that Theorems
\ref{thm:PreservesFiltration} and \ref{thm:DynamicalDegreeInTopBlock}
do not help give an upper bound for the entropy of $\H$. In
(\cite{Ramadas2016}), we show that for any Hurwitz correspondence
$\H$, the sequence $k\mapsto(\mbox{$k$th dynamical degree of $\H$})$
is nonincreasing. Thus the largest dynamical degree --- the one
providing an upper bound for entropy --- is the 0th. This is the
topological degree of the `target curve' map $\pi_1:\H\to\M_{0,\bP};$
a \emph{Hurwitz number}.

Our study is partly motivated by a connection to Teichm\"uller
theory. Hurwitz correspondences arise in the context of a criterion
given by W. Thurston (\cite{DouadyHubbard1993}) for the existence of
an algebraic self-map of $\P^1$ of a given topological type. Let
$\phi:S^2\to S^2$ be an orientation-preserving branched covering from
a topological 2-sphere to itself. When is $\phi$ conjugate, up to
homotopy, to a rational function $f:\P^1\to\P^1$? Suppose $\phi$ has
finite \emph{post-critical set}
$$\bP:=\{\phi^n(x)|n>0,\mbox{$x$ a critical point of $\phi$\}}.$$
Denote by $\mathcal{T}(S^2,\bP)$ the \emph{Teichm\"uller space}
parametrizing complex structures on the marked sphere
$(S^2,\bP)$. $\mathcal{T}(S^2,\bP)$ is a nonalgebraic complex
manifold, and the universal cover of $\M_{0,\bP}$. Given one complex
structure on $(S^2,\bP)$, one may pull it back along the branched
covering $\phi$ to obtain another. This defines a holomorphic self-map
of $\mathcal{T}(S^2,\bP)$ called the Thurston pullback map (Section
\ref{sec:Teichmuller}). A fixed point of the Thurston pullback map is
a complex structure under which $\phi$ is identified with a rational
function $f:\P^1\to\P^1.$ Although the Thurston pullback map does not
descend to a self-map of $\M_{0,\bP}$, it does descend to a
multivalued self-map --- in fact, a Hurwitz correspondence (Koch,
\cite{Koch2013}). The dynamics of Hurwitz correspondences thus
algebraically record the dynamics of the nonalgebraic Thurston
pullback map.

\medskip

\noindent\textbf{Outline.} Sections
\ref{sec:RatCorDynDeg} through \ref{sec:AdmissibleCovers} give
background on, in order, rational correspondences, Hurwitz
correspondences, compactifications of $\M_{0,N}$, and the admissible
covers compactifications of Hurwitz spaces. Our results are in
Sections \ref{sec:HurwitzStableCurves}, \ref{sec:Filtration}, and
\ref{sec:KComposability}. In Section \ref{sec:HurwitzStableCurves}, we
show that Hurwitz correspondences are $k$-stable on $\Mbar_{0,N}$.  In
Section \ref{sec:Filtration}, we show they preserve a natural
filtration of $H_{2k}(\Mbar_{0,N}).$ In Section
\ref{sec:KComposability}, we use alternate compactifications of
$\M_{0,N}$ to investigate where in this filtration the dynamical
degree lies.

\section{Acknowledgments}
I am grateful to my advisor David Speyer and to my co-advisor Sarah
Koch for introducing me to Hurwitz correspondences and for excellent
guidance along the way. Proposition \ref{prop:KComposable} and
Corollary \ref{cor:LargestEigenvalue} are joint with them. I am
grateful to Rob Silversmith, Karen Smith, Anand Deopurkar, Anand
Patel, and Renzo Cavalieri for useful conversations, to Felipe
P\'erez, Paul Reschke, Mattias Jonsson, and Jake Levinson for helpful
comments on earlier drafts, and to Rob Silversmith for help with
typing and figures. 

Funding: This research was conducted at the University of Michigan,
partially supported by NSF grants 0943832, 1045119, and 1068190.

\section{Conventions}
All varieties and schemes are over $\C$. For $X$ a smooth projective
variety, we denote by $H_c(X)$ and $H^c(X)$ its $c$th singular
homology and cohomology groups respectively in the analytic topology
with coefficients in $\R$. There are ``Poincar\'e duality''
isomorphisms between $H^c(X)$ and $H_{\dim_\R X-c}(X).$ These allow us
to define pushforward maps on cohomology groups and pullback maps on
homology groups, and cup product of homology classes. For $X$ any
variety, we denote by $A_k(X)$ the Chow group of $k$-cycles on $X$ up
to rational equivalence.

All curves in this paper are projective reduced curves, connected but
not necessarily irreducible, of arithmetic genus zero.

A partition $\lambda$ of a positive integer $k$ is a multiset of
positive integers whose sum with multiplicity is $k$. For example, we
write $(1,1,2)$ for the partition $4=1+1+2$. If $\lambda(j)$ is a
partition of $k(j),$ then we denote by $\cup_j\lambda(j)$ the multiset
union, which is a partition of $\sum_jk(j).$ A multiset $\lambda_1$ is
a \newword{submultiset} of $\lambda_2$ if for all $r\in\lambda_1,$ the
multiplicity of occurrence of $r$ in $\lambda_1$ is less than or equal
to the multiplicity of occurrence of $r$ in $\lambda_2$. A set
partition of a finite set $\bA$ is a set of nonempty subsets of $\bA,$
with pairwise empty intersection, whose union is $\bA.$

Let $\Lambda$ be a set with some algebraic structure, e.g. a vector
space. A \newword{poset-filtration} of $\Lambda$ is a collection
$\Lambda^\lambda$ of sub-objects of $\Lambda$ indexed by elements of a
partially ordered set $(\{\lambda\},\le)$ with the property that
$\Lambda^{\lambda_1}\subseteq\Lambda^{\lambda_2}$ whenever
$\lambda_1\le\lambda_2.$

\section{Rational correspondences and dynamical degrees}\label{sec:RatCorDynDeg}
In this section $X$, $X_1,$ $X_2,$ and $X_3$ are smooth irreducible
projective varieties.
\subsection{Rational
  correspondences}\label{sec:RationalCorrespondences}
A rational correspondence from $X_1$ to $X_2$ is a multivalued map
from a dense open set in $X_1$ to $X_2.$
\begin{Def}
  A \newword{rational correspondence}
  $(\Gamma,\pi_1,\pi_2):X_1\ratt X_2$ is a diagram
  \begin{center}
    \begin{tikzpicture}
      \matrix(m)[matrix of math nodes,row sep=3em,column
      sep=4em,minimum width=2em] {
        &\Gamma&\\
        X_1&&X_2\\}; \path[-stealth] (m-1-2) edge node [above left]
      {$\pi_1$} (m-2-1); \path[-stealth] (m-1-2) edge node
      [above right]
      {$\pi_2$} (m-2-3);
    \end{tikzpicture}
  \end{center}
  where $\Gamma$ is a smooth quasiprojective variety, not necessarily
  irreducible, and the restriction of $\pi_1$ to every irreducible
  component of $\Gamma$ is dominant and generically finite.
\end{Def}
We sometimes suppress part of the notation and write $\Gamma:X_1\ratt
X_2$ for the rational correspondence $(\Gamma,\pi_1,\pi_2):X_1\ratt
X_2$.

Over a dense open subset $U_1\subseteq X_1,$ $\pi_1$ is a finite
covering map of some degree $d$, so $\pi_2\circ\pi_1^{-1}$ defines a
multivalued map from $U_1$ to $X_2$ and induces a regular map from
$U_1$ to $\Sym^d(X_2)$. Outside $U_1,$ however, the fibers of $\pi_1$
could be empty or positive-dimensional, and it may be impossible to
extend this to a multivalued map from $X_1$ to $X_2$, respectively a
regular map from $X_1$ to $\Sym^d(X_2)$.

Rational correspondences induce pushforward and pullback maps:
\begin{Def}\label{Def:Cycle}
  Let $\bar{\Gamma}$ be a smooth projective compactification of
  $\Gamma$ such that $\Gamma$ is dense in $\bar{\Gamma}$ and $\pi_1$
  and $\pi_2$ extend to maps $\bar{\pi_1}$ and $\bar{\pi_2}$ defined
  on $\bar{\Gamma}$. The cycle
  $(\bar{\pi_1}\times\bar{\pi_2})_*([\bar{\Gamma}])\in H_{2\dim
    X_1}(X_1\times X_2)$
  is independent of the choice of compactification $\bar{\Gamma}$, so
  we denote this cycle by $[\Gamma]$.
Set
\begin{align*}
  [\Gamma]_*:&=(\bar{\pi_2})_*\circ\bar{\pi_1}^*:H_c(X_1)\to H_c(X_2)
\end{align*}
and
\begin{align*}
  [\Gamma]^*:&=(\bar{\pi_1})_*\circ\bar{\pi_2}^*:H^c(X_2)\to H^c(X_1).
\end{align*}
\end{Def}
$[\Gamma]_*$ and $[\Gamma]^*$ are well-defined and depend only on
$[\Gamma]$. In fact, if $\pr_1$ and $\pr_2$ are the projections from
$X_1\times X_2$ to $X_1$ and $X_2$ respectively, we have for
$\mathfrak{a}\in H_c(X_1)$ and $\mathfrak{b}\in H^c(X_2)$
\begin{align*}
  [\Gamma]_*(\mathfrak{a})=(\pr_2)_*([\Gamma]\smile\pr_1^*(\mathfrak{a}))
\end{align*}
and
\begin{align*}
  [\Gamma]^*(\mathfrak{b})=(\pr_1)_*([\Gamma]\smile\pr_2^*(\mathfrak{b})),
\end{align*}
where $\smile$ denotes cup product in $H^*(X_1\times X_2).$ The
cohomology group $H^c(X_j)$ is dual to the homology group
$H_c(X_j)$. By the projection formula, $[\Gamma]_*$ and $[\Gamma]^*$
are dual maps.

Suppose $(\Gamma,\pi_1,\pi_2):X_1\ratt X_2$ and
$(\Gamma',\pi_2',\pi_3'):X_2\ratt X_3$ are rational correspondences
such that the image under $\pi_2$ of every irreducible component of
$\Gamma$ intersects the domain of definition of the multivalued
function $\pi_3'\circ(\pi_2')^{-1}.$ The composite
$\Gamma'\circ\Gamma$ is a rational correspondence from $X_1$ to $X_3$
defined as follows.

Pick dense open sets $U_1\subseteq X_1$ and $U_2\subseteq X_2$ such that
$\pi_2(\pi_1^{-1}(U_1))\subseteq U_2,$ and
$\pi_1|_{\pi_1^{-1}(U_1)}$ and
$\pi_2'|_{(\pi_2')^{-1}(U_2)}$ are both covering maps. Set
$$\Gamma'\circ
\Gamma:=\pi_1^{-1}(U_1)\thickspace\thickspace{_{\pi_2}\times_{\pi_2'}}\thickspace\thickspace(\pi_2')^{-1}(U_2),$$
together with its given maps to $X_1$ and $X_3$.

Although this definition of $\Gamma'\circ\Gamma$ depends on the choice
of open sets $U_1$ and $U_2,$ the cycle class $[\Gamma'\circ\Gamma]$
is well-defined; in fact, if $[\Gamma_1]=[\Gamma_2]$ and
$[\Gamma_1']=[\Gamma_2'],$ then
$[\Gamma_1'\circ\Gamma_1]=[\Gamma_2'\circ\Gamma_2]$
(\cite{DinhSibony2008}). It will be convenient to work with a concrete
representative $\Gamma'\circ\Gamma$ in its cycle class. However, composition of
rational correspondences is only defined up to equivalence of cycle
class.

Unfortunately, it is not necessarily true that
$[\Gamma'\circ\Gamma]_*=[\Gamma']_*\circ[\Gamma]_*$ or that
$[\Gamma'\circ\Gamma]^*=[\Gamma]^*\circ[\Gamma']^*$. (See Section
\ref{sec:Cremona} for an example.) Suppose
$$[\Gamma'\circ\Gamma]_*=[\Gamma']_*\circ[\Gamma]_*:H_c(X_1)\to
H_c(X_3).$$ Then we say $\Gamma'$ and $\Gamma$ are
\newword{$c$-homologically composable}. In this case, by duality of
pushforward and pullback, $\Gamma'$ and $\Gamma$ are also
$c$-cohomologically composable:
$$[\Gamma'\circ\Gamma]^*=[\Gamma]^*\circ[\Gamma']^*:H^c(X_3)\to
H^c(X_1).$$

\begin{rem}
  There is a theory of \newword{correspondences}, as distinct from
  rational correspondences. A correspondence from $X_1$ to $X_2$ is a
  cycle class in $X_1\times X_2.$ Correspondences also induce maps on
  (co)homology groups, but these maps are functorial under
  composition (\cite{Fulton1998}).
\end{rem}

\subsection{Any rational map is a rational correspondence}
Let $g:X_1\rat X_2$ be a rational map. Then
$\Gr(g):=\overline{\{(x,g(x))\}}\subseteq X_1\times X_2$ together with
its projections $\pr_1$ and $\pr_2$ to $X_1$ and $X_2$ respectively,
is a rational correspondence from $X_1$ to $X_2.$ The pushforward
$g_*$ and the pullback $g^*$ by $g$ have been independently defined in
the literature to be $[\Gr(g)]_*$ and $[\Gr(g)]^*$, respectively
(\cite{Roeder2013}). If $g:X_1\rat X_2$ and $g':X_2\rat X_3$ are
rational maps such that the image of $g$ intersects the domain of
definition of $g',$ then the composite $g'\circ g$ is a rational map
$X_1\rat X_3$, and $[\Gr(g'\circ g)]=[\Gr(g')\circ\Gr(g)].$ We may
thus identify rational maps with the rational correspondences given by
their graphs.

If $(\Gamma,\pi_1,\pi_2):X_1\ratt X_2$ is a rational correspondence
with $\pi_1$ generically one-to-one, then $[\Gamma]$ is the
rational correspondence given by the rational map
$\pi_2\circ\pi_1^{-1}.$

\subsection{Dynamical degrees}\label{sec:DynamicalDegrees}
We refer the reader to \cite{Roeder2013} or \cite{Bedford2011} for
more extended discussions of dynamical degrees of rational maps.
Let $(\Gamma,\pi_1,\pi_2):X\ratt X$ be a rational self-correspondence
such that the restriction of $\pi_2$ to every irreducible component of
$\Gamma$ is dominant.
\begin{Def}
  In this case we say $\Gamma$ is a \newword{dominant} rational
    self-correspondence.
\end{Def}
Set $\Gamma^n:=\Gamma\circ\cdots\circ\Gamma$ ($n$
times). Each $[\Gamma^n]^*$ acts on $H^{2k}(X),$ preserving
$H^{k,k}(X)$. 
\begin{Def}
  Pick a norm $\norm{\cdot}$ on $H^{k,k}(X).$ The $k$th dynamical
  degree $\Theta_k$ of $\Gamma$ is
  $\lim_{n\to\infty}\norm{[\Gamma^n]^*}^{1/n}.$
\end{Def}
This limit exists (\cite{DinhSibony2005}), and is independent of the
choice of norm (\cite{Guedj2010}). In \cite{DinhSibony2005} and
\cite{DinhSibony2008}, Dinh and Sibony show that the topological
entropy of a rational map or rational correspondence is bounded from
above by the logarithm of its largest dynamical degree.

Suppose $H^{k,k}(X)=H^{2k}(X).$ Then, since pullback on $H^{2k}(X)$
is dual to pushforward on $H_{2k}(X),$ we can rewrite
\begin{align*}
  (\mbox{$k$th dynamical degree of
  $\Gamma$})=\lim_{n\to\infty}\norm{[\Gamma]_*:H_{2k}(X)\to H_{2k}(X)}^{1/n}.
\end{align*}

If $[\Gamma^n]^*=([\Gamma]^*)^n$ on $H^{k,k}(X)$ for all $n$, $\Gamma$
is called \newword{$k$-stable}. A rational correspondence that is
$k$-stable for all $k$ is called \newword{algebraically stable}.

For $k$-stable $\Gamma$, we can rewrite the dynamical degree
$\Theta_k$ as $\lim_{n\to\infty}\norm{([\Gamma]^*)^n}^{1/n},$ which is
the absolute value of the dominant eigenvalue of $[\Gamma]^*$. The
$k$th dynamical degree is hard to compute for rational maps or
correspondences that are not $k$-stable, except in a few examples.

\subsection{Example: The Cremona involution on $\P^2$}\label{sec:Cremona}
Let $g:\P^2\rat\P^2$ be the rational self-map given in coordinates by
\begin{align*}
  [\mathbf{x}:\mathbf{y}:\mathbf{z}]\mapsto[\mathbf{y}\mathbf{z}:\mathbf{x}\mathbf{z}:\mathbf{x}\mathbf{y}].
\end{align*}
The map $g$ is undefined at the coordinate points $[1:0:0],$
$[0:1:0],$ and $[0:0:1].$ Also, $g^2$ is the identity where defined
--- the complement of the coordinate lines. Let $\mathfrak{l}\in
H^{1,1}(\P^2)$ be the class of a line. Since $g$ is given by degree
two polynomials in the coordinates, it is easy to check that a general
line pulls back to a conic. So $g^*\mathfrak{l}=2\mathfrak{l}$ and
$g^*$ acts on $H^{1,1}(\P^2)$ via multiplication by 2. On the other
hand, $g^2=\Id_{\P^2}$, so $(g^2)^*$ acts by the identity on
$H^{1,1}(\P^2).$ In particular $(g^2)^*\ne(g^*)^2.$

Although $g$ is not $1$-stable, its dynamical degree $\Theta_1$ is
easily computable. For $n$ odd, $g^n=g$ on $\P^2$, and $(g^n)^*=g^*$
is multiplication by 2 on $H^{1,1}(\P^2).$ For $n$ even,
$g^n=\Id_{\P^2}$ and $(g^n)^*$ is the identity on $H^{1,1}(\P^2).$ For
any norm $\norm{\cdot}$, the sequence $\norm{(g^n)^*}^{1/n}$ goes to 1
as $n$ goes to $\infty,$ so $\Theta_1=1.$

We can understand the lack of $1$-stability of $g$ by
examining its graph $\Gr(g)\subseteq\P^2\times\P^2.$ Let the
coordinates on the first $\P^2$ factor be
$\mathbf{x}_1,\mathbf{y}_1,\mathbf{z}_1,$ and the coordinates on the
second $\P^2$ factor be $\mathbf{x}_2,\mathbf{y}_2,\mathbf{z}_2.$
Denote by $\pi_1$ and $\pi_2$ the projections onto the first and
second factors, respectively. Then $\Gr(g)$ is given by the equations
  \begin{align*}
    \mathbf{x}_1\mathbf{x}_2=\mathbf{y}_1\mathbf{y}_2=\mathbf{z}_1\mathbf{z}_2.
  \end{align*}
  Over the open set $U\subseteq\P^2$ where all coordinates are
  nonzero, $\Gr(g)$ has
  equations $$\mathbf{x}_1=\frac{1}{\mathbf{x}_2},\quad\mathbf{y}_1=\frac{1}{\mathbf{y}_2},\quad\mathbf{z}_1=\frac{1}{\mathbf{z}_2},$$
  so $\pi_1$ and $\pi_2$ are equal. The fibered product
  $V=\Gr(g)\thickspace{_{\pi_2}\times_{\pi_1}}\thickspace\Gr(g)$
  embeds naturally in $\P^2\times\P^2$ and has four irreducible
  components: 
  \begin{align*}
    V_{\mathrm{diag}}:&=\{[\mathbf{x}_1:\mathbf{y}_1:\mathbf{z}_1]=[\mathbf{x}_2:\mathbf{y}_2:\mathbf{z}_2]\}\\
    V_{\mathbf{x}}:&=\{\mathbf{x}_1=0\}\times\{\mathbf{x}_2=0\}\\
    V_{\mathbf{y}}:&=\{\mathbf{y}_1=0\}\times\{\mathbf{y}_2=0\}\\
    V_{\mathbf{z}}:&=\{\mathbf{z}_1=0\}\times\{\mathbf{z}_2=0\}.
  \end{align*}
  None of $V_\mathbf{x}$, $V_\mathbf{y},$ or $V_\mathbf{z}$ maps
  dominantly onto either $\P^2$ factor, so $V$ does not define a
  rational correspondence $\P^2\ratt\P^2.$ However, it induces the map
  $\mathfrak{l}\mapsto4\mathfrak{l}$ on $H^{1,1}(\P^2)$, which is the
  same as $(g^*)^2$. On the other hand, the graph of $g^2$ is
  $V_{\diag}$, just one of the four irreducible components of $V$.

  \subsection{Birationally conjugate rational correspondences}
  Let $(\Gamma,\pi_1,\pi_2):X\ratt X$ be a dominant rational
  self-correspondence, and let $\rho:X\rat X'$ be a birational
  equivalence. Then we obtain a dominant rational self-correspondence
  on $X'$ through conjugation by $\rho,$ as follows. Let $U$ be the
  domain of definition of $\rho$. Set
  $\Gamma'=\pi_1^{-1}(U)\cap\pi_2^{-1}(U).$ Then
  \begin{align*}
    (\Gamma',\rho\circ\pi_1,\rho\circ\pi_2):X'\ratt X'
  \end{align*}
  is a dominant rational self-correspondence.
  \begin{thm}[\cite{DinhSibony2005,DinhSibony2008,Truong2015}]\label{thm:BirationalInvariance}
    The dynamical degrees of $\Gamma$ and $\Gamma'$ are equal.
  \end{thm}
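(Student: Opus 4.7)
The plan is to show that the sequences $\{\norm{(\Gamma^n)^*}\}_n$ and $\{\norm{(\Gamma'^n)^*}\}_n$ have the same exponential growth rate by bounding each in terms of the other up to a multiplicative constant that does not depend on $n$.

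First I would pass to a common smooth projective resolution of $\rho$: by Hironaka, there exists a smooth projective variety $Y$ with birational morphisms $p:Y\to X$ and $p':Y\to X'$ such that $p'=\rho\circ p$ wherever both sides are defined. For any birational morphism between smooth projective varieties, the projection formula gives $p_*p^*=\id$ on cohomology, so $p^*:H^*(X)\hookrightarrow H^*(Y)$ is a split injection with left inverse $p_*$, and similarly for $p'$. On cycle classes supported away from the $p$- and $p'$-exceptional loci, the two embeddings $p^*$ and $(p')^*$ agree after the identification by $\rho$. Lifting $\Gamma$ and $\Gamma'$ to rational self-correspondences $\tilde{\Gamma}$ and $\tilde{\Gamma}'$ on $Y$ via strict transforms, the two lifts coincide over the locus where $\rho$ is an isomorphism, so their cycle classes in $H_{2\dim Y}(Y\times Y)$ differ only by a fixed contribution supported on the exceptional loci, independent of $n$.

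The heart of the argument is a comparison inequality: there exists a constant $C>0$, depending only on $\rho$ (equivalently on $p$ and $p'$), such that for all $n\ge 1$,
\begin{align*}
  C^{-1}\norm{(\Gamma^n)^*}_{H^{k,k}(X)}\le\norm{(\Gamma'^n)^*}_{H^{k,k}(X')}\le C\norm{(\Gamma^n)^*}_{H^{k,k}(X)}.
\end{align*}
Taking $n$-th roots and sending $n\to\infty$ then yields equality of dynamical degrees, and the conclusion follows since by \cite{Guedj2010} the limit is independent of the norm.

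The main obstacle is ensuring that the constant $C$ is uniform in $n$. A naive approach — writing $(\Gamma')^n$ as the cycle-level composition $\rho\circ\Gamma^n\circ\rho^{-1}$ on $Y$ and attempting to bound an ``error term'' from the exceptional loci — fails because pullback does not commute with composition for rational correspondences, precisely the phenomenon illustrated by the Cremona example in Section \ref{sec:Cremona}: iterated composition can create new contributions from the exceptional locus at each stage, and a priori these contributions could grow with $n$. The resolution, as carried out by Dinh--Sibony \cite{DinhSibony2005,DinhSibony2008} and Truong \cite{Truong2015}, is to exploit positivity: both pullback and pushforward of correspondences preserve the pseudo-effective cone (equivalently, the cone of positive closed currents), and norms restricted to this cone are comparable under birational morphisms by constants that depend only on the morphism, not on the class. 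Applying this comparison to the sequence of iterated pullbacks, rather than term-by-term, yields the uniform bound in $n$ and closes the argument.
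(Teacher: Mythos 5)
The paper does not actually give a proof of Theorem~\ref{thm:BirationalInvariance}: the bracketed citations in the statement, together with the remark immediately following, make clear that the result is quoted from \cite{DinhSibony2005,DinhSibony2008,Truong2015}, and that a complete argument (obtained by modifying Truong's proofs) appears only in the appendix to \cite{Ramadas2016}, not in this paper. There is therefore no proof here to compare your sketch against line by line.

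With that caveat, your outline does correctly locate the real difficulty and the flavor of the fix. You are right that the naive approach --- resolve $\rho$ to a common model $Y$, write $(\Gamma')^n$ as a conjugate of $\Gamma^n$, and try to bound an ``error'' from the exceptional loci --- fails precisely because pullback is not functorial under composition of rational correspondences (Section~\ref{sec:Cremona}), so contributions from the exceptional locus can a priori accumulate with $n$. And you are right that the solution in the references is a positivity argument. But your sketch leaves that argument entirely as a black box at exactly the step where all the work lies, and the positivity claim as you state it is not accurate. Pullback and pushforward by a general dominant rational correspondence do \emph{not} obviously preserve the algebraic pseudo-effective cone in $H^{k,k}$ for $1<k<\dim X-1$: the paper's Remark~\ref{rem:EffectiveCone} establishes cone-preservation for Hurwitz correspondences only because the target map $\bar{\pi_1}$ of the admissible-covers compactification is \emph{flat}, which is special to that setting and not available for the general $\Gamma$ in this theorem. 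What Dinh--Sibony and Truong actually use is the calculus of positive closed $(k,k)$-currents together with a regularization theorem; the $n$-uniform comparison constant comes from testing $(\Gamma^n)^*(\omega^k)$ against a fixed power of a K\"ahler class and transporting that intersection number across the resolution, not from a cone comparison in cohomology. Your intermediate assertion that the lifts of $\Gamma$ and $\Gamma'$ to $Y$ ``differ only by a fixed contribution supported on the exceptional loci, independent of $n$'' is also misleading: it is $\Gamma$ itself whose two lifts agree off the exceptional locus, but the iterates $\Gamma^n$ and their strict transforms to $Y$ become increasingly complicated, and controlling them uniformly is precisely the content of the theorem. In short, your proposal is a reasonable road map but the gap between it and an actual proof is the theorem itself, which is why the paper defers to the references and to the companion paper's appendix.
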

  \begin{rem}
    Theorem \ref{thm:BirationalInvariance} as stated does not appear in
    the references quoted above. The proofs in \cite{Truong2015} were
    modified in the Appendix to \cite{Ramadas2016} to give a complete
    proof.
  \end{rem}
  Thus we can study the dynamical degrees of $\Gamma:X\ratt X$ via the
  action of $\Gamma$ on the birational model $X'.$ The next two lemmas
  allow us to compare a rational correspondence on different
  birational models.
  \begin{lem}\label{lem:DynamicalDegreeInQuotient}
    Let $X$ and $X'$ be smooth projective varieties, with
    $H^{2k}(X)=H^{k,k}(X)$ and $H^{2k}(X')=H^{k,k}(X')$ for some
    $k$. Suppose $\Gamma:X\ratt X$ is a $k$-stable rational
    self-correspondence, $\Lambda\subseteq H_{2k}(X)$ is a subspace
    with $[\Gamma]_*(\Lambda)\subseteq\Lambda,$ and $X'$ admits a
    birational morphism $\rho:X\to X'$ such that
    $\Lambda\subseteq\ker(\rho_*)$. Set $\Omega=H_{2k}(X)/\Lambda.$
    Then the $k$th dynamical degree of $\Gamma$ is the absolute value
    of the dominant eigenvalue of the induced map
    $[\Gamma]_*:\Omega\to\Omega.$
  \end{lem}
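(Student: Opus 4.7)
The plan is to combine $k$-stability of $\Gamma$ with birational invariance of dynamical degrees (Theorem \ref{thm:BirationalInvariance}) to force the growth governed by $\Theta_k(\Gamma)$ to be visible on $\Omega$. The Hodge hypothesis and $k$-stability, together with Gelfand's formula, identify $\Theta_k(\Gamma)$ with the spectral radius of $[\Gamma]_* \colon H_{2k}(X) \to H_{2k}(X)$. Since $\Lambda$ is $[\Gamma]_*$-invariant, the induced quotient operator on $\Omega$ is well-defined, and its spectral radius is automatically bounded above by that on $H_{2k}(X)$. The task is therefore the reverse inequality.

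I would obtain it by passing to the birational conjugate $\Gamma'$ on $X'$. Since $\rho$ is a morphism, the underlying scheme of $\Gamma'$ coincides with that of $\Gamma$, with projections $\rho\pi_1, \rho\pi_2$, so a single smooth projective compactification $\bar\Gamma$ serves for both. Definition \ref{Def:Cycle} then yields $[\Gamma']_* = \rho_* \circ [\Gamma]_* \circ \rho^*$, and applying the same identity to $\Gamma^n$, together with $[\Gamma^n]_* = [\Gamma]_*^n$ ($k$-stability) and the compatibility of conjugation with composition at the level of cycle classes, gives
$$[(\Gamma')^n]_* \;=\; \rho_* \circ [\Gamma]_*^n \circ \rho^* \colon H_{2k}(X') \to H_{2k}(X').$$

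The key step is to factor $\rho_*$ through $\Omega$. Because $\Lambda \subseteq \ker(\rho_*)$, there is a factorization $\rho_* = \bar\rho_* \circ q$, where $q \colon H_{2k}(X) \twoheadrightarrow \Omega$ is the quotient map and $\bar\rho_* \colon \Omega \to H_{2k}(X')$ is induced. Since $[\Gamma]_*$ descends to a quotient operator $\overline{[\Gamma]_*}$ on $\Omega$, one has $q \circ [\Gamma]_*^n = \overline{[\Gamma]_*}^n \circ q$, hence
$$[(\Gamma')^n]_* \;=\; \bar\rho_* \circ \overline{[\Gamma]_*}^n \circ (q \circ \rho^*).$$
Submultiplicativity of operator norms gives $\|[(\Gamma')^n]_*\|^{1/n} \le C^{1/n}\,\|\overline{[\Gamma]_*}^n\|^{1/n}$ for a constant $C = \|\bar\rho_*\| \cdot \|q \circ \rho^*\|$ independent of $n$. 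Letting $n \to \infty$ and invoking Theorem \ref{thm:BirationalInvariance} together with Gelfand's formula on $\Omega$ gives
$$\Theta_k(\Gamma) \;=\; \Theta_k(\Gamma') \;\le\; \text{spectral radius of } \overline{[\Gamma]_*} \text{ on } \Omega,$$
which, combined with the automatic reverse inequality, completes the proof.

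The step I expect to require the most care is the cycle-class identity $[(\Gamma')^n] = [(\Gamma^n)']$ underlying the closed-form expression for $[(\Gamma')^n]_*$; this rests on the cycle-class compatibility of composition with conjugation discussed in Section \ref{sec:RationalCorrespondences}. It is worth noting that no $k$-stability of $\Gamma'$ on $X'$ is required, because the explicit formula for $[(\Gamma')^n]_*$ sidesteps the potential failure of $k$-stability on the birational model.
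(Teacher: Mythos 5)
Your proposal is correct and follows essentially the same route as the paper: both prove the harder inequality by pushing the $k$-stable iterates of $[\Gamma]_*$ through the factorization $\rho_* = \bar\rho_*\circ q$ into the quotient $\Omega$, then apply submultiplicativity of the operator norm and the birational invariance of dynamical degrees on $X'$, while the reverse inequality is the easy observation that the spectral radius of the induced quotient operator is bounded by the spectral radius of $[\Gamma]_*$ on $H_{2k}(X)$. Your explicit flag of the cycle-class identity $[(\Gamma')^n]=[(\Gamma^n)']$ is a point the paper leaves implicit, but your treatment is equivalent in substance.
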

  \begin{proof}
    Dynamical degrees are birational invariants (Theorem
    \ref{thm:BirationalInvariance}), so the $k$th dynamical degree of
    $\Gamma$ on $X$ is equal to its $k$th dynamical degree on
    $X'$. Denote by $[\Gamma^n]^{X}_*$ the pushforward induced by the
    $n$th iterate of $\Gamma$ on $H_{2k}(X),$ by
    $[\Gamma^n]^{\Omega}_*$ the induced map on $\Omega,$ and by
    $[\Gamma^n]^{X'}_*$ the pushforward on $H_{2k}(X').$ By the
    $k$-stability of $\Gamma$ on $X,$ we have
  \begin{align*}
    [\Gamma^n]^{X}_*&=([\Gamma]^{X}_*)^n\\
    [\Gamma^n]^{\Omega}_*&=([\Gamma]^{\Omega}_*)^n.
  \end{align*}
  Denote by $\pr$ the map from $H_{2k}(X)$ to $\Omega.$ Pick norms on
  $H_{2k}(X)$, $\Omega,$ and $H_{2k}(X').$ This induces norms on maps
  among these vector spaces. We abuse notation by using $\norm{\cdot}$
  to denote all of these norms. Since $\Lambda\subseteq\ker(\rho_*),$
  there is a factorization:
  \begin{center}
    \begin{tikzpicture}
      \matrix(m)[matrix of math nodes,row sep=2em,column
      sep=3em,minimum width=2em] {
        H_{2k}(X)&&H_{2k}(X')\\
        &\Omega&\\
      }; \path[-stealth] (m-1-1) edge node [above] {$\rho_*$}
      (m-1-3); \path[-stealth] (m-1-1) edge node [below left]
      {$\pr$} (m-2-2); \path[-stealth] (m-2-2) edge node
      [below right] {$\bar{\rho_*}$} (m-1-3);
    \end{tikzpicture}
  \end{center}
  For $n>0,$ we have
  \begin{align*}
    [\Gamma^n]^{X'}_*&=\rho_*\circ[\Gamma^n]^{X}_*\circ\rho^*\\
    &=\bar{\rho_*}\circ\pr\circ[\Gamma^n]^{X}_*\circ\rho^*\\
    &=\bar{\rho_*}\circ[\Gamma^n]^{\Omega}_*\circ\pr\circ\rho^*\\
    &=\bar{\rho_*}\circ([\Gamma]^{\Omega}_*)^n\circ\pr\circ\rho^*.
  \end{align*}
  Thus by submultiplicativity of the induced norms:
  \begin{align*}
    \norm{[\Gamma^n]^{X'}_*}&\le\norm{\bar{\rho_*}}\norm{\pr}\norm{\rho^*}\norm{([\Gamma]^{\Omega}_*)^n}.
  \end{align*}
  Taking $n$th roots and the limit as $n\to\infty$, we obtain:
  \begin{align*}
    \mbox{($k$th dynamical degree of $\Gamma$)}\le\abs{\mbox{dominant
        eigenvalue of $[\Gamma]^{\Omega}_*$}}.
  \end{align*}
On the other hand, since $\Gamma$ is $k$-stable on $X$,
\begin{align*}
  \mbox{($k$th dynamical degree of $\Gamma$)}&=\abs{\mbox{dominant
      eigenvalue of $[\Gamma]^{X}_*$}}\\
  &\ge\abs{\mbox{dominant
      eigenvalue of $[\Gamma]^{\Omega}_*$}}.\qedhere
\end{align*}
\end{proof}
\begin{lem}\label{lem:CriterionForComposability}
  Let $\Gamma':X_2\ratt X_3$ be a rational correspondence. For
  $j\in\{2,3\},$ let $X'_j$ be a smooth projective variety admitting a
  birational morphism $\rho_j$ from $X_j$. Suppose for fixed $k$,
  $[\Gamma']_*:H_{2k}(X_2)\to H_{2k}(X_3)$ takes $\ker((\rho_2)_*)$
  to $\ker((\rho_3)_*).$ Then
  \begin{enumerate}[(i)]
  \item The following diagram commutes:
    \begin{center}
      \begin{tikzpicture}
        \matrix(m)[matrix of math nodes,row sep=2em,column
        sep=3em,minimum width=2em] {
          H_{2k}(X_2)&H_{2k}(X_3)\\
          H_{2k}(X'_2)&H_{2k}(X'_3)\\
        }; \path[-stealth] (m-1-1) edge node [above] {$[\Gamma']_*$}
        (m-1-2); \path[-stealth] (m-1-1) edge node [left]
        {$(\rho_2)_*$} (m-2-1); \path[-stealth] (m-2-1) edge node
        [above] {$[\Gamma']_*$} (m-2-2); \path[-stealth] (m-1-2) edge
        node [right] {$(\rho_3)_*$} (m-2-2);
      \end{tikzpicture}
    \end{center}
    Thus $[\Gamma']_*:H_{2k}(X'_2)\to H_{2k}(X'_3)$ can be identified
    with the induced map
    $$[\Gamma']_*:H_{2k}(X_2)/\ker((\rho_2)_*)\to H_{2k}(X_3)/\ker((\rho_3)_*),$$\label{item1}
  \item If $[\Gamma']_*:H_{2k}(X_2)\to H_{2k}(X_3)$ takes effective
    classes to effective classes, then so does
    $[\Gamma']_*:H_{2k}(X'_2)\to H_{2k}(X'_3)$, and\label{item2}
  \item If $\Gamma:X_1\ratt X_2$ is $2k$-homologically composable
    with $\Gamma':X_2\ratt X_3$, then $\Gamma:X_1\ratt X'_2$ is
    $2k$-homologically composable with $\Gamma':X'_2\ratt
    X'_3$.\label{item3}
  \end{enumerate}
\end{lem}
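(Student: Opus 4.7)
The plan is to derive all three parts from a single key identity relating $[\Gamma']_*$ viewed as a map between the unprimed spaces to $[\Gamma']_*$ viewed as a map between the primed spaces. For part (i), I would first observe that $\Gamma'$ viewed as a correspondence $X'_2 \ratt X'_3$ is given by the same smooth compactification $\bar{\Gamma'}$ but with the projection maps post-composed with $\rho_2$ and $\rho_3$ respectively. Unwinding Definition \ref{Def:Cycle} using functoriality of pushforward and contravariance of pullback then yields the identity
\begin{align*}
[\Gamma']_*^{\,X'} \;=\; (\rho_3)_* \circ [\Gamma']_*^{\,X} \circ \rho_2^*,
\end{align*}
where I temporarily write $[\Gamma']_*^{\,X}$ and $[\Gamma']_*^{\,X'}$ for the pushforwards induced on the unprimed and primed spaces. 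To verify commutativity of the square, I would decompose an arbitrary $\alpha \in H_{2k}(X_2)$ as $\alpha = \rho_2^* (\rho_2)_* \alpha + \beta$, where $\beta := \alpha - \rho_2^*(\rho_2)_*\alpha$ automatically lies in $\ker((\rho_2)_*)$ because $(\rho_j)_* \circ \rho_j^* = \id$ for a birational morphism between smooth projective varieties. The kernel hypothesis then ensures $[\Gamma']_*^{\,X} \beta \in \ker((\rho_3)_*)$, so the $\beta$-contribution vanishes and the square commutes. The identification of $[\Gamma']_*^{\,X'}$ with the induced map on quotients is immediate once one notes that each $(\rho_j)_*$ is surjective with kernel precisely $\ker((\rho_j)_*)$.

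For part (ii), I would use that any effective class in $H_{2k}(X'_2)$ is a nonnegative $\R$-linear combination of classes $[Z]$ of irreducible $k$-dimensional subvarieties $Z \subseteq X'_2$, and each such class equals $(\rho_2)_*[\widetilde{Z}]$ for the strict transform $\widetilde{Z} \subseteq X_2$ (since $\rho_2|_{\widetilde{Z}} \colon \widetilde{Z} \to Z$ is birational of degree one). Then part (i) gives $[\Gamma']_*^{\,X'}[Z] = (\rho_3)_* [\Gamma']_*^{\,X} [\widetilde{Z}]$, which is effective because $[\Gamma']_*^{\,X}$ preserves effectivity by hypothesis and pushforward of an effective cycle under any morphism is effective.

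For part (iii), I would first establish analogs of the key identity: $[\Gamma]_*$ viewed as a map $H_{2k}(X_1) \to H_{2k}(X'_2)$ equals $(\rho_2)_* \circ [\Gamma]_*$ (where the latter $[\Gamma]_*$ lands in $H_{2k}(X_2)$), and likewise $[\Gamma' \circ \Gamma]_*$ landing in $H_{2k}(X'_3)$ equals $(\rho_3)_*$ of $[\Gamma' \circ \Gamma]_*$ landing in $H_{2k}(X_3)$; both follow by the same unwinding of Definition \ref{Def:Cycle} used for (i). Then $2k$-homological composability of $\Gamma \colon X_1 \ratt X'_2$ with $\Gamma' \colon X'_2 \ratt X'_3$ follows by the chase
\begin{align*}
[\Gamma']_*^{\,X'} \circ [\Gamma]_*^{\,X'_2} &= [\Gamma']_*^{\,X'} \circ (\rho_2)_* \circ [\Gamma]_*^{\,X_2} \\
&= (\rho_3)_* \circ [\Gamma']_*^{\,X} \circ [\Gamma]_*^{\,X_2} \\
&= (\rho_3)_* \circ [\Gamma' \circ \Gamma]_*^{\,X_3} \\
&= [\Gamma' \circ \Gamma]_*^{\,X'_3},
\end{align*}
invoking (i) in line two and the $2k$-composability hypothesis in line three. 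The main subtlety, which I expect to be the principal technical obstacle, is the cycle-theoretic identification of $\Gamma' \circ \Gamma$ composed through intermediate space $X'_2$ with $\Gamma' \circ \Gamma$ composed through $X_2$ post-composed with $\rho_3$; but since composition of rational correspondences is defined only up to cycle class equivalence and the two compositions agree on the dense open over which $\rho_2$ is an isomorphism, the induced pushforwards on $H_{2k}(X_1) \to H_{2k}(X'_3)$ coincide.
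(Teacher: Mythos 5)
Your proof takes essentially the same approach as the paper: all three parts hinge on the identity $[\Gamma']_*^{X'}=(\rho_3)_*\circ[\Gamma']_*^{X}\circ\rho_2^*$ obtained by unwinding Definition \ref{Def:Cycle}, combined with $(\rho_j)_*\circ\rho_j^*=\id$ and the kernel hypothesis, and your chase in (iii) is line-for-line the paper's. One small imprecision in (ii): when $Z$ lies entirely inside the locus over which $\rho_2$ fails to be an isomorphism, the strict transform is either undefined or need not map birationally onto $Z$; the correct statement, which the paper uses, is merely that for a surjective proper morphism there exists \emph{some} effective $k$-cycle $\tilde{\alpha}$ with $(\rho_2)_*(\tilde{\alpha})=\alpha$, and the rest of your argument then applies verbatim.
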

\begin{proof}

  Since we discuss the same rational correspondence on different
  spaces, we include a superscript in the notation for pushforward;
  e.g. we denote by $[\Gamma']_*^{X_2,X_3}$ the induced map from
  $H_{2k}(X_2)$ to $H_{2k}(X_3)$, and we denote by
  $[\Gamma']_*^{X'_2,X'_3}$ the induced map from $H_{2k}(X'_2)$ to
  $H_{2k}(X'_3)$.
  
  Since the morphism $\rho_2$ is birational, $(\rho_2)_*\circ\rho_2^*$
  is the identity on $H_{2k}(X'_2).$ Thus:
  \begin{align*}
    \im(\rho_2^*\circ(\rho_2)_*-\Id)\subseteq\ker((\rho_2)_*).
  \end{align*}
  By assumption $[\Gamma']^{X_2,X_3}_*$ sends $\ker((\rho_2)_*)$ to
  $\ker((\rho_3)_*).$ Thus
  \begin{align}
    [\Gamma']^{X'_2,X'_3}_*\circ(\rho_2)_*&=(\rho_3)_*\circ[\Gamma']^{X_2,X_3}_*\circ\rho_2^*\circ(\rho_2)_*\nonumber\\
                                            &=(\rho_3)_*\circ[\Gamma']^{X_2,X_3}_*\label{eqn:Two}.
  \end{align}
  This proves \eqref{item1}.

  For $\alpha$ an effective $k$-dimensional cycle on $X'_2$, there is
  an effective cycle $\tilde{\alpha}$ on $X_2$ satisfying
  $(\rho_2)_*(\tilde{\alpha})=\alpha$. Thus
  $\tilde{\alpha}-\rho_2^*(\alpha)\in\ker((\rho_2)_*).$ Again, since
  $[\Gamma']^{X_2,X_3}_*$ sends $\ker((\rho_2)_*)$ to
  $\ker((\rho_3)_*),$
  \begin{align*}
    [\Gamma']^{X'_2,X'_3}_*(\alpha)&=(\rho_3)_*\circ[\Gamma']^{X_2,X_3}_*\circ(\rho_2^*)(\alpha)\\
&=(\rho_3)_*\circ[\Gamma']^{X_2,X_3}_*(\tilde{\alpha}).
  \end{align*}
  By assumption, $[\Gamma']^{X_2,X_3}_*$ preserves
  effectiveness. The pushforward by the regular map $\rho_3$
  preserves effectiveness, so $[\Gamma']^{X'_2,X'_3}_*(\alpha)$ is
  effective, proving \eqref{item2}. Finally, for \eqref{item3},
  \begin{align*}
    [\Gamma']^{X'_2,X'_3}_*\circ[\Gamma]^{X_1,X'_2}_*&=(\rho_3)_*\circ[\Gamma']^{X_2,X_3}_*\circ\rho_2^*\circ(\rho_2)_*\circ[\Gamma]^{X_1,X_2}_*.\\
    &=(\rho_3)_*\circ[\Gamma']^{X_2,X_3}_*\circ[\Gamma]^{X_1,X_2}_*\quad\quad\quad\quad\mbox{by
      \eqref{eqn:Two}}\\
    &=(\rho_3)_*\circ[\Gamma'\circ\Gamma]^{X_1,X_3}_*\\
    &=[\Gamma'\circ\Gamma]^{X_1,X'_3}_*.\qedhere
  \end{align*}
\end{proof}

\section{$\M_{0,N}$ and Hurwitz correspondences}
The moduli space $\M_{0,N}$ parametrizes all ways of labeling $N$
distinct points on $\P^1$, up to projective change of coordinates.
\begin{Def}
  An \newword{$N$-marked smooth genus zero curve} is a curve $C$,
  isomorphic to $\P^1$, together with distinct labeled points
  $p_1,\ldots,p_N\in C$. For $\bP$ a finite set, an
  \newword{$\bP$-marked smooth genus zero curve} is a curve $C$,
  isomorphic to $\P^1$, together with an injective map $\bP\into C.$
\end{Def}

\begin{Def}
  Let $N\ge3.$ There is a smooth quasiprojective variety $\M_{0,N}$ of
  dimension $N-3$ parametrizing all $N$-marked smooth genus zero
  curves up to isomorphism. Likewise, for $\bP$ a finite set with
  $\abs{\bP}\ge3,$ there is a moduli space
  $\M_{0,\bP}\cong\M_{0,\abs{\bP}}$, parametrizing smooth genus zero
  $\bP$-marked curves up to isomorphism.
\end{Def}

\begin{Def}
  For $N\ge N'\ge3,$ there is a \newword{forgetful map}
  $\mu:\M_{0,N}\to\M_{0,N'}$ sending $(C,p_1,\ldots,p_N)$ to
  $(C,p_1,\ldots,p_{N'}).$ Similarly, for $\bP'\into\bP$ with
  $\abs{\bP'}\ge3,$ there is a forgetful map
  $\mu:\M_{0,\bP}\to\M_{0,\bP'}$.
\end{Def}

Hurwitz spaces are moduli spaces parametrizing finite maps with
prescribed ramification between smooth curves of prescribed genus.  In
this paper, we only deal with Hurwitz spaces of maps between genus
zero curves. See \cite{RomagnyWewers2006} for a summary and proofs of
facts quoted here.

\begin{Def}[\emph{Hurwitz space}, \cite{Ramadas2016}, Definition 2.2]\label{Def:HurwitzSpace}
  Fix discrete data:
  \begin{itemize}
  \item $\bA$ and $\bB$ finite sets with cardinality at least 3
    (marked points on source and target curves, respectively),
  \item $d$ a positive integer (degree),
  \item $F:\bA\to \bB$ a map,
  \item $\br:\bB\to\{\mbox{partitions of $d$}\}$ (branching), and
  \item $\rm:\bA\to\Z^{>0}$ (ramification),
  \end{itemize}
  such that
  \begin{itemize}
  \item (Condition 1, Riemann-Hurwitz constraint) $\sum_{b\in\bB}\left(d-\mbox{length of
        $\br(b)$}\right)=2d-2$, and
  \item (Condition 2) for all $b\in\bB,$ the multiset $(\rm(a))_{a\in
      F^{-1}(b)}$ is a submultiset of $\br(b)$.
  \end{itemize}  
  There exists a smooth quasiprojective variety
  $\cH=\cH(\bA,\bB,d,F,\br,\rm),$ a \newword{Hurwitz space},
  parametrizing morphisms $f:C\to D$ up to isomorphism, where
  \begin{itemize}
  \item $C$ and $D$ are $\bA$-marked and $\bB$-marked smooth connected
    genus zero curves, respectively,
  \item $f$ is degree $d$,
  \item for all $a\in\bA,$ $f(a)=F(a)$ (via the injections $\bA\into
    C$ and $\bB\into D$),
    \item for all $b\in\bB,$ the branching of $f$ over $b$ is given by
    the partition $\br(b)$, and
  \item for all $a\in\bA,$ the local degree of $f$ at $a$ is equal to
    $\rm(a)$.  
  \end{itemize}
\end{Def}
\begin{figure}
  \centering
\begin{align*}
    \begin{array}{c}
      \H=\left\{\raisebox{-50pt}{
      \begin{tikzpicture}[scale=0.4]
        \draw (0,0) node
        {$\bullet$}; \draw (0,0) node [below]
        {\raisebox{-12pt}{$b_1$}}; \node at (1.7,0)
        {$\bullet$};\draw (1.7,0) node [below]
        {\raisebox{-12pt}{$b_2$}}; \node at (4.3,0)
        {$\bullet$};\draw (4.3,0) node [below]
        {\raisebox{-12pt}{$b_3$}}; \node at (6,0)
        {$\bullet$};\draw (6,0) node [below]
        {\raisebox{-12pt}{$b_4$}}; \node at (0,4.2)
        {$\bullet$};\draw (0,4.2) node [left]
        {$a_1\thickspace$}; \node at (1.7,4.3)
        {$\bullet$};\draw (1.7,4.3) node [right]
        {$\thickspace
          a_2$}; \node at (4.3,5.7)
        {$\bullet$};\draw (4.2,5.6) node [right] {$\thinspace
          a_3$}; \node at (4.3,3.4)
        {$\bullet$};\draw (4.3,3.45) node [below]
        {\raisebox{-5pt}{$a_4$}}; \draw[very thick] (-4,0) -- (10,0);
        \draw[->] (3,2.8) -- (3,1) node [midway,left]
        {$f\thinspace$}; \node at (11,5)
        {$C$}; \node at (11,0)
        {$D$}; \draw[very thick] plot [smooth, tension=1.2]
        coordinates { (-4,3.2) (0,4.2) (-4,5.2) }; \draw[very thick]
        plot [smooth, tension=1] coordinates {(-4,7) (4,6) (2,4)
          (10,3)}; \draw[very thick] plot [smooth, tension=1.2]
        coordinates{ (10,6.8) (6,5.8) (10,4.8)}; \draw[very thick]
        (12,-2)--(12,6.5);
      \end{tikzpicture}
      }
      \begin{array}{l}
        \mbox{$f:C\to D$ degree 3}\\
        a_1\xrightarrow{2}{}b_1\\
        a_2\xrightarrow{2}{}b_2\\
        a_3\xrightarrow{2}{}b_3\\
        a_4\xrightarrow{1}{}b_3\\
        \mbox{$(1,2)$-branching over $b_4$}
      \end{array}
      \right\}
    \end{array}
  \end{align*}
  \captionsetup{singlelinecheck=off}
   \caption[H]{The Hurwitz space $\H=\H(\bA,\bB,d,F,\br,rm)$, where
    \begin{itemize}
    \item $\bA=\{a_1,a_2,a_3,a_4\},$
    \item $\bB=\{b_1,b_2,b_3,b_4\},$
    \item $d=3$,
    \item $F(a_1)=b_1,$ $F(a_2)=b_2,$ $F(a_3)=F(a_4)=b_3$,
    \item $\br(b_i)=(1,2)$ for $i=1,\ldots,4$, and
    \item $\rm(a_1)=\rm(a_2)=\rm(a_3)=2,$ $\rm(a_4)=1.$
    \end{itemize}
    \textcolor{white}{SpaceText}
  }
  \label{fig:HurwitzSpace}
\end{figure}
\begin{rem}
  One may construct $\H$ as follows
  (\cite{HarrisMumford1982}). Consider $\M_{0,\bA}\times\M_{0,\bB}$
  with its two universal curves $\mathcal{U}_{0,\bA}$ and
  $\mathcal{U}_{0,\bB}.$ Let $\Hilb$ be the relative Hilbert scheme of
  degree $d$ morphisms $\cU_{0,\bA}\to\cU_{0,\bB}.$ The conditions
  that $a\in\bA$ map to $F(a)$ with local degree $\rm(a)$ and that the
  branching over $b\in\bB$ be given by $\br(b)$ are locally
  closed. Thus $\H$ is a locally closed subvariety of $\Hilb.$
\end{rem}

The Hurwitz space $\cH$ admits a map $\pi_{\bA}$ to $\M_{0,\bA},$
sending $[f:C\to D]$ to the marked source curve $[C]$. It similarly
admits a map $\pi_{\bB}$ to $\M_{0,\bB}$, sending $[f:C\to D]$ to the
marked target curve $[D]$. The space $\cH$ may be empty; if not, the
``target curve'' map $\pi_{\bB}$ is a finite covering map and
$\pi_{\bA}\circ\pi_{\bB}^{-1}$ defines a multivalued map from
$\M_{0,\bA}$ to $\M_{0,\bB}$. If $X_{\bB}$ and $X_{\bA}$ are
projective compactifications of $\M_{0,\bB}$ and $\M_{0,\bA}$
respectively, $(\H,\pi_{\bB},\pi_{\bA}):X_{\bB}\ratt X_{\bA}$ is a
rational correspondence. We generalize this as follows.
\begin{Def}[\emph{Hurwitz correspondence}, \cite{Ramadas2016},
  Definition 2.19]
  Let $\bA'$ be any subset of $\bA$ with cardinality at least 3. There
  is a forgetful morphism $\mu:\M_{0,\bA}\to\M_{0,\bA'}$. Let $\Gamma$
  be a union of connected components of $\cH$. If $X_{\bA'}$ and
  $X_{\bB}$ are smooth projective compactifications of $\M_{0,\bA'}$
  and $\M_{0,\bB}$ respectively,
  then
  $$\left(\Gamma,\pi_{\bB},\mu\circ\pi_{\bA}\right):X_{\bB}\ratt
  X_{\bA'}$$
  is a rational correspondence. We call such a rational correspondence
  a \newword{Hurwitz correspondence}.
\end{Def}

\subsection{Connection to dynamics on $\P^1$ and the Thurston pullback
map}\label{sec:Teichmuller}
This section is purely for motivation. We summarize parts of the
results in \cite{Koch2013}. Let $S^2$ denote an oriented 2-sphere.
\begin{Def}
  Let $\mathbf{P}\subset S^2$ be a finite set. Define an equivalence
  relation on orientation-preserving homeomorphisms $S^2\to\P^1$ as
  follows: $\psi_1$ is equivalent to $\psi_2$ if there exists
  $\xi\in\Aut(\P^1)$ such that $\psi_1$ and $\xi\circ\psi_2$ agree on
  $\mathbf{P}$ and are isotopic relative to $\mathbf{P}$. The
  \newword{Teichm\"uller space} $\mathcal{T}(S^2,\mathbf{P})$ of
  $(S^2,\mathbf{P})$ is the set of equivalence classes of
  homeomorphisms.
\end{Def}
Teichm\"uller space has a natural structure as a noncompact
nonalgebraic complex manifold; it is isomorphic to a bounded domain in
$\C^{\abs{\mathbf{P}}-3}.$ Given an element
$[\psi]\in\mathcal{T}(S^2,\mathbf{P})$, the restriction
$\psi|_{\mathbf{P}}:\mathbf{P}\to\P^1$ defines an element of
$\M_{0,\mathbf{P}}.$ This gives rise to a map of complex manifolds
\begin{align*}
  \mathcal{T}(S^2,\mathbf{P})\xrightarrow{\cv}\M_{0,\mathbf{P}}.
\end{align*}
In fact, this is a covering map, realizing
$\mathcal{T}(S^2,\mathbf{P})$ as the universal cover of $\M_{0,\bP}.$
\begin{Def}
  An orientation-preserving branched covering $\phi$ from $S^2$ to
  itself is called \newword{postcritically finite} if the
  post-critical set $$\mathbf{P}:=\{\phi^n(x)|n>0,\mbox{$x$ a critical
    point of $\phi$}\}$$ is finite.
\end{Def}

We say $\phi$ is \newword{combinatorially equivalent} to an algebraic
morphism if there exist orientation-preserving homeomorphisms
$\psi_1,\psi_2:S^2\to\P^1$ such that
\begin{align*}
  \psi_2\circ\phi\circ\psi_1^{-1}:\P^1\to\P^1
\end{align*}
is an algebraic morphism, and $\psi_1$ and $\psi_2$ are isotopic
relative to the postcritical set $\mathbf{P}.$ W. Thurston gave a
topological characterization for $\phi$ to be combinatorially
equivalent to an algebraic morphism, in terms of curve systems on
$S^2\setminus\mathbf{P}.$ This characterization can also be stated in
terms of a self-map on $\mathcal{T}(S^2,\mathbf{P})$
(\cite{DouadyHubbard1993}). If
$\psi:(S^2,\mathbf{P})\to(\P^1,\psi(\mathbf{P}))$ is an
orientation-preserving homeomorphism then the induced complex
structure on $S^2\setminus\mathbf{P}$ can be pulled back via the
covering map $\phi|_{S^2\setminus\phi^{-1}(\mathbf{P})}$ to obtain
another complex structure on $S^2\setminus\phi^{-1}(\mathbf{P}).$ This
extends to a complex structure on all of $S^2,$ and yields another
homeomorphism $\psi':(S^2,\mathbf{P})\to(\P^1,\psi'(\mathbf{P}))$ such
that
\begin{align*}
  \psi\circ\phi\circ(\psi')^{-1}:(\P^1,\psi'(\mathbf{P}))\to(\P^1,\psi(\bP))
\end{align*}
is an algebraic morphism. The map
\begin{align*}
  \mathcal{T}(S^2,\mathbf{P})&\xrightarrow{\Thurst(\phi)}\mathcal{T}(S^2,\mathbf{P})\\
  [\psi]&\mapsto[\psi']
\end{align*}
is a well-defined holomorphic map (\cite{DouadyHubbard1993}) called the
\newword{Thurston pullback map}.

\medskip

This pullback map is a contraction in the hyperbolic metric on
$\mathcal{T}(S^2,\mathbf{P});$ it is a theorem of Thurston that $\phi$
is combinatorially equivalent to an algebraic map if and only if
$\Thurst(\phi)$ has a fixed point. The dynamics of $\Thurst(\phi)$ are
thus of interest.

The Thurston pullback map is holomorphic, but not algebraic. However,
Koch (\cite{Koch2013}) showed that it descends to a Hurwitz
correspondence on the algebraic variety $\M_{0,\mathbf{P}}$.

Given $\phi:(S^2,\mathbf{P})\to(S^2,\mathbf{P}),$ denote by $d$ the
topological degree of $\phi$. Define
\begin{align*}
  \br:\mathbf{P}&\to\{\mbox{partitions of $d$}\}\\
  p&\mapsto\mbox{branching of $\phi$ over $p$}
\end{align*}
and
\begin{align*}
  \rm:\mathbf{P}&\to\Z^{>0}\\
  p&\mapsto\mbox{local degree of $\phi$ at $p$}.
\end{align*}
The data
$(\mathbf{P},\mathbf{P},d,\phi|_{\mathbf{P}}:\mathbf{P}\to\mathbf{P},\br,\rm)$
satisfy Conditions 1 and 2 in Definition
\ref{Def:HurwitzSpace}. Denote by $\H$ the Hurwitz space
$\H(\mathbf{P},\mathbf{P},d,\phi|_{\mathbf{P}}:\mathbf{P}\to\mathbf{P},\br,\rm)$.
Given a homeomorphism
$\psi:(S^2,\mathbf{P})\to(\P^1,\psi(\mathbf{P})),$ there exists a
homeomorphism $\psi':(S^2,\bP)\to(\P^1,\psi'(\bP)),$ with $[\psi']=\Thurst(\phi)([\psi])$,
and such that
\begin{align*}
  \psi\circ\phi\circ(\psi')^{-1}:(\P^1,\psi'(\mathbf{P}))\to(\P^1,\psi(\mathbf{P}))
\end{align*}
is an algebraic morphism. This defines a point in $\H$. By
\cite{Koch2013}, we obtain a holomorphic covering map
\begin{align*}
  \mathcal{T}(S^2,\mathbf{P})&\to\H\\
  [\psi]&\mapsto[(\P^1,\psi'(\mathbf{P}))\xrightarrow{\psi\circ\phi\circ(\psi')^{-1}}(\P^1,\psi(\mathbf{P}))]
\end{align*}
whose image is a connected component $\Gamma$ of $\H$. We have
the commutative diagram:
\begin{center}
  \begin{tikzpicture}
    \matrix(m)[matrix of math nodes,row sep=3em,column sep=8em,minimum
    width=2em] {
      \mathcal{T}(S^2,\mathbf{P})&&\mathcal{T}(S^2,\mathbf{P})\\
      &\Gamma&\\
      \M_{0,\mathbf{P}}&&\M_{0,\mathbf{P}}\\
    };
    \path[-stealth] (m-1-1) edge node [above] {$\Thurst(\phi)$} (m-1-3);
    \path[-stealth] (m-1-1) edge (m-2-2);
    \path[-stealth] (m-1-1) edge node [left] {$\cv$} (m-3-1);
    \path[-stealth] (m-1-3) edge node [left] {$\cv$} (m-3-3);
    \path[-stealth] (m-2-2) edge node [above left]
    {$\pi_\mathbf{P}^{\mathrm{target}}$} (m-3-1);
    \path[-stealth] (m-2-2) edge node [above right] {$\pi_\mathbf{P}^{\mathrm{source}}$} (m-3-3);
  \end{tikzpicture}
\end{center}
Thus the Hurwitz correspondence $\Gamma$ is an algebraic ``shadow'' of
the nonalgebraic Thurston pullback map. Our study of the dynamics of
Hurwitz self-correspondences is motivated in part by this connection
to the dynamics of the Thurston pullback map.

\section{Compactifications of $\M_{0,N}$}
$\M_{0,N}$ parametrizes smooth curves of genus zero with $N$ distinct
marked points.  However, when $N>3$ there are 1-parameter families
$C(\mathbf{t})_{\mathbf{t}\ne0}$ of smooth curves with $N$ distinct
marked points such that as $\mathbf{t}$ goes to zero, there is no
limiting smooth curve where the marked points remain distinct. Thus
$\M_{0,N}$ cannot be compact. There are many compactifications of
$\M_{0,N}$ that are essentially based on describing what happens when
marked points collide.

$\Mbar_{0,N}$ is the most widely studied of these
compactifications. Here, the marked points are always distinct, but
the curve may be nodal. Moduli spaces of weighted stable curves are a
generalization of $\Mbar_{0,N}$ constructed by Hassett in
\cite{Hassett2003}. Here, marked points are assigned weights between 0
and 1, and a set of marked points may coincide as long as their total
weight is not more than 1.

\subsection{ $\Mbar_{0,N}$ and its combinatorial
  structure}\label{sec:StableCurves}
We refer the reader to \cite{KockVainsencher} for an extended
discussion.
\begin{Def}\label{Def:StableCurve}
  A \newword{stable $N$-marked genus zero curve} is a connected
  algebraic curve $C$ of arithmetic genus zero whose only
  singularities are simple nodes, together with $N$ distinct smooth
  marked points $p_1,\ldots,p_N$ on $C$, such that the set of
  automorphisms $C\to C$ that fix every marked point $p_i$ is finite.
\end{Def}
The irreducible components of such $C$ are all isomorphic to
$\P^1$. Points of $C$ that are either marked points or nodes are
called \newword{special points}. The \newword{stability} condition
that $(C,p_1,\ldots,p_N)$ has finitely many automorphisms implies that
it has no nontrivial automorphisms, and is equivalent to the condition
that every irreducible component of $C$ has at least three special
points.
\begin{thm}[Deligne-Mumford, Grothendieck, Knudsen]
  Let $N\ge3.$ There is a smooth projective variety $\Mbar_{0,N}$ of
  dimension $N-3$ that is a fine moduli space for stable $N$-marked
  genus zero curves. It contains $\M_{0,N}$ as a dense open subset.
\end{thm}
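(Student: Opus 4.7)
The plan is to induct on $N$. For the base case $N=3$, any three distinct points on $\PP^1$ can be sent to any other three distinct points by a unique element of $\Aut(\PP^1)\isom\PGL_2$, so there is, up to isomorphism, a unique $3$-marked smooth genus zero curve; it is automatically stable. Hence $\Mbar_{0,3}=\M_{0,3}$ is a single reduced point, matching the predicted dimension $3-3=0$.

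For the inductive step, assume $\Mbar_{0,N}$ has been constructed as a smooth projective fine moduli space of dimension $N-3$, equipped with a universal family $\pi\colon\cU_{0,N}\to\Mbar_{0,N}$ together with $N$ disjoint sections $\sigma_1,\ldots,\sigma_N$. Following Knudsen, I would build $\Mbar_{0,N+1}$ from $\cU_{0,N}$ by modifying it so that adjoining an $(N+1)$-st section never forces instability. Over the open locus where the prospective $(N+1)$-st point is smooth and disjoint from the $\sigma_i$, $\cU_{0,N}$ itself already represents the functor of stable $(N+1)$-marked curves. The issue arises along each section $\sigma_i$ and along the codimension-two locus of nodes of $\pi$; at these loci one performs a blow-up to insert a new $\PP^1$ bubble that carries both the old and the new marked point (or the node), attached to the original fiber at a single node. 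Since this bubble has exactly three special points, the modified fiber is stable, and the construction identifies $\Mbar_{0,N+1}$ with $\cU_{0,N}$ itself when $\cU_{0,N}$ is taken to have these families as its geometric fibers. The resulting variety is smooth, projective of dimension $(N-3)+1=N-2$, and admits a forgetful morphism $\mu\colon\Mbar_{0,N+1}\to\Mbar_{0,N}$ whose fibers are the stable curves themselves.

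The crucial step is verifying the universal property: that the variety just constructed represents the moduli functor of stable $(N+1)$-marked genus zero curves, in the sense that every flat family of such curves over a base $B$ is pulled back along a unique morphism $B\to\Mbar_{0,N+1}$. Uniqueness reduces to the statement that a stable marked curve has no nontrivial automorphisms, which is immediate from Definition \ref{Def:StableCurve}. Existence reduces to performing the above stabilization/contraction procedure in families, which requires a careful order-of-blowup argument where sections meet nodes. Smoothness of $\Mbar_{0,N+1}$ follows from a deformation-theoretic computation: the space of first-order deformations of a stable genus zero nodal marked curve is unobstructed of dimension $N-2$, obtained by deforming the smooth pointed components and smoothing the nodes independently. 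Density of $\M_{0,N+1}$ follows since each node can be smoothed by moving one parameter, and projectivity is inherited from $\cU_{0,N}$ via the blow-up construction.

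The hard part is the combinatorics of the blow-up procedure when a section $\sigma_i$ passes through a node of $\pi$, which can occur on the boundary of $\Mbar_{0,N}$; there the modification has to simultaneously resolve the node and introduce a bubble carrying $\sigma_i$ and the new section, and one must check that the order of blow-ups does not affect the result and that the total space remains flat over the base with fibers of the correct topological type. Once this verification is in place, the moduli property, smoothness, projectivity, dimension, and density of $\M_{0,N+1}$ all propagate from $N$ to $N+1$, completing the induction.
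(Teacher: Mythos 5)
The paper does not prove this statement; it is quoted as a foundational theorem, credited to Deligne--Mumford, Grothendieck, and Knudsen, with the reader referred to \cite{KockVainsencher} for an exposition. So there is no ``paper's own proof'' to compare against --- you have supplied a sketch where the paper supplies none.

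Your outline does follow the standard route (Knudsen's inductive construction), and the overall shape --- base case $\Mbar_{0,3}=\mathrm{pt}$, forgetful map $\mu\colon\Mbar_{0,N+1}\to\Mbar_{0,N}$, identification of $\Mbar_{0,N+1}$ with the universal curve $\cU_{0,N}$, stabilization to handle sections colliding with each other or with nodes --- is the right one. However, there is a recurring confusion in where the blow-ups actually happen. The identification $\Mbar_{0,N+1}\cong\cU_{0,N}$ is direct, with \emph{no} modification of $\cU_{0,N}$ required: a point $p$ of the fiber $C$ is sent to the stable model of $(C,p_1,\dots,p_N,p)$, and this is a bijection on geometric points and in fact an isomorphism of schemes. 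What requires blow-ups is the construction of the \emph{universal curve} $\cU_{0,N+1}$ over $\Mbar_{0,N+1}$: one starts from the fibered product $\cU_{0,N}\times_{\Mbar_{0,N}}\cU_{0,N}$ with its tautological $(N+1)$-st section given by the diagonal, and then performs Knudsen's stabilization (a sequence of blow-ups along the diagonal's intersections with the old sections and with the singular locus of $\pi$, followed by contraction of semistable components). Your phrasing conflates these two steps. You also leave the hardest parts explicitly as black boxes --- the representability of the functor, the order-independence of the blow-ups, and the deformation-theoretic smoothness argument --- so what you have written is a route map rather than a proof, which is appropriate only because the source paper itself states the theorem without proof.
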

For $\bP$ a finite set, we analogously define stable genus zero
$\bP$-marked curves, and their moduli space $\Mbar_{0,\bP}$
(isomorphic to $\Mbar_{0,\abs{\bP}}$). It contains $\M_{0,\bP}$ as a
dense open subset.

The \newword{boundary} $\Mbar_{0,N}\setminus\M_{0,N}$ is a simple
normal crossings divisor. Points on the boundary correspond to
reducible stable curves. The topological type of a stable curve is
captured by the combinatorial information of its dual tree. This
classification of stable curves by topological type gives a
stratification of $\Mbar_{0,N}$.
\begin{Def}\label{Def:DualTree}
  Let $(C,p_1,\ldots,p_N)$ be a stable genus zero curve. Its
  \newword{dual tree} $\sigma$ is the graph defined as follows. The
  vertices $v$ of $\sigma$ correspond to the irreducible components
  $C_v$ of $C$. Two vertices $v_1$ and $v_2$ are connected by an edge
  if and only if the components $C_{v_1}$ and $C_{v_2}$ meet at a
  node. For each marked point $p_i$ on $C_v,$ we attach a
  \newword{leg} $\ell_i$ to the vertex $v.$ Note that edges and legs
  are distinct from each other. The graph $\sigma$ is a tree because
  $C$ has arithmetic genus zero.
\end{Def}
For a vertex $v$ on $\sigma,$ set
$$\Delta_v=\{\mbox{Legs attached to
  $v$}\}\cup\{\mbox{edges incident to $v$}\}.$$
We refer to elements of $\Delta_v$ as \newword{flags} on $v$. We
define the \newword{valence} of $v$, denoted $\abs{v}$, to be the
cardinality of $\Delta_v.$ For $i\in\{1,\ldots,N\}$, we define
$\delta(v\to i)$ to be the unique flag in $\Delta_v$ that connects the
leg $\ell_i$ to $v$, i.e. is part of the unique non-repeating path in
$\sigma$ from $v$ to $\ell_i.$ If $\ell_i\in\Delta_v,$ then
$\delta(v\to i)=\ell_i;$ otherwise $\delta(v\to i)$ is an edge.  There
is a canonical injection $\Delta_v\into C_v$ whose image is the set of
special points of $C_v$. Thus $C_v$ is a $\Delta_v$-marked smooth
genus zero curve. We denote by $\mathcal{V}(\sigma)$ the set of
vertices of $\sigma$. We define the \newword{moduli dimension}
$\md(v)$ of $v\in\mathcal{V}(\sigma)$ to be $\abs{v}-3.$

\begin{Def}
  A \newword{stable $N$-marked tree} is a tree $\sigma$ with marked
  legs $\ell_1,\ldots,\ell_N$ such that every vertex has valence at
  least 3.
\end{Def}
For fixed $N$, there are finitely many isomorphism classes of stable
$N$-marked trees, and each of these arises as the dual tree of some
stable $N$-marked genus zero curve.
\begin{Def}\label{Def:BoundaryStratum}
  Given $\sigma$ a stable $N$-marked tree, the closure $S_\sigma$ of
  the set $\{[(C,p_1,\ldots,p_N)]:\mbox{$C$ has dual graph
    $\sigma$}\}$ is an irreducible subvariety of $\Mbar_{0,N}$. These
  special subvarieties are called \newword{boundary strata}.
\end{Def}
Boundary strata on $\Mbar_{0,N}$ are in bijection with isomorphism
classes of stable $N$-marked trees.

Boundary strata on $\Mbar_{0,N}$ decompose into products of
smaller-dimensional spaces of stable curves. Let $S_\sigma$ be a
boundary stratum in $\Mbar_{0,N}$. Given stable curves
$([C_v]\in\Mbar_{0,\Delta_v})_{v\in\cV(\sigma)}$, we can glue the
curves $C_v$ together to obtain a curve $C$ in $S_\sigma$ as
follows. Whenever there is an edge between $v_1$ and $v_2$, glue
$C_{v_1}$ to $C_{v_2}$ at the corresponding marked point of each
curve. This procedure defines a \newword{gluing morphism}
\begin{align*}
  \prod_{v\in\mathcal{V}(\sigma)}\Mbar_{0,\Delta_v}\cong S_\sigma\into\Mbar_{0,N}.
\end{align*}
So
\begin{align*}
  \dim S_\sigma=\sum_{v\in\mathcal{V}(\sigma)}(\abs{v}-3)=\sum_{v\in\mathcal{V}(\sigma)}\md(v).
\end{align*}
The codimension of $S_\sigma$ in $\Mbar_{0,N}$ equals the number of edges of
$\sigma.$

\medskip

\noindent\textbf{Forgetful maps.} For $N\ge N'\ge3,$
the forgetful map $\mu:\M_{0,N}\to\M_{0,N'}$ extends to
$\mu:\Mbar_{0,N}\to\Mbar_{0,N'}$. Let $(C,p_1,\ldots,p_N)$ be any
$N$-marked stable curve. The curve $(C,p_1,\ldots,p_{N'})$ obtained by
forgetting $p_{N'+1},\ldots,p_N$ may not be stable. However, we
obtain $(C',p_1,\ldots,p_{N'})$ a stable $N'$-marked curve by
\emph{stabilizing}, i.e. successively contracting components of $C$
with fewer than 3 special points. The map $\mu$ sends
$[(C,p_1,\ldots,p_N)]$ to $[(C',p_1,\ldots,p_{N'})].$

If $\sigma$ is the dual tree of $C$, we obtain the dual tree $\sigma'$
of $C'$ by deleting legs $\ell_{N'+1},\ldots,\ell_N$ and applying a
finite sequence of steps, also called stabilization. Each step is
either of the form:
\begin{itemize}
\item For a vertex $v$ of valence 1, delete $v$, together with its
  incident edge, or
\item For a vertex $v$ of valence 2 with edges $e_1$ and $e_2$
  connecting $v$ to vertices $v_1$ and $v_2$, respectively, delete $v$
  together with $e_1$ and $e_2,$ and connect $v_1$ to $v_2$ by an
  edge, or
\item For a vertex $v$ of valence 2 incident to an edge $e$ connecting
  $v$ to another vertex $v_1$, and also incident to a leg $\ell_i$,
  delete $v$ together with $e$, and attach $\ell_i$ to $v_1$.
\end{itemize}
The deletion of the vertex $v$ on $\sigma$ corresponds to the
contraction of $C_v$ on $C$.

\medskip

\noindent\textbf{Homology groups of $\Mbar_{0,N}$.} The homology group
$H_{2k}(\Mbar_{0,N})$ is isomorphic to the Chow group
$A_k(\Mbar_{0,N})$ and is generated by the classes of $k$-dimensional
boundary strata. $\Mbar_{0,N}$ has no odd-dimensional homology (Keel,
\cite{Keel1992}).

Additive relations among boundary strata are described in
\cite{KontsevichManin1994} by Kontsevich and Manin. Let $\sigma$ be
the dual tree of some $(k+1)$-dimensional boundary stratum, let $v$ be
a vertex on $\sigma$ with valence at least four, and let
$i_1,i_2,i_3,i_4\in\{1,\ldots,N\}$ be such that the flags $\delta(v\to
i_1),\ldots,\delta(v\to i_4)$ on $v$ are distinct. The data
$\sigma,v,i_1,\ldots,i_4$ determine a relation
$R(\sigma,v,i_1,\ldots,i_4)$ among $k$-dimensional boundary
strata. For every set partition $\Delta_v\setminus\{\delta(v\to
i_1),\ldots,\delta(v\to i_4)\}=\Delta_1\sqcup\Delta_2$, define a
stable tree $\sigma(i_1i_2\Delta_1|\Delta_2i_3i_4)$ as follows.
\begin{enumerate}
\item Insert an edge into $v$, splitting it into two vertices $v_1$
  and $v_2$.
\item Attach the flags in $\Delta_1\cup\{\delta(v\to i_1),\delta(v\to i_2)\}$ to $v_1$.
\item Attach the flags in $\Delta_2\cup\{\delta(v\to i_3),\delta(v\to i_4)\}$ to $v_2$.
\end{enumerate}
The tree $\sigma(i_1i_2\Delta_1|\Delta_2i_3i_4)$ has
one more edge than $\sigma$ and thus corresponds to a $k$-dimensional
boundary stratum $S(i_1i_2\Delta_1|\Delta_2i_3i_4)$. The tree
$\sigma(i_1i_3\Delta_1|\Delta_2i_2i_4)$ and stratum
$S(i_1i_3\Delta_1|\Delta_2i_2i_4)$ are defined analogously. Then
\begin{align}\label{eq:KontsevichManin}
  R(\sigma,v,i_1,\ldots,i_4):&=\sum_{(\Delta_1,\Delta_2)}[S(i_1i_2\Delta_1|\Delta_2i_3i_4)]-\sum_{(\Delta_1,\Delta_2)}[S(i_1i_3\Delta_1|\Delta_2i_2i_4)]\\
  &=0\in H_{2k}(\Mbar_{0,N})\mbox{\quad(resp. $A_k(\Mbar_{0,N})$)}\nonumber,
\end{align}
where the sum is over set partitions $\Delta_v\setminus\{\delta(v\to
i_1),\ldots,\delta(v\to i_4)\}=\Delta_1\cup\Delta_2$. These relations,
varying $\sigma,v,i_1,\ldots,i_4$, generate all additive relations among
$k$-dimensional boundary strata.

\subsection{Weighted stable curves}\label{sec:WeightedStableCurves}
\begin{Def}[Hassett, \cite{Hassett2003}]
  A \newword{weight datum} is a tuple
  $\be=(\epsilon_1,\ldots,\epsilon_N)\in(\Q\cap(0,1])^N$
  such that $\sum_{i=1}^N\epsilon_i>2.$
\end{Def}
\begin{Def}[Hassett, \cite{Hassett2003}]
  Let $\be$ be a weight datum. A \newword{genus zero
    $\be$-stable curve} is a connected algebraic curve $C$ of
  arithmetic genus zero, whose only singularities are simple nodes,
  together with smooth marked points $p_1,\ldots,p_N$, not necessarily
  distinct, such that
  \begin{enumerate}[(1)]
  \item If $p_{i_1}=\cdots=p_{i_s}$ then
    $\epsilon_{i_1}+\cdots+\epsilon_{i_s}\le1$, and
  \item For any irreducible component $C_v$,
    \begin{align*}
      \#\{\mbox{nodes on $C_v$}\}+\sum_{\text{$p_i$ on
          $C_v$}}\epsilon_i>2.
    \end{align*}
  \end{enumerate}
\end{Def}
\begin{Defthm}[Hassett, \cite{Hassett2003}]
  Given a weight datum $\be,$ there is a smooth projective
  variety $\Mbar_{0,N}(\be)$ of dimension $N-3$ that is a
  fine moduli space for $\be$-stable genus zero curves. It
  contains $\M_{0,N}$ as a dense open set. There is a
  \newword{reduction morphism}
  $\rho_{\be}:\Mbar_{0,N}\to\Mbar_{0,N}(\be)$ that
  respects the open inclusion of $\M_{0,N}$ into both spaces.
\end{Defthm}

\begin{Def}
  A \newword{boundary stratum} in $\Mbar_{0,N}(\be)$ is the image, under
  $\rho_{\be}$, of a boundary stratum in $\Mbar_{0,N}$.
\end{Def}
The homology group $H_{2k}(\Mbar_{0,N}(\be))$ is isomorphic to the
Chow group $A_k(\Mbar_{0,N}(\be))$ and is generated by the classes of
boundary strata (\cite{Ceyhan2009}).
\begin{rem}
  $\Mbar_{0,N}=\Mbar_{0,N}(\be)$ for $\be=(1,\ldots,1).$
\end{rem}

\begin{Def}
  Let $\be$ be a weight datum, and $\sigma$ be an $N$-marked stable
  tree. A vertex $v$ on $\sigma$ is called \newword{$\be$-stable} if
  \begin{align*}
    \sum_{\delta\in\Delta_v}\min\left\{1,\sum_{i|\delta=\delta(v\to
        i)}\epsilon_i\right\}>2.
  \end{align*}
\end{Def}
For $[C,p_1,\ldots,p_N]\in\Mbar_{0,N}$ with dual tree $\sigma$, an
$\be$-stable curve representing $\rho_{\be}([C])$ is obtained by
contracting to a point every irreducible component $C_v$ corresponding
to $v\in\cV(\sigma)$ that is not $\be$-stable. It follows that
$\sigma$ has at least one $\be$-stable vertex. Also, the image under
$\rho_{\be}$ of the boundary stratum $S_\sigma$ has dimension
\begin{align*}
  \sum_{\substack{v\in\cV(\sigma)\\\text{$v$ $\be$-stable}}}\md(v).
\end{align*}
Thus, we obtain:
\begin{lem}\label{lem:CriterionForKernel}
  Let $S_\sigma$ be a boundary stratum in $\Mbar_{0,N}$. The
  pushforward $(\rho_{\be})_*([S_\sigma])$ is nonzero if and only if
  every vertex $v\in\cV(\sigma)$ with positive moduli dimension is
  $\be$-stable.
\end{lem}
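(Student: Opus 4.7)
The plan is to reduce the statement to the standard fact that for a morphism $f\colon X\to Y$ of varieties and an irreducible closed subvariety $Z\subseteq X$, the pushforward of cycle classes satisfies $f_*[Z] = \deg(f|_Z)\cdot[f(Z)]$ when $\dim f(Z) = \dim Z$, and $f_*[Z] = 0$ otherwise. Since $S_\sigma$ is irreducible (Definition \ref{Def:BoundaryStratum}) and $\rho_{\be}$ is a morphism, I can apply this to $\rho_{\be}|_{S_\sigma}$.

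The main step is to compare the two dimensions. On one hand,
\begin{align*}
\dim S_\sigma = \sum_{v\in\cV(\sigma)} \md(v),
\end{align*}
by the product decomposition $S_\sigma \cong \prod_{v}\Mbar_{0,\Delta_v}$ recalled in Section \ref{sec:StableCurves}. On the other hand, the discussion immediately preceding the lemma gives
\begin{align*}
\dim \rho_{\be}(S_\sigma) = \sum_{\substack{v\in\cV(\sigma)\\\text{$v$ $\be$-stable}}}\md(v),
\end{align*}
because $\rho_{\be}$ acts on the generic curve of $S_\sigma$ by contracting exactly the components indexed by non-$\be$-stable vertices, leaving the moduli of the remaining components free. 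Since $\md(v)\ge 0$ for every vertex of a stable tree, these two sums agree if and only if every non-$\be$-stable vertex satisfies $\md(v)=0$, equivalently, every vertex $v$ with $\md(v)>0$ is $\be$-stable.

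To conclude, I observe that when the dimensions agree, $\rho_{\be}(S_\sigma)$ is an irreducible closed subvariety of the (smooth projective) variety $\Mbar_{0,N}(\be)$, so its class is nonzero in $A_*(\Mbar_{0,N}(\be)) \cong H_*(\Mbar_{0,N}(\be))$, and the degree $\deg(\rho_{\be}|_{S_\sigma})$ is a positive integer; hence $(\rho_{\be})_*[S_\sigma]\ne 0$. Conversely, if some vertex $v$ with $\md(v)>0$ is not $\be$-stable, the strict inequality $\dim \rho_{\be}(S_\sigma) < \dim S_\sigma$ forces $(\rho_{\be})_*[S_\sigma]=0$.

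There is no real obstacle; the only ingredient beyond bookkeeping is the dimension formula for $\rho_{\be}(S_\sigma)$, which has already been justified in the text via Hassett's description of the reduction morphism as contraction of non-$\be$-stable components.
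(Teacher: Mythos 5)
Your proof is correct and follows exactly the route the paper intends: the paper simply states the dimension formula $\dim\rho_{\be}(S_\sigma)=\sum_{v\ \be\text{-stable}}\md(v)$ and then writes ``Thus, we obtain'' the lemma, leaving the dimension comparison and the standard formula $f_*[Z]=\deg(f|_Z)\cdot[f(Z)]$ (zero when the image drops dimension) implicit. You have merely spelled out the bookkeeping the paper takes for granted.
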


\section{Admissible covers}\label{sec:AdmissibleCovers}
There is a widely used compactification of the Hurwitz space $\H$ by a
space $\Hbar$ of \newword{admissible covers}. Moduli spaces of
admissible covers were constructed in \cite{HarrisMumford1982} by
Harris and Mumford, and parametrize finite maps between possibly nodal
curves. In general, they are only coarse moduli spaces, with quotient
singularities. For technical ease, we introduce a class of Hurwitz
spaces whose admissible covers compactifications are fine moduli
spaces. We refer to Hurwitz spaces in this class as \emph{fully
  marked}.

\subsection{Fully marked Hurwitz spaces}\label{sec:FullyMarked}
\begin{Def}\label{Def:FullyMarked}
  Given $(\bA,\bB,d,F,\br,\rm)$ as in Definition
  \ref{Def:HurwitzSpace} with Condition 2 strengthened to:
  \begin{itemize}
  \item (Condition $2'$) For all $b\in\bB,$ the multiset
    $(\rm(a))_{a\in F^{-1}(b)}$ is \textbf{equal to} $\br(b),$
  \end{itemize}
  we refer to $\cH(\bA,\bB,d,F,\br,\rm)$ as a \newword{fully marked
    Hurwitz space}.
\end{Def}

Given any Hurwitz space $\cH=\cH(\bA,\bB,d,F,\br,\rm)$, we can
construct a fully marked Hurwitz space $\Hfull$ with a finite covering
map $\nu:\Hfull\to\cH$ as follows.  We first construct a superset
$\Afull$ of $\bA$, extending the functions $F$ and $\rm.$ For every
$b\in\bB$, and for every $r$ in the multiset complement
$\br(b)\setminus(\rm(a))_{a\in F^{-1}(b)\cap\bA}$, add an element
$a(b,r)$ to $\Afull$, set $F(a(b,r))=b,$ and set $\rm(a(b,r))=r.$ The
data $(\Afull,\bB,d,F,\br,\rm)$ satisfy Conditions 1 and $2'$, so
$\Hfull=\H(\Afull,\bB,d,F,\br,\rm)$ is a fully marked Hurwitz space.

Let $\Aut(\Afull\setminus\bA)$ be the subgroup of permutations of
$\Afull\setminus\bA$ preserving the functions $F$ and $\rm$. This
automorphism group acts freely on $\Hfull$ by relabeling points in
$\Afull\setminus\bA,$ and the quotient of this action is $\H.$ Denote
by $\nu$ the quotient map $\Hfull\to\H.$ The fully marked Hurwitz
space $\Hfull$ admits a map $\pi_{\Afull}$ to $\M_{0,\Afull}$. Also,
the injection $\bA\into\Afull$ yields a forgetful map
$\mu:\M_{0,\Afull}\to\M_{0,\bA}.$ The following diagrams commutes:
\begin{center}
  \begin{tikzpicture}
    \matrix(m)[matrix of math nodes,row sep=3em,column
    sep=4em,minimum width=2em] {
      &\Hfull&\\
      &\H&\M_{0,\Afull}\\
      \M_{0,\bB}&&\M_{0,\bA}\\};
    \path[-stealth] (m-1-2) edge node [left] {$\nu$} (m-2-2);
    \path[-stealth] (m-1-2) edge node [above right] {$\pi_{\Afull}$}
    (m-2-3);
    \path[-stealth] (m-2-3) edge node [right] {$\mu$} (m-3-3);
    \path[-stealth] (m-2-2) edge node [above right] {$\pi_{\bA}$} (m-3-3);
    \path[-stealth] (m-2-2) edge node [above left] {$\pi_{\bB}$} (m-3-1);
  \end{tikzpicture}
\end{center}
For $\Gamma$ any union of connected components of $\H,$
$\Gamma^{\full}:=\nu^{-1}(\Gamma)$ is a union of connected components
of $\H^{\full}.$ For compactifications $X_{\bB}$ and $X_{\bA}$ of
$\M_{0,\bB}$ and $\M_{0,\bA}$ respectively,
$(\Gamma,\pi_{\bB},\pi_{\bA})$ and
$(\Gamma^{\full},\pi_{\bB}\circ\nu,\mu\circ\pi^{\Afull})$ are both
Hurwitz correspondences from $X_{\bB}$ to $X_{\bA}$, and
$[\Gamma^{\full}]=(\deg\nu)[\Gamma]$ in $H_*(X_{\bB}\times X_{\bA}).$
This observation yields a useful lemma:
\begin{lem}\label{lem:ReduceToConnectedFullyMarked}
  Let $(\Gamma,\pi_\bB,\pi_{\bA}):X_\bB\ratt X_\bA$ be any Hurwitz
  correspondence. Then $$[\Gamma]=\frac{1}{\deg\nu}[\Gamma^{\full}],$$
  where each $\Gamma^{\full}$ is a union of connected components of a
  fully marked Hurwitz space $\Hfull$ corresponding to a superset
  $\Afull$ of $\bA,$ and $\nu:\Gamma^{\full}\to\Gamma$ is a finite
  covering map.
\end{lem}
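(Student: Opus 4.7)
The plan is to unwind the definition of the cycle class $[\Gamma]$ (Definition \ref{Def:Cycle}) and then to exploit the two facts that the restriction $\nu:\Gamma^{\full}\to\Gamma$ is a finite \'etale cover of degree $\deg\nu$ and that the relevant square of forgetful/projection maps commutes.

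First I would construct $\Afull$, $\Hfull$, and $\nu:\Hfull\to\H$ as in the discussion preceding the lemma, take $\Gamma^{\full}:=\nu^{-1}(\Gamma)$, and observe that because $\Aut(\Afull\setminus\bA)$ acts freely on $\Hfull$ with quotient $\H$, the restriction $\nu:\Gamma^{\full}\to\Gamma$ is a finite \'etale covering map of degree $\deg\nu$; in particular it is surjective onto $\Gamma$ and has no ramification. Next, I would pick a smooth projective compactification $\bar{\Gamma}^{\full}$ of $\Gamma^{\full}$ on which the maps $\pi_\bB\circ\nu$ and $\mu\circ\pi_{\Afull}$ both extend, and then take $\bar{\Gamma}$ to be a smooth projective compactification of $\Gamma$ dominated by $\bar{\Gamma}^{\full}$ (so that $\nu$ extends to $\bar\nu:\bar\Gamma^{\full}\to\bar\Gamma$ and $\pi_\bB,\pi_\bA$ extend on $\bar\Gamma$); by Definition \ref{Def:Cycle} the resulting cycle classes do not depend on these choices.

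The key computation is then the following chain of equalities in $H_{2\dim X_\bB}(X_\bB\times X_\bA)$. Using that the commutative diagram in Section \ref{sec:FullyMarked} gives $\pi_\bA\circ\nu=\mu\circ\pi_{\Afull}$, I factor the map $(\pi_\bB\circ\bar\nu,\,\mu\circ\overline{\pi_{\Afull}}):\bar{\Gamma}^{\full}\to X_\bB\times X_\bA$ through $\bar{\nu}$, obtaining
\begin{align*}
[\Gamma^{\full}]
&=\bigl((\pi_\bB\circ\bar\nu)\times(\mu\circ\overline{\pi_{\Afull}})\bigr)_*[\bar{\Gamma}^{\full}]\\
&=\bigl((\pi_\bB\circ\bar\nu)\times(\pi_\bA\circ\bar\nu)\bigr)_*[\bar{\Gamma}^{\full}]\\
&=(\pi_\bB\times\pi_\bA)_*\,\bar\nu_*[\bar{\Gamma}^{\full}]\\
&=(\deg\nu)\,(\pi_\bB\times\pi_\bA)_*[\bar{\Gamma}]\\
&=(\deg\nu)\,[\Gamma].
\end{align*}
Here the penultimate equality is the fact that the pushforward of the fundamental class under a generically finite surjective morphism of degree $d$ is $d$ times the fundamental class of the target, applied to the degree-$\deg\nu$ covering $\bar\nu:\bar\Gamma^{\full}\to\bar\Gamma$. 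Dividing by $\deg\nu$ gives the desired identity.

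I do not expect any serious obstacle; the only mildly delicate point is making sure that smooth projective compactifications can be chosen compatibly so that $\nu$ extends and Definition \ref{Def:Cycle} applies on both sides simultaneously. This can be arranged by first choosing any smooth projective $\bar{\Gamma}^{\full}$ with $\pi_\bB\circ\nu$ and $\mu\circ\pi_{\Afull}$ extended, and then taking $\bar\Gamma$ to be a resolution of the image of $\bar{\Gamma}^{\full}$ under $\bar\nu$ on which $\pi_\bB$ and $\pi_\bA$ extend; independence of $[\Gamma]$ and $[\Gamma^{\full}]$ from the compactification choice (already noted in Definition \ref{Def:Cycle}) then makes the computation above unambiguous.
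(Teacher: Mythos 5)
Your proposal is correct and fills in exactly what the paper leaves implicit: the paper states the identity $[\Gamma^{\full}]=(\deg\nu)[\Gamma]$ as an observation (following from the free $\Aut(\Afull\setminus\bA)$-action realizing $\nu$ as a degree-$(\deg\nu)$ covering) and offers no written proof. Your version makes the intended argument explicit --- choosing compatible smooth projective compactifications, using the commuting square $\pi_\bA\circ\nu=\mu\circ\pi_{\Afull}$ to factor the map to $X_\bB\times X_\bA$ through $\bar\nu$, and applying the fact that pushforward of the fundamental class by a generically finite degree-$(\deg\nu)$ morphism multiplies by $\deg\nu$ --- and this is exactly the computation the paper has in mind.
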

This means that any Hurwitz correspondence can be written in terms of
(connected components of) fully marked Hurwitz spaces. These have
convenient compactifications by moduli spaces of admissible covers. We
use this Lemma in Proposition \ref{prop:KComposable}
and Theorem \ref{thm:PreservesFiltration}.

\subsection{Admissible Covers}\label{sec:DefAdmissibleCovers}
For an introduction see \cite{HarrisMorrison1998}, Chapter 3G.
\begin{Def}[\cite{HarrisMumford1982}]
  Fix $(\bA,\bB,d,F,\br,\rm)$ as in Definition
  \ref{Def:HurwitzSpace}. An $(\bA,\bB,d,F,\br,\rm)$-admissible cover
  is a map of curves $f:C\to D$, where
  \begin{enumerate}[(1)]
  \item $D$ is a $\bB$-marked genus zero stable curve,
  \item $C$ is a (not necessarily stable) connected nodal genus zero
    curve, with an injection from $\bA$ into the smooth locus of $C$,
  \item $f:C\to D$ is a finite map of degree $d$, such that
    \begin{itemize}
    \item for all $a\in\bA,$ $f(a)=F(a)$ (via the injections $\bA\into
      C$ and $\bB\into D$),
    \item for all $b\in\bB,$ the branching of $f$ over $b$ is given by
      the partition $\br(b),$
    \item for all $a\in\bA,$ the local degree of $f$ at $a$ is equal
      to $\rm(a)$, 
    \item $\eta$ is a node on $C$ if and only if $f(\eta)$ is a node
      on $D$,
    \item (balancing condition) for $\eta$ a node of $C$ between
      irreducible components $C_1$ and $C_2$ of $C,$ $f(C_1)$ and
      $f(C_2)$ are distinct components of $D$, and the local degree of
      $f|_{C_1}$ at $\eta$ is equal to the local degree of $f|_{C_2}$ at
      $\eta.$
    \end{itemize}
  \end{enumerate}
\end{Def}
\begin{rem}
  If $(\bA,\bB,d,F,\br,\rm)$ satisfies Conditions 1 and $2'$ as in
  Definition \ref{Def:FullyMarked}, then $C$ is an $\bA$-marked
  \emph{stable} curve.
\end{rem}
\begin{thm}[\cite{HarrisMumford1982}]\label{thm:AdmissibleCovers}
  Given $(\bA,\bB,d,F,\br,\rm)$ satisfying Conditions 1 and $2'$ as in
  Definition \ref{Def:FullyMarked}, there is a projective variety
  $\Hbar=\Hbar(\bA,\bB,d,F,\br,\rm)$ that is a fine moduli space for
  $(\bA,\bB,d,F,\br,\rm)$-admissible covers, and contains $\H$ as a
  dense open subset. $\Hbar$ extends the maps $\pi_\bA$ and $\pi_\bB$
  to maps $\bar{\pi_{\bA}}$ and $\bar{\pi_{\bB}}$ to $\Mbar_{0,\bA}$
  and $\Mbar_{0,\bB}$, respectively, with
  $\bar{\pi_\bB}:\Hbar\to\Mbar_{0,\bB}$ a finite flat map. $\Hbar$ may
  not be normal, but its normalization is smooth.
\end{thm}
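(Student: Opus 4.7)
The plan is to follow Harris and Mumford's original construction, realizing $\Hbar$ as a closed subscheme of a relative Hilbert scheme. First I would form the universal $\bB$-marked stable curve $\Ubar_{0,\bB}\to\Mbar_{0,\bB}$ and, inside the relative Hilbert scheme over $\Mbar_{0,\bB}$ of finite degree-$d$ covers of $\Ubar_{0,\bB}$ by nodal genus-zero curves, cut out the locally closed conditions imposed by the definition of an admissible cover: the section $\bA\into C$ with $f(a)=F(a)$, the prescribed branching $\br(b)$ over each $b\in\bB$, the prescribed local degree $\rm(a)$ at each $a\in\bA$, the coincidence of nodes on source and target under $f$, and the balancing condition at every node. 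Their intersection is $\Hbar$, and it contains $\H$ as the dense open locus where $D$ is smooth. The maps $\bar\pi_\bA$ and $\bar\pi_\bB$ are the obvious projections, and Condition~$2'$ guarantees that the source curve $(C,\bA)$ is automatically stable --- every ramification point of $f$ lies in $\bA$, so each irreducible component of $C$ has at least three special points --- so $\bar\pi_\bA$ indeed lands in $\Mbar_{0,\bA}$.

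Second, I would establish properness of $\Hbar$ over $\Mbar_{0,\bB}$ via the valuative criterion. Given a family of admissible covers over the punctured spectrum of a DVR with target extending to a (possibly nodal) curve in $\Mbar_{0,\bB}$, one produces a limit admissible cover after a ramified base change whose order divides the least common multiple of the ramification indices; the balancing condition then uniquely fixes the local model $t=u^{r}$ at each newly created node and pins down the central fibre. This construction of admissible limits, and the verification of uniqueness once balancing is imposed simultaneously at every node of the limit, is the main technical obstacle.

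Third, under Condition~$2'$ the moduli functor is representable: an automorphism of $f$ fixing every point of $\bA$ must be trivial because $(C,\bA)$ is stable. From the Hilbert-scheme construction, $\bar\pi_\bB$ has proper zero-dimensional fibres (admissible covers of a fixed nodal $D$ of the prescribed combinatorial type form a finite set), so $\bar\pi_\bB$ is finite. Flatness will follow from the local analytic description of $\Hbar$ below, which exhibits $\bar\pi_\bB$ \'etale-locally as a product of the maps $u\mapsto u^{r}$, manifestly flat.

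Finally, for the smoothness of the normalization, I would carry out the standard local deformation-theoretic analysis at an admissible cover $f:C\to D$. Near a node $\eta$ on $C$ with local equation $xy=u$ mapping with local degree $r$ to a node $\eta'$ on $D$ with local equation $XY=t$, the relation $t=u^{r}$ forces the completed local ring of $\Hbar$ at $[f]$ to be, analytically, the tensor product of a smooth factor parametrizing deformations away from the nodes with one factor $\C[[u_i]]$ per source node; thus each analytic branch of $\Hbar$ is smooth in the $u_i$-coordinates, even though distinct branches may meet non-normally in $\Hbar$. Consequently the normalization of $\Hbar$ is smooth, and in these coordinates $\bar\pi_\bB$ is the map $u_i\mapsto u_i^{r_i}$.
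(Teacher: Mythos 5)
The paper does not prove this theorem; it is cited from Harris--Mumford, and the paper instead follows the statement with a remark pointing out the one needed modification of their construction: take $\Hilb$ to be the relative Hilbert scheme of degree-$d$ morphisms $\Ubar_{0,\bA}\to\Ubar_{0,\bB}$ over the \emph{product} $\Mbar_{0,\bA}\times\Mbar_{0,\bB}$, so that $\Hbar$ is a closed subscheme of $\Hilb$ by the argument of Theorem~4 in \cite{HarrisMumford1982}, and projectivity together with the two maps $\bar{\pi_\bA}$, $\bar{\pi_\bB}$ are tautological. Your construction instead works over $\Mbar_{0,\bB}$ alone and invokes a ``relative Hilbert scheme of finite degree-$d$ covers of $\Ubar_{0,\bB}$ by nodal genus-zero curves''; as written this is not an honest Hilbert scheme, since the source curve $C$ is not a priori a subscheme of any fixed projective family. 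What rescues your route is precisely Condition~$2'$, which forces $(C,\bA)$ to be a stable $\bA$-marked curve and hence a fiber of $\Ubar_{0,\bA}\to\Mbar_{0,\bA}$ --- but once you use that you are back to the embedding over the product, so you should set that up at the start rather than observe it as an afterthought of Step~1. The rest of your outline --- properness via stable reduction with the honest caveat that uniqueness after imposing balancing at every node simultaneously is the hard part, representability from the automorphism-freeness that Condition~$2'$ provides, finite flatness of $\bar{\pi_\bB}$ from the local model $t=u^r$ at each source node (the completed local ring is a tensor product of factors $\C[[u_i]]$, each mapping to $\C[[t]]$ by $t=u_i^{r_i}$, manifestly flat), and smoothness of each analytic branch hence of the normalization --- reproduces the Harris--Mumford argument accurately and is correct in content; the two routes converge to the same proof once the Hilbert-scheme setup is made precise.
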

\begin{rem}
  The irreducible components of $\Hbar$ correspond to the connected
  components of $\H.$
\end{rem}
\begin{rem}
  The construction in \cite{HarrisMumford1982} is very general, but as
  stated allows for only simple ramification and no marked points on
  the source curve, so does not apply to our case. However, it is
  easily modified: consider $\Mbar_{0,\bA}\times\Mbar_{0,\bB}$ with
  its two universal curves $\Ubar_{0,\bA}$ and $\Ubar_{0,\bB}$. Let
  $\Hilb$ be the relative Hilbert scheme of degree $d$ morphisms
  $\Ubar_{0,\bA}\to\Ubar_{0,\bB}$. Then the locus
  $$\Hbar:=\{f:C\to D|\mbox{$f$ is an
    $(\bA,\bB,d,F,\br,\rm)$-admissible cover}\}$$
  is a closed subscheme of $\Hilb$ by the proof of Theorem 4 in
  \cite{HarrisMumford1982}.
\end{rem}
Given $(\bA,\bB,d,F,\br,\rm)$ satisfying Conditions 1 and 2 as in
Definition \ref{Def:HurwitzSpace}, but not $2'$ as in Definition
\ref{Def:FullyMarked}, there is still a compactification $\Hbar$ of
$\H=\H(\bA,\bB,d,F,\br,\rm)$ by admissible covers. Consider the
corresponding fully marked Hurwitz space $\Hfull$ and its admissible
covers compactification $\Hbarfull$. The action of
$\Aut(\Afull\setminus\bA)$ on $\Hfull$ extends to an action on
$\Hbarfull$, but this action is no longer free, so the quotient
$\Hbar$ has \textit{orbifold singularities}, and is only a coarse
moduli space.
\subsection{Stratification of
  $\Hbar$}\label{sec:StratificationOfAdmissibleCovers}
Moduli spaces of admissible covers have a stratification analogous to
and compatible with that of $\Mbar_{0,N}$.  In this section we fix
$(\bA,\bB,d,F,\br,\rm)$ satisfying Conditions 1 and $2'$ as in
Definition \ref{Def:FullyMarked}. Let $\H=\H(\bA,\bB,d,F,\br,\rm)$ be
the corresponding fully marked Hurwitz space, and let
$\Hbar=\Hbar(\bA,\bB,d,F,\br,\rm)$.
\begin{Def}
  Given any admissible cover $[f:C\to D]\in\Hbar,$ we can extract its
  \newword{combinatorial type}
  $\gamma=(\sigma,\tau,d^{\vert},f^{\vert},(F_v)_{v\in\mathcal{V}(\sigma)},(\br_v)_{v\in\mathcal{V}(\sigma)},(\rm_v)_{v\in\mathcal{V}(\sigma)}),$
  where:
  \begin{enumerate}[(1)]
  \item $\sigma$ is the dual tree of $C$,
  \item $\tau$ is the dual tree of $D$,
  \item $d^{\vert}:\mathcal{V}(\sigma)\to\Z^{>0}$ sends $v$ to
    $\deg f|_{C_v}$,
  \item $f^{\vert}:\mathcal{V}(\sigma)\to\mathcal{V}(\tau)$, where for
    $C_v$ an irreducible component of $C$ mapping under $f$ to $D_w$
    an irreducible component of $D,$ $f^{\vert}$ sends $v$ to $w$,
  \item For $v\in\mathcal{V}(\sigma)$, the map
    $F_v:\Delta_v\to\Delta_{f^{\vert}(v)}$ is given by the restriction
    of $f$ via the inclusions $\Delta_v\into C_v$ and
    $\Delta_{f^{\vert}(v)}\into D_{f^{\vert}(v)}$,
  \item For $v\in\mathcal{V}(\sigma),$ the map
    $\br_v:\Delta_{f^{\vert}(v)}\to\{\mbox{partitions of
      $d^{\vert}(v)$}\}$ sends $\delta'\in\Delta_{f^{\vert}(v)}$ to
    the branching of $f|_{C_v}:C_v\to D_{f^{\vert}(v)}$ over
    $\delta',$ via the inclusion $\Delta_{f^{\vert}(v)}\into
    D_{f^{\vert}(v)}$, and
  \item For $v\in\mathcal{V}(\sigma)$, the map
    $\rm_v:\Delta_v\to\Z^{>0}$ sends $\delta\in\Delta_v$ to the local
    degree of $f|_{C_v}$ at $\delta,$ via the inclusion
    $\Delta_v\into C_v.$
  \end{enumerate}
\end{Def}
\begin{Def}
  The closure $G_\gamma$ of $\{f':C'\to D'|\mbox{$f'$ has
    combinatorial type $\gamma$}\}$ is a subvariety of $\Hbar$.  We
  call such a subvariety a \newword{boundary stratum} of $\Hbar$.
\end{Def}
The boundary stratum $G_\gamma$ in $\Hbar$ can be decomposed into a
product of lower-dimensional spaces of admissible covers. For
$v\in\mathcal{V}(\sigma),$ the data
$(\Delta_v,\Delta_{f^{\vert}(v)},d^\vert(v),F_v,\br_v,\rm_v)$ satisfy
Conditions 1 and $2'$ as in Definition \ref{Def:FullyMarked}. Denote
by $\Hbar_v$ the corresponding space of admissible covers. The space
$\Hbar_v$ admits maps to the moduli space $\Mbar_{0,\Delta_v}$ of
source curves and the moduli space $\Mbar_{0,\Delta_{f^{\vert}(v)}}$
of target curves. For $w\in\mathcal{V}(\tau),$ set $\Hbar_w$ to be the
\emph{fibered} product
\begin{align*}
  \Hbar_w:=\prod_{v\in(f^{\vert})^{-1}(w)}\Hbar_v,
\end{align*}
where the product is fibered over the common moduli space
$\Mbar_{0,\Delta_w}$ of target curves.

The fibered product $\Hbar_w$ is itself a moduli space of possibly
disconnected admissible covers, admitting a map
$\bar\pi_w^{\mathrm{source}}$ to the moduli space
$\prod_{v\in(f^{\vert})^{-1}(w)}\Mbar_{0,\Delta_v}$ of source curves
and a finite flat map $\bar\pi_w^{\mathrm{target}}$ to the moduli
space $\Mbar_{0,\Delta_w}$ of target curves. The stratum $G_\gamma$ is
isomorphic to $\prod_{w\in\mathcal{V}(\tau)}\Hbar_w.$

Recall that the boundary stratum $T_\tau$ in $\Mbar_{0,\bB}$ is
isomorphic to $\prod_{w\in\mathcal{V}(\tau)}\Mbar_{0,\Delta_w}.$ We
have
\begin{center}
  \begin{tikzpicture}
    \matrix(m)[matrix of math nodes,row sep=3em,column sep=8em,minimum
    width=2em] {
      \prod_{w\in\mathcal{V}(\tau)}\Hbar_w&\prod_{w\in\mathcal{V}(\tau)}\Mbar_{0,\Delta_w}\\
      G_\gamma&T_\tau\\
      \Hbar&\Mbar_{0,\bB}\\
    };
    \path[-stealth] (m-1-1) edge node [above] {$\prod_{w\in\mathcal{V}(\tau)}\bar\pi_w^{\mathrm{target}}$} (m-1-2);
    \path[-stealth] (m-2-1) edge node [above] {$\bar{\pi_\bB}|_{G_\gamma}$} (m-2-2);
    \path[-stealth] (m-3-1) edge node [above] {$\bar{\pi_\bB}$} (m-3-2);
    \path[-stealth,<->] (m-1-1) edge node [left] {$\cong$} (m-2-1);
    \path[-stealth,<->] (m-1-2) edge node [right] {$\cong$} (m-2-2);
    \path[-stealth,right hook->] (m-2-1) edge  (m-3-1);
    \path[-stealth,right hook->] (m-2-2) edge  (m-3-2);
  \end{tikzpicture}
\end{center}
All rightwards arrows are finite flat maps to moduli spaces of target
curves.

Similarly, the stratum $S_\sigma$ in $\Mbar_{0,\bA}$ is isomorphic to
$\prod_{v\in\mathcal{V}(\sigma)}\Mbar_{0,\Delta_v},$ and we have
\begin{center}
  \begin{tikzpicture}
    \matrix(m)[matrix of math nodes,row sep=2em,column sep=8em,minimum
    width=2em] {
      \prod\limits_{w\in\mathcal{V}(\tau)}\Hbar_w&\prod\limits_{w\in\mathcal{V}(\tau)}\prod\limits_{v\in
        (f^{\vert})^{-1}(w)}\Mbar_{0,\Delta_v}\\
      &\prod\limits_{v\in\mathcal{V}(\sigma)}\Mbar_{0,\Delta_v}\\
      G_\gamma&S_\sigma\\
      \Hbar&\Mbar_{0,\bA}\\
    }; \path[-stealth] (m-1-1) edge node [above] {$\prod_{w\in\mathcal{V}(\tau)}\bar\pi_w^{\mathrm{source}}$} (m-1-2);
    \path[-stealth] (m-3-1) edge node [above] {$\bar{\pi_\bA}|_{G_\gamma}$} (m-3-2);
    \path[-stealth] (m-4-1) edge node [above] {$\bar{\pi_\bA}$} (m-4-2);
    \path[-stealth,<->] (m-1-1) edge node [left] {$\cong$} (m-3-1);
    \draw[double distance=2pt,shorten <=5pt,shorten >=5pt] (m-1-2) -- (m-2-2);
    \path[-stealth,<->] (m-2-2) edge node [right] {$\cong$} (m-3-2);
    \path[-stealth,right hook->] (m-3-1) edge  (m-4-1);
    \path[-stealth,right hook->] (m-3-2) edge  (m-4-2);
  \end{tikzpicture}
\end{center}
All rightwards arrows are maps to moduli spaces of source curves.

$G_\gamma$ is not necessarily irreducible. Its irreducible components
are of the form $$J=\prod_{w\in\mathcal{V}(\tau)}\bar{\cJ_w},$$ where
$\bar{\cJ_w}$ is an irreducible component of $\Hbar_w.$

\subsection{Cohomology and Chow groups}\label{sec:CohomologyChow} Let
$\Hbar=\Hbar(\bA,\bB,d,F,\br,\rm)$ be as in the statement of Theorem
\ref{thm:AdmissibleCovers}. Although $\Hbar$ is not smooth, there is a
pullback map of Chow groups
$(\bar{\pi_{\bB}})^*:A_k(\Mbar_{0,\bB})\to A_k(\Hbar)$
(\cite{Fulton1998}, Section 6.2). We thus have
$$(\bar{\pi_{\bA}})_*\circ(\bar{\pi_{\bB}})^*:A_k(\Mbar_{0,\bB})\to
A_k(\Mbar_{0,\bA}).$$
For any resolution of singularities $\chi:\tilde{\H}\to\Hbar,$
$\chi_*\circ(\bar{\pi_{\bB}}\circ\chi)^*:A_k(\Mbar_{0,\bB})\to
A_k(\Hbar)$
agrees with $(\bar{\pi_{\bB}})^*$ (\cite{Fulton1998},
Theorem 6.2a). Thus $(\bar{\pi_{\bA}})_*\circ(\bar{\pi_{\bB}})^*$
agrees with
$(\bar{\pi_{\bA}}\circ\chi)_*\circ(\bar{\pi_{\bB}}\circ\chi)^*$ as
maps from $A_k(\Mbar_{0,\bB})$ to $A_k(\Mbar_{0,\bA}).$

By \cite{Keel1992}, the `cycle class' map gives canonical isomorphisms
$A_k(\Mbar_{0,N})\to H_{2k}(\Mbar_{0,N},\Z).$ Under these isomorphisms,
$$(\bar{\pi_{\bA}}\circ\chi)_*\circ(\bar{\pi_{\bB}}\circ\chi)^*:A_k(\Mbar_{0,\bB})\to
A_k(\Mbar_{0,\bA})$$
is identified
with
$$(\bar{\pi_{\bA}}\circ\chi)_*\circ(\bar{\pi_{\bB}}\circ\chi)^*:H_{2k}(\Mbar_{0,\bB},\Z)\to
H_{2k}(\Mbar_{0,\bA},\Z).$$ Thus the pushforward
$[\H]_*:H_{2k}(\Mbar_{0,\bB},\Z)\to H_{2k}(\Mbar_{0,\bA},\Z)$ may be
identified with
$(\bar{\pi_{\bA}})_*\circ(\bar{\pi_{\bB}})^*:A_k(\Mbar_{0,\bB})\to
A_k(\Mbar_{0,\bA}).$ An analogous statement holds for $[\Gamma]_*,$
where $\Gamma$ is any union of connected components of $\H.$ We use
this identification in Proposition \ref{prop:KComposable} and Theorem
\ref{thm:PreservesFiltration}.

\section{Hurwitz correspondences are algebraically stable on
  $\Mbar_{0,N}$}\label{sec:HurwitzStableCurves}
The admissible covers compactification $\Hbar$ of
$\H=\H(\bA,\bB,d,F,\br,\rm)$ naturally lives over the spaces
$\Mbar_{0,N}$ of stable curves: it extends the ``target curve'' map to
$\Mbar_{0,\bB}$ and the ``source curve'' map to $\Mbar_{0,\bA}$. We
treat the Hurwitz correspondence $\H$ as a multivalued map from
$\M_{0,\bB}$ to $\M_{0,\bA}$. More precisely, $\H$ induces a map from
$\M_{0,\bB}$ to $\Sym^{(\deg\pi_{\bB})}\M_{0,\bA}.$ Since
$\bar{\pi_{\bB}}:\Hbar\to\Mbar_{0,\bB}$ is finite and flat, this
extends to a map from $\Mbar_{0,\bB}$ to
$\Sym^{(\deg\pi_{\bB})}\Mbar_{0,\bA}.$ Thus the rational
correspondence
$(\H,\pi_{\bB},\pi_{\bA}):\Mbar_{0,\bB}\ratt\Mbar_{0,\bA}$ may be
treated as a regular correspondence. This is supported by the fact
that Hurwitz correspondences are homologically composable on the
stable curves spaces $\Mbar_{0,N}$. The following proposition is the
result of a collaboration with Sarah Koch and David Speyer.
\begin{prop}[Koch, Ramadas, Speyer]\label{prop:KComposable}
  Let $(\Gamma,\pi_1,\pi_2):\Mbar_{0,N_1}\ratt\Mbar_{0,N_2}$ and
  $(\Gamma',\pi_2',\pi_3'):\Mbar_{0,N_2}\ratt\Mbar_{0,N_3}$ be Hurwitz
  correspondences. Then for all $k$,
  \begin{align*}
    [\Gamma'\circ\Gamma]_*=[\Gamma']_*\circ[\Gamma]_*
  \end{align*}
  as maps from $H_{2k}(\Mbar_{0,N_1})$ to $H_{2k}(\Mbar_{0,N_3}).$
\end{prop}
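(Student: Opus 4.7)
The strategy is to leverage the admissible covers compactifications of fully marked Hurwitz spaces, where the target-curve maps are finite and flat (Theorem \ref{thm:AdmissibleCovers}), together with flat base change for Chow groups. Throughout, identify $H_{2k}(\Mbar_{0,N}) \cong A_k(\Mbar_{0,N})$ as in Section \ref{sec:CohomologyChow}. By Lemma \ref{lem:ReduceToConnectedFullyMarked}, we may assume $\Gamma$ and $\Gamma'$ are unions of connected components of fully marked Hurwitz spaces, with admissible covers compactifications $\Hbar_1 \supset \Gamma$ and $\Hbar_2 \supset \Gamma'$, finite flat target-curve maps $\bar{\pi}_{\bB,i}\colon \Hbar_i \to X_i := \Mbar_{0,N_i}$, and proper source-curve-plus-forgetful maps $\bar{\pi}_{\bA,i}\colon \Hbar_i \to X_{i+1}$, so that $[\Gamma]_* = (\bar{\pi}_{\bA,1})_* \circ \bar{\pi}_{\bB,1}^*$ and $[\Gamma']_* = (\bar{\pi}_{\bA,2})_* \circ \bar{\pi}_{\bB,2}^*$.

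Next, I would form the fibered product $\W := \Hbar_1 \times_{X_2} \Hbar_2$ along $\bar{\pi}_{\bA,1}$ and $\bar{\pi}_{\bB,2}$, with projections $q_1, q_2$. Since $\bar{\pi}_{\bB,2}$ is finite flat, so is its base change $q_1$; in particular $\W$ is pure of dimension $\dim \Hbar_1 = \dim X_1$. Flat base change for Chow groups then yields $\bar{\pi}_{\bB,2}^* \circ (\bar{\pi}_{\bA,1})_* = (q_2)_* \circ q_1^*$, and so
\begin{align*}
[\Gamma']_* \circ [\Gamma]_*
&= (\bar{\pi}_{\bA,2})_* \, \bar{\pi}_{\bB,2}^* \, (\bar{\pi}_{\bA,1})_* \, \bar{\pi}_{\bB,1}^* \\
&= (\bar{\pi}_{\bA,2} \circ q_2)_* \, (\bar{\pi}_{\bB,1} \circ q_1)^*.
\end{align*}

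It remains to identify the last display with $[\Gamma' \circ \Gamma]_*$, which amounts to showing that $\W$, equipped with its two maps to $X_1$ and $X_3$, is a valid compactification of the composite correspondence --- that is, $((\bar{\pi}_{\bB,1} \circ q_1) \times (\bar{\pi}_{\bA,2} \circ q_2))_*[\W] = [\Gamma' \circ \Gamma]$ in $A_{\dim X_1}(X_1 \times X_3)$. I expect this compactification check to be the main obstacle: one must verify that the open subset $q_1^{-1}(\Gamma) \cap q_2^{-1}(\Gamma')$, on which $\W$ agrees with a representative of $\Gamma' \circ \Gamma$ by construction, is dense in $\W$, and that $\W$ has no spurious irreducible components supported in the boundary. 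The finite flatness of $\bar{\pi}_{\bB,2}$, inherited by $q_1$, forces purity of dimension on $\W$ and rules out such pathology; once this is settled, everything else is a formal application of flat base change.
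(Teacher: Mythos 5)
Your proof is correct and takes essentially the same approach as the paper: reduce to fully marked Hurwitz spaces via Lemma \ref{lem:ReduceToConnectedFullyMarked}, form the fibered product of admissible covers compactifications over $\Mbar_{0,N_2}$, apply flat base change for Chow groups (Fulton, Proposition 1.7), and use finite flatness of the target-curve map (inherited by the base-changed projection) to rule out spurious irreducible components and certify the fibered product as a valid compactification of $\Gamma'\circ\Gamma$. The only slip is notational: since $\Gamma$ and $\Gamma'$ may be proper unions of connected components, you should take $\Hbar_i$ to mean the closures $\bar\Gamma,\bar{\Gamma'}$ inside the admissible covers spaces rather than the full compactifications, exactly as the paper does, so that $[\Gamma]_*=(\bar\pi_{\bA,1})_*\circ\bar\pi_{\bB,1}^*$ holds on the nose.
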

\begin{proof}
  We prove instead that
  $[\Gamma'\circ\Gamma]_*=[\Gamma']_*\circ[\Gamma]_*$ as maps from
  $A_k(\Mbar_{0,N_1})$ to $A_k(\Mbar_{0,N_3})$ (see Section
  \ref{sec:CohomologyChow}).

  By Lemma \ref{lem:ReduceToConnectedFullyMarked}, we may reduce to
  the case in which $\Gamma$ and $\Gamma'$ are unions of connected
  components of fully marked Hurwitz spaces $\H$ and $\H'$
  respectively. The maps $\pi_1$ and $\pi_2'$ to the moduli spaces of
  target curves $\M_{0,N_1}$ and $\M_{0,N_2}$ are finite covering
  maps. Also we have $\pi_2(\Gamma)\subseteq\M_{0,N_2}$. Applying the
  definition of composition of rational correspondences given in
  Section \ref{sec:RationalCorrespondences}, we may
  set $$\Gamma'\circ\Gamma=\Gamma\times_{\M_{0,N_2}}\Gamma',$$ with
  maps $\pr$ and $\pr'$ to $\Gamma$ and $\Gamma'$ respectively. The
  map $\pr$ is the pullback of a covering map, so is itself a covering
  map.  Let $\bar{\Gamma}$ and $\bar{\Gamma'}$ be the closures of
  $\Gamma$ and $\Gamma'$ in the admissible covers spaces $\Hbar$ and
  $\Hbar'.$ We have the diagram
  \begin{center}
    \begin{tikzpicture}
      \matrix(m)[matrix of math nodes,row sep=2em,column
      sep=3em,minimum width=2em] {
        &&\Gamma\times_{\M_{0,N_2}}\Gamma'&&\\
        &\Gamma&&\Gamma'&\\
        \M_{0,N_1}&&\M_{0,N_2}&&\M_{0,N_3}\\
      };
      \path[-stealth] (m-1-3) edge node [above left] {$\pr$} (m-2-2);
      \path[-stealth] (m-1-3) edge node [above right] {$\pr'$} (m-2-4);
      \path[-stealth] (m-2-2) edge node [above left] {$\pi_1$} (m-3-1);
      \path[-stealth] (m-2-2) edge node [above right] {$\pi_2$} (m-3-3);
      \path[-stealth] (m-2-4) edge node [above left] {$\pi_2'$} (m-3-3);
      \path[-stealth] (m-2-4) edge node [above right] {$\pi_3'$} (m-3-5);
    \end{tikzpicture}
  \end{center}
  
  We also have the diagram of
  compactifications:
  \begin{center}
    \begin{tikzpicture}
      \matrix(m)[matrix of math nodes,row sep=2em,column
      sep=3em,minimum width=2em] {
        &&\bar{\Gamma}\times_{\Mbar_{0,N_2}}\bar{\Gamma'}&&\\
        &\bar{\Gamma}&&\bar{\Gamma'}&\\
        \Mbar_{0,N_1}&&\Mbar_{0,N_2}&&\Mbar_{0,N_3}\\
      };
      \path[-stealth] (m-1-3) edge node [above left] {$\bar{\pr}$} (m-2-2);
      \path[-stealth] (m-1-3) edge node [above right] {$\bar{\pr'}$} (m-2-4);
      \path[-stealth] (m-2-2) edge node [above left] {$\bar{\pi_1}$} (m-3-1);
      \path[-stealth] (m-2-2) edge node [above right] {$\bar{\pi_2}$} (m-3-3);
      \path[-stealth] (m-2-4) edge node [above left] {$\bar{\pi_2'}$} (m-3-3);
      \path[-stealth] (m-2-4) edge node [above right] {$\bar{\pi_3'}$}
      (m-3-5);
    \end{tikzpicture}
  \end{center}

  Since $\bar{\pi_1}$ and $\bar{\pi_2'}$ are finite and flat, so is
  $\bar{\pi_1}\circ\bar{\pr}.$ This means that
  $\bar{\Gamma}\times_{\Mbar_{0,N_2}}\bar{\Gamma'}$ has no irreducible
  components supported over the boundary
  $\Mbar_{0,N1}\setminus\M_{0,N_1}.$ There is an inclusion
  $\Gamma\times_{\M_{0,N_2}}\Gamma'\into\bar{\Gamma}\times_{\Mbar_{0,N_2}}\bar{\Gamma'}$
  whose image is $(\bar{\pi_1}\circ\bar{\pr})^{-1}(\M_{0,N_1}).$ By
  the above this is an inclusion as a dense open set. We therefore
  have
  \begin{align*}
    [\Gamma'\circ\Gamma]_*&=[\bar\Gamma\times_{\Mbar_{0,N_2}}\bar{\Gamma'}]_*\\
    &=(\bar{\pi_3'}\circ\bar{\pr'})_*\circ(\bar{\pi_1}\circ\bar{\pr})^*\\
    &=(\bar{\pi_3'})_*\circ(\bar{\pr'})_*\circ(\bar{\pr})^*\circ(\bar{\pi_1})^*.
  \end{align*}
  On the other hand,
  \begin{align*}
    [\Gamma']_*\circ[\Gamma]_*&=[\bar{\Gamma'}]_*\circ[\bar\Gamma]_*\\
    &=(\bar{\pi_3'})_*\circ(\bar{\pi_2'})^*\circ(\bar{\pi_2})_*\circ(\bar{\pi_1})^*\\
    &=(\bar{\pi_3'})_*\circ(\bar{\pr'})_*\circ(\bar{\pr})^*\circ(\bar{\pi_1})^*\\
    &=[\Gamma'\circ\Gamma]_*.
  \end{align*}
  Here, the third equality follows from the fact (Proposition 1.7 in
  \cite{Fulton1998}) that for any fibered square of varieties
  \begin{center}
    \begin{tikzpicture}
      \matrix(m)[matrix of math nodes,row sep=2em,column
      sep=3em,minimum width=2em] {
        &\bar{\Gamma}\times_{\M}\bar{\Gamma'}&\\
        \bar{\Gamma}&&\bar{\Gamma'}\\
        &\M&\\
      };
      \path[-stealth] (m-1-2) edge node [above left] {$\bar{\pr}$} (m-2-1);
      \path[-stealth] (m-1-2) edge node [above right] {$\bar{\pr'}$} (m-2-3);
      \path[-stealth] (m-2-1) edge node [below left] {$\bar{\pi}$} (m-3-2);
      \path[-stealth] (m-2-3) edge node [below right] {$\bar{\pi'}$} (m-3-2);
    \end{tikzpicture}
  \end{center}
  where $\bar{\pi'}$ is a flat map and $\bar{\pi}$ is proper, we have
  \begin{align*}
    (\bar{\pi'})^*\circ(\bar{\pi})_*&=(\bar{\pr'})_*\circ(\bar{\pr})^*. \qedhere
  \end{align*}
\end{proof}

By duality of pushforward and pullback, and the fact (Keel,
\cite{Keel1992}) that $H^{2k}(\Mbar_{0,N})=H^{k,k}(\Mbar_{0,N}),$ we obtain:  
\begin{cor}[Koch, Ramadas, Speyer]\label{cor:LargestEigenvalue}
  Let $(\Gamma,\pi_1,\pi_2):\Mbar_{0,N}\ratt\Mbar_{0,N}$ be a dominant
  Hurwitz self-correspondence. Then $\Gamma$ is algebraically stable,
  and its $k$th dynamical degree is the absolute value of the dominant
  eigenvalue of $[\Gamma]_*:H_{2k}(\Mbar_{0,N})\to
  H_{2k}(\Mbar_{0,N})$
\end{cor}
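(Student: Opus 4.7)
The plan is to deduce the corollary directly from Proposition \ref{prop:KComposable} together with two facts already recorded in the paper: the duality between pushforward and pullback established in Section \ref{sec:RationalCorrespondences}, and Keel's theorem that $H^{2k}(\Mbar_{0,N}) = H^{k,k}(\Mbar_{0,N})$.

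First, I would specialize Proposition \ref{prop:KComposable} to the case $\Gamma = \Gamma'$, a dominant Hurwitz self-correspondence on $\Mbar_{0,N}$. The proposition yields $[\Gamma^2]_* = [\Gamma]_* \circ [\Gamma]_*$ on $H_{2k}(\Mbar_{0,N})$. Iterating and using the associativity of composition of rational correspondences (up to cycle equivalence) together with Proposition \ref{prop:KComposable} applied at each step, I obtain $[\Gamma^n]_* = ([\Gamma]_*)^n$ for all $n \ge 1$ and all $k$.

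Next, by the duality statement in Section \ref{sec:RationalCorrespondences}, the pullback $[\Gamma]^* : H^{2k}(\Mbar_{0,N}) \to H^{2k}(\Mbar_{0,N})$ is the dual (transpose) of the pushforward $[\Gamma]_* : H_{2k}(\Mbar_{0,N}) \to H_{2k}(\Mbar_{0,N})$ under the perfect pairing between $H^{2k}$ and $H_{2k}$. Taking duals of the identity $[\Gamma^n]_* = ([\Gamma]_*)^n$ therefore gives $[\Gamma^n]^* = ([\Gamma]^*)^n$ on $H^{2k}(\Mbar_{0,N})$. Since Keel showed $H^{2k}(\Mbar_{0,N}) = H^{k,k}(\Mbar_{0,N})$, this is precisely the statement that $\Gamma$ is $k$-stable (Section \ref{sec:DynamicalDegrees}). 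As $k$ was arbitrary, $\Gamma$ is algebraically stable.

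Finally, for a $k$-stable rational correspondence, the $k$th dynamical degree is the absolute value of the dominant eigenvalue of $[\Gamma]^*$ on $H^{k,k}(\Mbar_{0,N})$, as recalled in Section \ref{sec:DynamicalDegrees}. Under the duality between $H^{2k}$ and $H_{2k}$, the operators $[\Gamma]^*$ and $[\Gamma]_*$ are transposes of one another and hence have the same characteristic polynomial and the same eigenvalues. Therefore the dynamical degree equals the absolute value of the dominant eigenvalue of $[\Gamma]_* : H_{2k}(\Mbar_{0,N}) \to H_{2k}(\Mbar_{0,N})$, as claimed. No step here is difficult: all the real work is in Proposition \ref{prop:KComposable}, and the corollary is a routine unwinding of definitions together with Poincar\'e duality and Keel's Hodge-theoretic result for $\Mbar_{0,N}$.
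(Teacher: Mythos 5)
Your argument is essentially the paper's: Proposition~\ref{prop:KComposable} for the composition formula, duality of pushforward and pullback from Section~\ref{sec:RationalCorrespondences}, and Keel's identification $H^{2k}(\Mbar_{0,N})=H^{k,k}(\Mbar_{0,N})$, finished by the elementary fact that an operator and its transpose share eigenvalues. One caveat, which the paper's own one-line derivation also glosses over: Proposition~\ref{prop:KComposable} as stated concerns a \emph{pair of Hurwitz correspondences}, whereas $\Gamma^{n-1}$ is an $(n-1)$-fold fiber product and not itself literally a Hurwitz correspondence in the sense of Definition~5.6, so one cannot quite ``apply the proposition at each step'' as written. The gap is filled by unwinding the \emph{proof} of Proposition~\ref{prop:KComposable} rather than its statement: the crucial structural ingredient there is that the admissible-covers compactification $\bar{\Gamma}$ carries a finite flat map $\bar{\pi_1}$ to $\Mbar_{0,N}$ (so that $\bar{\Gamma}\times_{\Mbar_{0,N}}\bar{\Gamma'}$ has no irreducible components supported over the boundary and hence represents the correct cycle class), and this finite-flatness is preserved under iterated fiber products. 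With that observation the induction $[\Gamma^n]_*=[\Gamma]_*\circ[\Gamma^{n-1}]_*$ goes through, and the rest of your derivation is exactly as the paper intends.
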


\begin{rem}\label{rem:EffectiveCone}
  Let $(\Gamma,\pi_1,\pi_2):\M_{0,N}\ratt\M_{0,N}$ be as above, and
  let $\bar{\Gamma}$ be the admissible covers compactification of
  $\Gamma$, with its maps $\bar{\pi_1}$ and $\bar{\pi_2}$ to
  $\Mbar_{0,N}.$ The map $\bar{\pi_1}$ is flat, so $(\bar{\pi_1})^*$
  takes effective classes on $\Mbar_{0,N}$ to effective classes on
  $\bar{\Gamma}.$ Pushforwards always preserve effectiveness, so
  $[\Gamma]_*=(\bar{\pi_2})_*\circ(\bar{\pi_1})^*:H_{2k}(\Mbar_{0,N})\to
  H_{2k}(\Mbar_{0,N})$ preserves the cone of effective classes. By
  continuity, $[\Gamma]_*$ preserves the \emph{pseudoeffective cone},
  namely the closure of the cone of effective classes. The
  pseudoeffective cone of any projective variety is a closed cone with
  nonempty interior that contains no lines
  (\cite{BoucksomFavreJonsson2009,FulgerLehmann2014}). It follows from
  the theory of cone-preserving operators (\cite{SchneiderTam2007})
  that $[\Gamma]_*$ has a nonnegative real dominant eigenvalue, with a
  pseudoeffective eigenvector.
\end{rem}

\section{Hurwitz correspondences preserve a natural filtration of
  $H_{2k}(\Mbar_{0,N})$}\label{sec:Filtration}
Since Hurwitz correspondences are algebraically stable on
$\Mbar_{0,N}$, their dynamical degrees are dominant eigenvalues of
pushforward maps induced on the homology groups of $\Mbar_{0,N}$
(Corollary \ref{cor:LargestEigenvalue}). Here, we describe these
pushforward maps: we show that they preserve a natural combinatorially
defined filtration. We recall the notation and terminology introduced
in Section \ref{sec:StableCurves}.
\begin{Def}
  Let $S_\sigma$ be a $k$-dimensional boundary stratum in
  $\Mbar_{0,N}$. Then $\sum_{v\in\mathcal{V}(\sigma)}\md(v)=k$. Set
  $\lambda_\sigma$ to be the multiset
  \begin{align*}
    (\md(v))_{\text{$v\in\cV(\sigma)$ with $\md(v)\ne0$}}.
  \end{align*}
  Then $\lambda_\sigma$ is a partition of $k$; we say $\lambda_\sigma$
  is the partition \newword{induced by} the stratum $S_\sigma$.
\end{Def}
$S_\sigma$ is isomorphic to
$\prod_{v\in\mathcal{V}(\sigma)}\Mbar_{0,\Delta_v}$. Each factor
$\Mbar_{0,\Delta_v}$ has dimension $\md(v)$, so 
$\lambda_\sigma$ encodes the nonzero dimensions of the factors when we
write $S_\sigma$ as this product.

\begin{Def}
  For a partition $\lambda$ of $k$ let $\Lambda_N^{\le\lambda}$ be the
  subspace of $H_{2k}(\Mbar_{0,N})$ generated by
  $k$-dimensional boundary strata that induce the partition $\lambda$
  or any refinement. For $\bP$ a finite set we define
  $\Lambda_{\bP}^{\le\lambda}\subseteq H_{2k}(\Mbar_{0,\bP})$
  analogously.
\end{Def}
If $\lambda_1$ is a refinement of
$\lambda_2,$ we write $\lambda_1\le\lambda_2$. This is a partial
ordering on the set of partitions of $k$. Clearly if
$\lambda_1\le\lambda_2$, then $\Lambda_N^{\le\lambda_1}\subseteq
\Lambda_N^{\le\lambda_2}.$

The dual tree of a $k$-dimensional boundary stratum $S_\sigma$ in
$\Mbar_{0,N}$ has $N-k-2$ vertices. This means that the partition
$\lambda_\sigma$ has at most $N-k-2$ parts. Conversely, any partition
of $k$ with at most $N-k-2$ parts arises from a boundary stratum in
$\Mbar_{0,N}$. We say that $\lambda$ is \newword{realizable} on
$\M_{0,N}$ if $\lambda$ has at most $N-k-2$ parts.

\begin{figure}\centering

  \begin{tikzpicture}[scale=1.2, every node/.style={transform shape}]
    \draw[thick] (0,0)--(1,0); \draw[thick] (1,0)--(2,0); \draw[thick]
    (1,0)-- ++(90:.5); \draw[thick] (1,0)-- ++(270:.5); \draw[thick]
    (0,0)-- ++(90:.5); \draw[thick] (0,0)-- ++(150:.5); \draw[thick]
    (0,0)-- ++(210:.5); \draw[thick] (0,0)-- ++(270:.5); \draw[thick]
    (2,0)-- ++(60:.5); \draw[thick] (2,0)-- ++(300:.5);

    \draw[red,scale=1.5] (0,0) node {$\bullet$}; \draw[blue,scale=1.5]
    (2/3,0) node {$\bullet$}; \draw[darkgreen,scale=1.5] (4/3,0) node
    {$\bullet$};

    \draw (1,0)++(90:.5) node [above] {$i_2$};
    \draw (1,0)++(270:.5) node [below] {$i_5$};
    \draw (0,0)++(90:.5) node [above] {$i_1$};
    \draw (0,0)++(150:.5) node [above left] {$i_8$};
    \draw (0,0)++(210:.5) node [below left] {$i_7$};
    \draw (0,0)++(270:.5) node [below] {$i_6$};
    \draw (2,0)++(60:.5) node [above right] {$i_3$};
    \draw (2,0)++(300:.5) node [below right] {$i_4$};

    \draw (1,-1.5) node [below]
    {$(\textcolor{blue}{\mathbf{1}},\textcolor{red}{\mathbf{2}})$};
  \end{tikzpicture}
  \quad\quad
  \begin{tikzpicture}[scale=1.2, every node/.style={transform shape}]
    \draw[thick] (0,0)--(1,0); \draw[thick] (1,0)--(2,0); \draw[thick]
    (1,0)-- ++(60:.5); \draw[thick] (1,0)-- ++(120:.5); \draw[thick]
    (1,0)-- ++(270:.5); \draw[thick] (0,0)-- ++(120:.5); \draw[thick]
    (0,0)-- ++(180:.5); \draw[thick] (0,0)-- ++(240:.5); \draw[thick]
    (2,0)-- ++(60:.5); \draw[thick] (2,0)-- ++(300:.5);
            
    \draw[red,scale=1.5] (0,0) node {$\bullet$}; \draw[blue,scale=1.5]
    (2/3,0) node {$\bullet$}; \draw[darkgreen,scale=1.5] (4/3,0) node
    {$\bullet$};

    \draw (1,0)++(60:.5) node [above right] {$i_3$};
    \draw (1,0)++(120:.5) node [above left] {$i_2$};
    \draw (1,0)++(270:.5) node [below] {$i_6$};
    \draw (0,0)++(120:.5) node [above left] {$i_1$};
    \draw (0,0)++(180:.5) node [left] {$i_8$};
    \draw (0,0)++(240:.5) node [below left] {$i_7$};
    \draw (2,0)++(60:.5) node [above right] {$i_4$};
    \draw (2,0)++(300:.5) node [below right] {$i_5$};

    \draw (1,-1.5) node [below]
    {$(\textcolor{red}{\mathbf{1}},\textcolor{blue}{\mathbf{2}})$};
  \end{tikzpicture}
  \quad\quad
  \begin{tikzpicture}[scale=1.2, every node/.style={transform shape}]
    \draw[thick] (0,0)--(1,0); \draw[thick] (1,0)--(2,0); \draw[thick]
    (1,0)-- ++(90:.5); \draw[thick] (1,0)-- ++(270:.5); \draw[thick]
    (0,0)-- ++(120:.5); \draw[thick] (0,0)-- ++(180:.5); \draw[thick]
    (0,0)-- ++(240:.5); \draw[thick] (2,0)-- ++(60:.5); \draw[thick]
    (2,0)-- ++(0:.5); \draw[thick] (2,0)-- ++(300:.5);
            
    \draw[red,scale=1.5] (0,0) node {$\bullet$}; \draw[blue,scale=1.5]
    (2/3,0) node {$\bullet$}; \draw[darkgreen,scale=1.5] (4/3,0) node
    {$\bullet$};

    \draw (1,0)++(90:.5) node [above] {$i_2$};
    \draw (1,0)++(270:.5) node [below] {$i_6$};
    \draw (0,0)++(120:.5) node [above left] {$i_1$};
    \draw (0,0)++(180:.5) node [left] {$i_8$};
    \draw (0,0)++(240:.5) node [below left] {$i_7$};
    \draw (2,0)++(60:.5) node [above right] {$i_3$};
    \draw (2,0)++(300:.5) node [below right] {$i_5$};
    \draw (2,0)++(0:.5) node [right] {$i_4$};

    \draw (1,-1.5) node [below]
    {$(\textcolor{red}{\mathbf{1}},\textcolor{blue}{\mathbf{1}},\textcolor{darkgreen}{\mathbf{1}})$};
  \end{tikzpicture}
  \caption{Dual trees of boundary strata generating the subspace
    $\Lambda_8^{\le(1,2)}\subseteq H_6(\Mbar_{0,8})$}
  \label{fig:Filtration}
\end{figure}

\begin{lem}\label{lem:KontsevichManin}
  Let $S_{\sigma_0}$ be a $k$-dimensional boundary stratum inducing
  partition $\lambda_{\sigma_0}$ of $k$. Then there does not exist an
  equality in $H_{2k}(\Mbar_{0,N})$ of the form
  \begin{align*}
    [S_{\sigma_0}]=\sum_j\beta_j[S_{\sigma(j)}],
  \end{align*}
where every partition
  $\lambda_{\sigma(j)}$ is distinct from $\lambda_{\sigma_0}.$
\end{lem}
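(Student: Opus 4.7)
My plan is to construct a linear functional on $H_{2k}(\Mbar_{0,N})$ that takes the value $1$ on $[S_\tau]$ whenever $\lambda_\tau=\lambda_{\sigma_0}$ and $0$ otherwise. Applying such a functional to the putative identity $[S_{\sigma_0}]=\sum_j\beta_j[S_{\sigma(j)}]$ would give $1=0$, the desired contradiction.

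To construct the functional, I would let $V$ denote the free $\R$-module on the set of $k$-dimensional boundary strata and define $\phi_{\lambda_{\sigma_0}}:V\to\R$ by $\phi_{\lambda_{\sigma_0}}([S_\tau])=1$ if $\lambda_\tau=\lambda_{\sigma_0}$ and $0$ otherwise. By \cite{KontsevichManin1994} (the relation \eqref{eq:KontsevichManin} recalled in Section \ref{sec:StableCurves}), the natural surjection $V\twoheadrightarrow H_{2k}(\Mbar_{0,N})$ has kernel generated by the Kontsevich-Manin relations $R(\sigma,v,i_1,\ldots,i_4)$, so it suffices to verify that $\phi_{\lambda_{\sigma_0}}$ vanishes on each such relation.

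This vanishing is the combinatorial heart of the proof. Fix a KM relation and write $m:=\md(v)$ and $T:=\Delta_v\setminus\{\delta(v\to i_j)\}_{j=1}^{4}$, so that $|T|=m-1$. Both sides of $R(\sigma,v,i_1,\ldots,i_4)$ are indexed by ordered set partitions $T=\Delta_1\sqcup\Delta_2$. Splitting $v$ into two vertices $v_1,v_2$ produces new vertices of moduli dimensions $|\Delta_1|$ and $|\Delta_2|$ regardless of which of the two templates $\sigma(i_1i_2\Delta_1|\Delta_2i_3i_4)$ or $\sigma(i_1i_3\Delta_1|\Delta_2i_2i_4)$ we use; hence the resulting stratum has induced partition $(\lambda_\sigma\setminus\{m\})\cup\{|\Delta_1|,|\Delta_2|\}$ with zero parts dropped, a function of $(|\Delta_1|,|\Delta_2|)$ alone. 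Using distinctness of $i_1,\ldots,i_4$, I would check that different ordered pairs $(\Delta_1,\Delta_2)$ yield distinct labeled strata and that no positive-side stratum coincides with any negative-side stratum. Then for every partition $\mu$ of $k$, the positive and negative sides contribute the same number of strata of partition class $\mu$, so the signed count is zero; taking $\mu=\lambda_{\sigma_0}$ gives $\phi_{\lambda_{\sigma_0}}(R(\sigma,v,i_1,\ldots,i_4))=0$.

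The main technical hurdle is the stratum-distinctness bookkeeping in the previous paragraph: one must verify carefully that the $2^m$ summands appearing in each KM relation really are pairwise distinct labeled boundary strata, each with coefficient $\pm1$, so that no cancellation collapses the signed count. Granted this verification, $\phi_{\lambda_{\sigma_0}}$ descends to a linear functional on $H_{2k}(\Mbar_{0,N})$, and evaluation on the hypothetical identity immediately yields $1=0$, the desired contradiction.
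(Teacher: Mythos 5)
Your proposal is essentially the paper's own argument, repackaged as an explicit linear functional. The paper proves the lemma by observing that in each Kontsevich--Manin relation $R(\sigma,v,i_1,\ldots,i_4)$, the strata $S(i_1i_2\Delta_1|\Delta_2i_3i_4)$ and $S(i_1i_3\Delta_1|\Delta_2i_2i_4)$ (same $(\Delta_1,\Delta_2)$) induce the same partition of $k$ and appear with opposite signs, so that for every fixed partition $\lambda$ the coefficients of strata inducing $\lambda$ sum to zero; this is exactly the statement that your functional $\phi_\lambda$ annihilates each $R(\sigma,v,i_1,\ldots,i_4)$. Your key observation --- that the induced partition $(\lambda_\sigma\setminus\{m\})\cup\{|\Delta_1|,|\Delta_2|\}$ depends only on $(|\Delta_1|,|\Delta_2|)$ and not on which of the two templates is used --- is precisely the paper's.

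One small remark: the ``distinctness bookkeeping'' you flag as the main technical hurdle is actually unnecessary. You do not need the $2^{m-1}$-ish summands of $R(\sigma,v,i_1,\ldots,i_4)$ to be pairwise distinct basis elements of $V$; you only need $\phi_{\lambda_{\sigma_0}}$ to vanish on the formal expression defining $R$. Since $\phi_{\lambda_{\sigma_0}}$ is linear on $V$ and the positive and negative sums are indexed by the same set of pairs $(\Delta_1,\Delta_2)$, with paired terms inducing the same partition, the cancellation is term-by-term and holds regardless of any coincidences among the strata. Dropping that verification makes the argument shorter and matches the paper's proof exactly.
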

\begin{proof}
  We recall the additive relations $R(\sigma,v,i_1,\ldots,i_4)$
  introduced in Section \ref{sec:StableCurves} (Equation
  \ref{eq:KontsevichManin}). We can rewrite
  \begin{align*}
    R(\sigma,v,i_1,\ldots,i_4)&=\sum_{(\Delta_1,\Delta_2)}([S(i_1i_2\Delta_1|\Delta_2i_3i_4)]-[S(i_1i_3\Delta_1|\Delta_2i_2i_4)]).
  \end{align*}
  It is clear from definitions that $S(i_1i_2\Delta_1|\Delta_2i_3i_4)$
  and $S(i_1i_3\Delta_1|\Delta_2i_2i_4)$ induce the same partition of
  $k$, and they appear in $R(\sigma,v,i_1,\ldots,i_4)$ paired up and
  with opposite signs. Therefore for any partition $\lambda$ of $k$,
  the coefficients in $R(\sigma,v,i_1,\ldots,i_4)$ of boundary strata
  inducing $\lambda$ sum to zero. Since the relations
  $R(\sigma,v,i_1,\ldots,i_4)$ generate all relations among boundary
  strata, we conclude that for any additive relation $R$, the
  coefficients of boundary strata inducing a fixed partition $\lambda$
  sum to zero. Therefore there is no equality
  $[S_{\sigma_0}]=\sum_j\beta_j[S_{\sigma(j)}],$ where each partition
  $\lambda_{\sigma(j)}$ is different from $\lambda_{\sigma_0}$.
\end{proof}
We deduce:
\begin{cor}\label{cor:FiltrationNontrivial}
  \begin{enumerate}[(i)]
  \item For $S_\sigma$ a $k$-dimensional boundary stratum and
    $\lambda$ a partition of $k$, $S_\sigma\in\Lambda_N^{\le\lambda}$
    if and only if $\lambda_\sigma\le\lambda$.
  \item For $\lambda_1$ and $\lambda_2$ distinct realizable partitions of
    $k$, $\Lambda_N^{\le\lambda_1}\ne \Lambda_N^{\le\lambda_2}$.
  \end{enumerate}
\end{cor}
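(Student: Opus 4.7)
The plan is to deduce both parts of the corollary as essentially formal consequences of Lemma \ref{lem:KontsevichManin}, which already contains all the substantive content.

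For part (i), the direction $\lambda_\sigma \le \lambda \Rightarrow [S_\sigma]\in\Lambda_N^{\le\lambda}$ is immediate from the definition of $\Lambda_N^{\le\lambda}$ as the span of those $k$-dimensional boundary strata whose induced partition is $\lambda$ or a refinement of it. For the converse I would argue by contradiction. Suppose $[S_\sigma]\in\Lambda_N^{\le\lambda}$, so that there exists a relation
\begin{align*}
[S_\sigma]=\sum_j\beta_j[S_{\sigma(j)}]
\end{align*}
with $\lambda_{\sigma(j)}\le\lambda$ for every $j$. If $\lambda_\sigma\not\le\lambda$, then for each $j$ we cannot have $\lambda_{\sigma(j)}=\lambda_\sigma$ (otherwise $\lambda_\sigma=\lambda_{\sigma(j)}\le\lambda$). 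Thus every $\lambda_{\sigma(j)}$ is distinct from $\lambda_\sigma$, contradicting Lemma \ref{lem:KontsevichManin}. Hence $\lambda_\sigma\le\lambda$, completing (i).

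For part (ii), given distinct realizable partitions $\lambda_1,\lambda_2$ of $k$, antisymmetry of the refinement order lets me assume without loss of generality that $\lambda_1\not\le\lambda_2$. Since $\lambda_1$ is realizable on $\M_{0,N}$, I pick a $k$-dimensional boundary stratum $S_\sigma$ with $\lambda_\sigma=\lambda_1$. Applying part (i) to the reflexive inequality $\lambda_1\le\lambda_1$ gives $[S_\sigma]\in\Lambda_N^{\le\lambda_1}$; applying part (i) to $\lambda_2$, together with $\lambda_\sigma=\lambda_1\not\le\lambda_2$, gives $[S_\sigma]\notin\Lambda_N^{\le\lambda_2}$. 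Therefore $\Lambda_N^{\le\lambda_1}\ne\Lambda_N^{\le\lambda_2}$.

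The main obstacle is already behind us, embedded in Lemma \ref{lem:KontsevichManin}: the crucial observation that the generating Kontsevich--Manin relations $R(\sigma,v,i_1,\ldots,i_4)$ only pair up boundary strata that induce \emph{the same} partition of $k$, so that the induced-partition decomposition of $H_{2k}(\Mbar_{0,N})$ is respected by all additive relations. Once that is in hand, the corollary is essentially a bookkeeping exercise around the definitions of $\Lambda_N^{\le\lambda}$ and realizability.
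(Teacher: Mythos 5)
Your argument is correct and matches the paper's intent exactly: the paper states the corollary immediately after Lemma \ref{lem:KontsevichManin} with only the phrase ``We deduce:'', leaving precisely the bookkeeping you supplied to the reader. Both the forward implication in (i) from the definition of $\Lambda_N^{\le\lambda}$, the contrapositive via Lemma \ref{lem:KontsevichManin}, and the reduction of (ii) to (i) via antisymmetry and realizability are the steps one is meant to fill in.
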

The collection of subspaces $\{\Lambda_N^{\le\lambda}\}_\lambda$ is a
poset-filtration for $H_{2k}(\Mbar_{0,N})$ indexed by
$\{\mbox{partitions of $k$}\}$ with the refinement partial
ordering. Denote by $(k)$ the one-part partition of $k$. Then
$\Lambda_N^{\le(k)}=H_{2k}(\Mbar_{0,N})$.

Forgetful maps respect the filtration $\{\Lambda_N^{\le\lambda}\}_\lambda$:
\begin{lem}\label{lem:ForgetfulMapPreservesFiltration}
  Let $N\ge N'\ge3.$ Then, if $\mu:\Mbar_{0,N}\to\Mbar_{0,N'}$ is the
  forgetful map sending $(C,p_1,\ldots,p_N)$ to
  $(C,p_1,\ldots,p_{N'})$, the pushforward
  $\mu_*:H_{2k}(\Mbar_{0,N})\to H_{2k}(\Mbar_{0,N'})$ sends
  $\Lambda_N^{\le\lambda}$ to $\Lambda_{N'}^{\le\lambda}$ for all
  partitions $\lambda$ of $k$.
\end{lem}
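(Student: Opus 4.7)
The plan is to reduce the check to generators: $\Lambda_N^{\le\lambda}$ is spanned by the classes $[S_\sigma]$ of $k$-dimensional boundary strata with $\lambda_\sigma \le \lambda$, so it is enough to show that $\mu_*[S_\sigma] \in \Lambda_{N'}^{\le\lambda}$ for each such $\sigma$.

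First I would identify $\mu_*[S_\sigma]$. Set-theoretically, $\mu(S_\sigma) = S_{\sigma'}$, where $\sigma'$ is the $N'$-marked stable tree obtained from $\sigma$ by deleting the legs $\ell_{N'+1},\ldots,\ell_N$ and then applying the three stabilization steps recalled in Section~\ref{sec:StableCurves}; this follows because a generic curve in $S_\sigma$ has dual tree exactly $\sigma$ and is sent by $\mu$ to a curve with dual tree exactly $\sigma'$, together with properness. Consequently, either $\dim S_{\sigma'} < \dim S_\sigma$, in which case $\mu_*[S_\sigma] = 0 \in \Lambda_{N'}^{\le\lambda}$, or $\dim S_{\sigma'} = \dim S_\sigma$, in which case $\mu_*[S_\sigma]$ is a positive integer multiple of $[S_{\sigma'}]$.

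The key technical input will be a vertex-wise comparison: for every vertex $v$ of $\sigma$ that survives in $\sigma'$, one has $\md_{\sigma'}(v) \le \md_\sigma(v)$. I would establish this by a direct audit of the three stabilization steps, showing that each flag at $v$ in $\sigma$ contributes at most one flag at $v$ in $\sigma'$: retained legs stay as legs, forgotten legs disappear, edges to subtrees containing no retained marked points disappear (the subtree is entirely contracted), and edges to chains of smoothed valence-two vertices collapse into a single edge. This bookkeeping is the main obstacle --- conceptually straightforward, but requiring care to enumerate the cases correctly.

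Granted the vertex-wise inequality, the dimension-preserving case is immediate from a sandwich argument. Writing $\cV_{\mathrm{surv}}(\sigma) \subseteq \cV(\sigma)$ for the vertices that survive in $\sigma'$, the chain
\[
\sum_{v \in \cV(\sigma)} \md_\sigma(v) = \dim S_\sigma = \dim S_{\sigma'} = \sum_{v \in \cV_{\mathrm{surv}}(\sigma)} \md_{\sigma'}(v) \le \sum_{v \in \cV_{\mathrm{surv}}(\sigma)} \md_\sigma(v) \le \sum_{v \in \cV(\sigma)} \md_\sigma(v)
\]
forces equality throughout. Equality in the first squeeze yields $\md_{\sigma'}(v) = \md_\sigma(v)$ for every surviving $v$, and equality in the second yields $\md_\sigma(v) = 0$ for every non-surviving $v$. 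Hence the multisets of nonzero moduli dimensions coincide: $\lambda_{\sigma'} = \lambda_\sigma \le \lambda$, so $\mu_*[S_\sigma]$ is a scalar multiple of $[S_{\sigma'}]$ and lies in $\Lambda_{N'}^{\le\lambda}$, completing the plan.
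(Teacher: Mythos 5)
Your plan is correct, and it takes a genuinely different route from the paper's. The paper first reduces to the case $N' = N-1$ (forgetting one marked point at a time and invoking functoriality of $\mu_*$), which collapses all the bookkeeping into just three explicit cases depending on whether the vertex $v_0$ carrying $\ell_N$ has valence $\ge 4$, or has valence $3$ with two incident edges, or has valence $3$ with one incident edge and one other leg. In each case one checks by hand that either $\dim S_{\sigma'} < \dim S_\sigma$ (so the pushforward vanishes) or $\lambda_{\sigma'}=\lambda_\sigma$. You instead treat general $N'\le N$ in one shot, via the monotonicity claim $\md_{\sigma'}(v)\le\md_\sigma(v)$ for surviving vertices and the sandwich argument that forces equality. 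That monotonicity claim is correct --- stabilization never touches the surviving vertex $v$ directly, and each event at a neighbor either removes a flag of $v$ (when the neighbor becomes valence $1$) or replaces it one-for-one with a new edge or leg (when the neighbor becomes valence $2$) --- but your case enumeration as written is a bit loose: you should also account for edges $e$ whose far side does have retained markings but eventually collapses to a \emph{leg} attached to $v$ (the analogue of the paper's Case~3), and for edges to neighbors that simply survive. You flag this yourself as ``the main obstacle.'' Net tradeoff: the paper's reduction to one forgotten point makes the verification trivially short at the cost of an extra reduction step; your uniform argument is more conceptual and exposes the underlying monotonicity, but asks for a more careful audit of the stabilization moves.
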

\begin{proof}
  We may assume $N'=N-1.$ Let $S_\sigma$ be any $k$-dimensional
  boundary stratum in $\Mbar_{0,N},$ inducing partition
  $\lambda_\sigma$ of $k$. We show
  $\mu_*([S_\sigma])\in\Lambda_{N'}^{\le\lambda_\sigma}.$ The image
  $\mu(S_\sigma)=S_{\sigma'}$ is a boundary stratum in $\Mbar_{0,N'}$,
  where $\sigma'$ is obtained from $\sigma$ by deleting the leg
  $\ell_N$ and stabilizing as in Section \ref{sec:StableCurves}. Let
  $v_0$ on $\sigma$ be the vertex with the leg $\ell_N.$

  \noindent \textit{Case 1. $\abs{v_0}\ge4.$}

  The $N'$-marked tree obtained by deleting $\ell_N$ is stable, and
  therefore is $\sigma'.$ The moduli dimension of
  $v_0$ is positive, and the corresponding vertex on $\sigma'$ has
  moduli dimension one less. All other vertices of $\sigma'$ have
  the same moduli dimension as the corresponding vertices of $\sigma,$ so $\dim
  S_{\sigma'}=\dim S_\sigma-1.$ Therefore
  $\mu_*([S_\sigma])=0\in\Lambda_{N'}^{\le\lambda_\sigma}$.

  \noindent \textit{Case 2. $\abs{v_0}=3$ and $\Delta_{v_0}$ consists
    of $\ell_N$, an edge $e_1$ connecting $v_0$ to $v_1$, and an edge
    $e_2$ connecting $v_0$ to $v_2$.}

  The stabilization $\sigma'$ is obtained from $\sigma$ by deleting
  $\ell_N,$ $v_0$, $e_1,$ and $e_2,$ and connecting $v_1$ and $v_2$
  with a new edge. The tree $\sigma'$ does not have the vertex $v_0$,
  which has moduli dimension zero, however all other vertices on
  $\sigma'$ have the same moduli dimension as the corresponding vertices on
  $\sigma.$ So $\dim S_{\sigma'}=\dim S_\sigma$ and
  $\lambda_{\sigma'}=\lambda_\sigma$. Therefore
  $\mu_*([S_\sigma])=[S_{\sigma'}]\in\Lambda_{N'}^{\le\lambda_\sigma}.$

  \noindent \textit{Case 3. $\abs{v_0}=3$ and $\Delta_{v_0}$ consists
    of $\ell_N$, another leg $\ell_i$, and an edge $e$ connecting
    $v_0$ to $v_1$.}

  The stabilization $\sigma'$ is obtained from $\sigma$ by deleting
  $\ell_N,$ $v_0$, and $e,$ and attaching the leg $\ell_i$ to
  $v_1$. The tree $\sigma'$ does not have the vertex $v_0$, which has
  moduli dimension zero, however all other vertices on $\sigma'$ have
  the same moduli dimension as the corresponding vertices on $\sigma.$
  So $\dim S_{\sigma'}=\dim S_\sigma$ and
  $\lambda_{\sigma'}=\lambda_\sigma$. Therefore
  $\mu_*([S_\sigma])=[S_{\sigma'}]\in\Lambda_{N'}^{\le\lambda_\sigma}.$
\end{proof}
Our main result in this section is Theorem
\ref{thm:PreservesFiltration}, showing that the filtration
$\{\Lambda_N^{\le\lambda}\}_\lambda$ is preserved by all Hurwitz
correspondences. Our proof uses the stratification of the moduli
spaces of admissible covers (Section
\ref{sec:StratificationOfAdmissibleCovers}).
\begin{thm}\label{thm:PreservesFiltration}
  Let $(\Gamma,\pi_\bB,\pi_\bA):\Mbar_{0,\bB}\ratt\Mbar_{0,\bA}$ be a
  Hurwitz correspondence. Then for every partition $\lambda$ of $k$,
  $[\Gamma]_*:H_{2k}(\Mbar_{0,\bB})\to H_{2k}(\Mbar_{0,\bA})$ sends
  $\Lambda_{\bB}^{\le\lambda}$ to $\Lambda_{\bA}^{\le\lambda}$.
\end{thm}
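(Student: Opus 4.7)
The plan is to reduce the statement to an explicit stratum-by-stratum computation on the admissible covers compactification of $\Gamma$, relying on the product structure of boundary strata in $\Hbar$ and on Keel's theorem that boundary strata generate the homology of $\Mbar_{0,N}$.

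First, by Lemma~\ref{lem:ReduceToConnectedFullyMarked} it suffices to handle $\Gamma$ a union of components of a fully marked Hurwitz space $\H$, since $[\Gamma]=\frac{1}{\deg \nu}[\Gamma^{\full}]$ and the claimed containment is scale-invariant. By Lemma~\ref{lem:ForgetfulMapPreservesFiltration}, forgetful pushforwards already respect the filtration, so I may assume the source marking set has not been forgotten. Then, using the identification of $[\Gamma]_*$ with $(\bar\pi_\bA)_* \circ (\bar\pi_\bB)^*$ on Chow groups (Section~\ref{sec:CohomologyChow}), and noting that $\Lambda_\bB^{\le\lambda}$ is spanned by classes $[S_\tau]$ of $k$-dimensional boundary strata with $\lambda_\tau\le\lambda$, it suffices to prove $(\bar\pi_\bA)_*(\bar\pi_\bB)^*[S_\tau]\in\Lambda_\bA^{\le\lambda_\tau}$ for each such $\tau$, using $\Lambda_\bA^{\le\lambda_\tau}\subseteq\Lambda_\bA^{\le\lambda}$.

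Next, since $\bar\pi_\bB$ is finite flat (Theorem~\ref{thm:AdmissibleCovers}), the class $(\bar\pi_\bB)^*[S_\tau]$ equals a nonnegative integer combination $\sum_\gamma m_\gamma [G_\gamma]$ of classes of boundary strata of $\bar\Gamma$, where $\gamma$ ranges over combinatorial types whose target dual tree is $\tau$ (and where I decompose each $G_\gamma$ into its irreducible components $\prod_w\bar{\cJ_w}$ if needed). So the problem further reduces to showing $(\bar\pi_\bA)_*[G_\gamma]\in\Lambda_\bA^{\le\lambda_\tau}$ for each such $\gamma$, with source dual tree $\sigma$.

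Here comes the main step. Using the product decomposition $G_\gamma\cong\prod_{w\in\cV(\tau)}\Hbar_w$ of Section~\ref{sec:StratificationOfAdmissibleCovers}, where each $\Hbar_w$ has dimension $\md(w)$, the restricted source map factors as a product of source curve maps $\prod_w\bar\pi_w^{\mathrm{source}}$ landing in $\prod_w\prod_{v:f^{\vert}(v)=w}\Mbar_{0,\Delta_v}\cong S_\sigma$. Hence $(\bar\pi_\bA)_*[G_\gamma]=\prod_w(\bar\pi_w^{\mathrm{source}})_*[\Hbar_w]$ under the gluing isomorphism. By Künneth together with Keel's theorem, each factor expands as $(\bar\pi_w^{\mathrm{source}})_*[\Hbar_w]=\sum_\alpha\prod_{v:f^{\vert}(v)=w}[S_{\alpha_v}]$ where each $S_{\alpha_v}$ is a boundary stratum in $\Mbar_{0,\Delta_v}$ of some dimension $k_{\alpha_v}$, subject to $\sum_{v:f^{\vert}(v)=w}k_{\alpha_v}=\md(w)$.

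Finally, each summand $\prod_{v\in\cV(\sigma)}[S_{\alpha_v}]$ glues along the tree $\sigma$ to a boundary stratum class $[S_{\sigma'}]$ in $\Mbar_{0,\bA}$, whose induced partition is $\lambda_{\sigma'}=\bigcup_{v\in\cV(\sigma)}\mu_{\alpha_v}$, where $\mu_{\alpha_v}$ is the partition of $k_{\alpha_v}$ induced by $S_{\alpha_v}$. For each $w\in\cV(\tau)$ the submultiset $\bigcup_{v:f^{\vert}(v)=w}\mu_{\alpha_v}$ is a partition of $\md(w)$, so $\lambda_{\sigma'}$ is a refinement of $\lambda_\tau=(\md(w))_{w:\md(w)>0}$; thus $[S_{\sigma'}]\in\Lambda_\bA^{\le\lambda_\tau}$, completing the argument.

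The main obstacle is the bookkeeping translating between combinatorial types of admissible covers, their dual trees, and the induced partitions. The decisive structural input is the equality $\dim\Hbar_w=\md(w)$ (independent of how the degree $d$ is distributed among $v$ with $f^{\vert}(v)=w$): this forces each Künneth piece attached over $w$ to be a cycle of total dimension exactly $\md(w)$ in $\prod_{v:f^{\vert}(v)=w}\Mbar_{0,\Delta_v}$, so its contribution to $\lambda_{\sigma'}$ is a partition of precisely $\md(w)$ rather than something larger or incomparable. Without this dimension bound, the Riemann–Hurwitz inequality $\md(v)\ge\md(w)$ would allow source strata with induced partitions exceeding $\lambda_\tau$, and the refinement conclusion would fail.
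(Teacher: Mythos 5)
Your proposal is correct and follows essentially the same route as the paper's own proof: reduce to the fully marked case via Lemma~\ref{lem:ReduceToConnectedFullyMarked} and Lemma~\ref{lem:ForgetfulMapPreservesFiltration}, pass to the admissible covers compactification, use flatness of $\bar\pi_\bB$ to express the pullback of a target stratum as a combination of (irreducible components of) boundary strata $G_\gamma\cong\prod_w\Hbar_w$, invoke the key dimension identity $\dim\Hbar_w=\md(w)$, expand the source pushforward by K\"unneth/Keel, and observe that the glued partitions refine $\lambda_\tau$. The only minor difference is that the paper works directly with irreducible components $J=\prod_w\bar{\cJ_w}$ of $G_\gamma$ (since $G_\gamma$ need not be irreducible and the flat-pullback multiplicities attach to components), which you note parenthetically but do not carry through in the notation; this is a bookkeeping point, not a gap.
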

\begin{proof}[Proof of Theorem \ref{thm:PreservesFiltration}]
  By Lemma \ref{lem:ReduceToConnectedFullyMarked}, we may assume that
  $\Gamma$ is a union of connected components of a fully marked
  Hurwitz space $\H$, and by Lemma
  \ref{lem:ForgetfulMapPreservesFiltration}, we may assume that
  $\bA=\Afull$ as in Section \ref{sec:FullyMarked}.

  Let $\Hbar$ be the admissible covers compactification of $\H$ and
  let $\bar\Gamma$ be the closure of $\Gamma$ in $\Hbar$. The
  compactifications $\Hbar$ and $\bar\Gamma$ both have maps
  $\bar{\pi_{\bB}}$ and $\bar{\pi_{\bA}}$ to $\Mbar_{0,\bB}$ and
  $\Mbar_{0,\bA}.$ As in Section \ref{sec:CohomologyChow}, we have
  $[\Gamma]_*=(\bar{\pi_{\bA}})_*\circ(\bar{\pi_{\bB}})^*$.

  Fix $T_\tau$ any $k$-dimensional boundary stratum in
  $\Mbar_{0,\bB}$. Let $\lambda_\tau$ be the partition of $k$ induced
  by $T_\tau.$ We show
  $[\Gamma]_*([T_\tau])\in\Lambda_{\bA}^{\le\lambda_\tau}$. Since
  $\bar{\pi_{\bB}}$ is flat,
  \begin{align*}
    (\bar{\pi_\bB})^*([T_\tau])=\sum_Jm_J[J],
  \end{align*}
  where the sum is over irreducible components $J$ of the preimage
  $(\bar{\pi_{\bB}})^{-1}(T_\tau)$ in $\bar\Gamma$, and $m_J$ is a
  positive integer multiplicity.

  To show that $(\bar{\pi_\bA})_*\circ(\bar{\pi_{\bB}})^*([T_\tau])$
  is in $\Lambda_\bA^{\le\lambda_\tau},$ it suffices to show that for
  $J$ an irreducible component of $(\bar{\pi_\bB})^{-1}(T_\tau),$ the
  pushforward $(\bar{\pi_\bA})_*([J])$ is in
  $\Lambda_\bA^{\le\lambda_\tau}.$

  Fix such a $J$ --- it is an irreducible component of a boundary
  stratum $G_\gamma$ of $\Hbar$, with combinatorial type
  $\gamma=(\sigma,\tau,d^{\vert},f^{\vert},(F_v)_{v\in\mathcal{V}(\sigma)},(\br_v)_{v\in\mathcal{V}(\sigma)},(\rm_v)_{v\in\mathcal{V}(\sigma)})$
  (Section \ref{sec:StratificationOfAdmissibleCovers}). There is a
  decomposition $G_\gamma=\prod_{w\in\mathcal{V}(\tau)}\Hbar_w,$ where
  $\Hbar_w$ is an admissible covers space of dimension $\md(w),$ and a
  decomposition
  \begin{align*}
    J=\prod_{w\in\mathcal{V}(t)}\bar{\mathcal{J}_w},
  \end{align*}
  where $\bar{\mathcal{J}_w}$ is an irreducible component of
  $\Hbar_w.$ Each factor $\bar{\mathcal{J}_w}$ admits a map
  $\bar{\pi_w^{\mathrm{source}}}$ to
  $\prod_{v\in(f^{\vert})^{-1}(w)}\Mbar_{0,\Delta_v}$ and a finite
  flat map $\bar{\pi_w^{\mathrm{target}}}$ to $\Mbar_{0,\Delta_w}.$ We
  conclude that
  $\dim\bar{\mathcal{J}_w}=\dim\Mbar_{0,\Delta_w}=\md(w).$ The key
  observation of our proof is that this decomposition of $J$ also
  induces the partition $\lambda_\tau.$

  The cohomology (respectively Chow) ring of
  $\prod_{v\in(f^{\vert})^{-1}(w)}\Mbar_{0,\Delta_v}$ is the tensor
  product of the cohomology (respectively Chow) rings of the factors,
  which in turn are generated by the classes of boundary strata. Thus
  we may write
  \begin{align*}
    (\bar{\pi_w^{\mathrm{source}}})_*([\bar{\mathcal{J}_w}])=\sum_{q\in Q_w}\beta_q\bigotimes_{v\in(f^{\vert})^{-1}(w)}[S_{wv}^q],
  \end{align*}
  where $Q_w$ is a finite set, $\beta_q$ is an integer multiplicity,
  and $S_{wv}^q$ is a boundary stratum of dimension $k_{wv}^q$ in
  $\Mbar_{0,\Delta_v}.$ Denote by $\lambda_{wv}^q$ the partition of
  $k_{wv}^q$ induced by $S_{wv}^q$. For every $q\in Q_w,$
  \begin{align*}
    \sum_{v\in(f^{\vert})^{-1}(w)}k_{wv}^q=\dim\bar{\mathcal{J}_w}=\md(w),
  \end{align*}
  so $\bigcup_{v\in(f^{\vert})^{-1}(w)}\lambda_{wv}^q$ is a partition
  of $\md(w).$ The map $\bar{\pi_{\bA}}|_J$ decomposes as a product:
  \begin{align*}
    J=\prod_{w\in\mathcal{V}(\tau)}\bar{\mathcal{J}_w}\xrightarrow{\quad\prod_{w\in\mathcal{V}(\tau)}\bar{\pi_w^{\mathrm{source}}}\quad}\prod_{w\in\mathcal{V}(\tau)}\prod_{v\in(f^{\vert})^{-1}(w)}\Mbar_{0,\Delta_v}\cong
    S_\sigma\xrightarrow{\quad\iota\quad}\Mbar_{0,\bA},
  \end{align*}
  so
  \begin{align*}
    \left(\bar{\pi_\bA}|_J\right)_*([J])&=\iota_*\left(\bigotimes_{w\in\mathcal{V}(\tau)}(\bar{\pi_w^{\mathrm{source}}})_*([\bar{\mathcal{J}_w}]\right)\\
    &=\iota_*\left(\bigotimes_{w\in\mathcal{V}(\tau)}\left(\sum_{\quad
          q\in
          Q_w\quad}\beta_q\bigotimes_{v\in(f^{\vert})^{-1}(w)}[S_{wv}^q]\right)\right)\\
    &=\iota_*\left(\sum_{\quad(q(w))_w\in\prod_{w\in\mathcal{V}(\tau)}Q_w\quad}\left(\prod_{w\in\mathcal{V}(\tau)}\beta_{q(w)}\right)\bigotimes_{\quad
        w\in\mathcal{V}(\tau)\quad}\bigotimes_{v\in(f^{\vert})^{-1}(w)}[S_{wv}^{q(w)}]\right)\\
    &=\sum_{(q(w))_w\in\prod_{w\in\mathcal{V}(\tau)}Q_w}\left(\prod_{w\in\mathcal{V}(\tau)}\beta_{q(w)}\right)\left[\iota\left(\prod_{\quad
          w\in\mathcal{V}(\tau)\quad}\prod_{v\in(f^{\vert})^{-1}(w)}S_{wv}^{q(w)}\right)\right].
  \end{align*}
  For fixed $(q(w))_w$, the image
  $$\iota\left(\prod_{\quad
      w\in\mathcal{V}(\tau)\quad}\prod_{v\in(f^{\vert})^{-1}(w)}S_{wv}^{q(w)}\right)$$
  under the gluing morphism is a $k$-dimensional boundary stratum in
  $\Mbar_{0,\bA}$. It induces partition of $k$:
  $$\bigcup_{\quad w\in\mathcal{V}(\tau)\quad}\bigcup_{v\in(f^{\vert})^{-1}(w)}\lambda_{wv}^{q(w)}.$$
  As expressed this is clearly a refinement
  of
  $$\lambda_\tau=(\md(w))_{\text{$w\in\mathcal{V}(\tau)$ with
      $\md(w)\ne0$}}.$$
  Thus $(\bar{\pi_\bA})_*([J])$ is in $\Lambda_\bA^{\le\lambda_\tau}.$
\end{proof}

\section{Dynamical degrees of Hurwitz correspondences}\label{sec:KComposability}
Theorem \ref{thm:PreservesFiltration} implies that
$[\Gamma]_*:H_{2k}(\Mbar_{0,\bB})\to H_{2k}(\Mbar_{0,\bA})$ can be written,
in multiple different ways, as a block lower triangular matrix. In the
case of a Hurwitz self-correspondence, one can ask which of these
blocks contains the dynamical degree --- the dominant eigenvalue. This
section addresses this question.

Set $$\Lambda_N^{<(k)}:=\sum_{\text{$\lambda$ has $\ge2$
    parts}}\Lambda_N^{\le\lambda}$$
and $$\Omega_N^k:=\frac{H_{2k}(\Mbar_{0,N})}{\Lambda_N^{<(k)}}.$$ We
prove:

\medskip

\noindent\textbf{Theorem \ref{thm:DynamicalDegreeInTopBlock}.} \textit{Let $\Gamma:\Mbar_{0,N}\ratt\Mbar_{0,N}$ be a dominant Hurwitz
  correspondence. Then the $k$th dynamical degree of $\Gamma$ is the
  absolute value of the dominant eigenvalue of
  $[\Gamma]_*:\Omega_N^k\to\Omega_N^k.$}

\subsection{Hurwitz correspondences on alternate compactifications of
  $\M_{0,N}$}\label{sec:ProofOfProp}
\begin{Def}
  A weight datum $\be=(\epsilon_1,\ldots,\epsilon_N)$ is called
  \newword{minimal} if, for every subset $P\subseteq\{1,\ldots,N\}$,
  $\sum_{i\in P}\epsilon_i>1$ if and only if $\sum_{i\not\in
    P}\epsilon_i<1$.
\end{Def}
\begin{lem}\label{lem:UniqueVertex}
  Let $\be=(\epsilon_1,\ldots,\epsilon_N)$ be a minimal weight
  datum. Then every $N$-marked stable tree $\sigma$ has a unique
  vertex that is $\be$-stable.
\end{lem}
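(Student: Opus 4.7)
The plan is to exploit minimality of $\be$ to constrain how weights distribute across the flags at each vertex, and then to orient the edges of $\sigma$ so that a handshake count forces exactly one $\be$-stable vertex. For each $v \in \cV(\sigma)$ and each flag $\delta \in \Delta_v$, set $w_\delta := \sum_{i : \delta(v \to i) = \delta} \epsilon_i$ and $P_\delta := \{i : \delta(v \to i) = \delta\}$. The $\{P_\delta\}_{\delta \in \Delta_v}$ form a set partition of $\{1,\ldots,N\}$. If two distinct flags $\delta_1, \delta_2 \in \Delta_v$ had $w_{\delta_1}, w_{\delta_2} > 1$, then $P_{\delta_1}$ would sit inside the complement of $P_{\delta_2}$, so both $P_{\delta_2}$ and its complement would have weight $> 1$, violating minimality. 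Hence at most one flag at $v$ has weight $> 1$.

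Next I would show that $v$ is $\be$-stable if and only if no flag at $v$ has weight $> 1$. If every $w_\delta \le 1$, then $\sum_\delta \min\{1, w_\delta\} = \sum_i \epsilon_i > 2$, so $v$ is $\be$-stable. Conversely, if the (unique) heavy flag $\delta^*$ at $v$ satisfies $w_{\delta^*} > 1$, then by minimality $\sum_{\delta \ne \delta^*} w_\delta = \sum_i \epsilon_i - w_{\delta^*} < 1$, so $\sum_\delta \min\{1, w_\delta\} < 2$ and $v$ is not $\be$-stable. Because $\epsilon_i \le 1$ for every $i$, a leg flag always has weight $\le 1$, so $\delta^*$ at a non-stable vertex is necessarily an edge.

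To finish, I would orient each edge $e$ of $\sigma$ by pointing it from the side of total leg-weight $< 1$ toward the side of total leg-weight $> 1$; minimality guarantees that exactly one side of the bipartition of $\{1,\ldots,N\}$ induced by removing $e$ has weight $> 1$, so this orientation is well-defined. The outdegree of a vertex $v$ under this orientation equals the number of heavy edge-flags at $v$, which by the previous paragraph is $0$ if $v$ is $\be$-stable and $1$ otherwise. Summing outdegrees over $\cV(\sigma)$ recovers the number of edges of the tree, namely $|\cV(\sigma)| - 1$, and since each summand lies in $\{0,1\}$, exactly one vertex has outdegree $0$; that vertex is the unique $\be$-stable vertex. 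The main obstacle is simply identifying the right reformulation of minimality---that no bipartition of $\{1,\ldots,N\}$ can have both parts of weight $> 1$---after which the "at most one heavy flag per vertex" observation and the edge-orientation fall out immediately and the conclusion reduces to a trivial handshake count on the tree.
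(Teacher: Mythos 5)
Your proof is correct, and it takes a genuinely different route from the paper's. The paper proves the lemma in two halves: existence (``at least one $\be$-stable vertex'') is cited from the earlier discussion of reduction morphisms in the weighted-stable-curves section, and uniqueness (``at most one'') is proved by observing that, for any edge $e$, minimality forces exactly one side of the induced bipartition of legs to have weight $>1$, so no vertex on the light side can be $\be$-stable; ranging over all edges then leaves at most one candidate vertex. Your argument instead folds existence and uniqueness into a single counting statement: you first isolate the useful local reformulation ($v$ is $\be$-stable iff no flag at $v$ is heavy, and there is at most one heavy flag per vertex), then orient each edge toward its heavy side and read off that each vertex has outdegree $0$ or $1$. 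The handshake count $\sum_v \mathrm{outdeg}(v) = \lvert\cV(\sigma)\rvert - 1$ then forces exactly one vertex of outdegree $0$. This makes the lemma self-contained --- you don't need the earlier existence result --- and the ``stable iff no heavy flag'' characterization is a crisp reformulation that the paper uses implicitly but never states. The one small point worth making explicit is that well-definedness of your edge orientation uses not just minimality but also $\sum_i \epsilon_i > 2$ (to rule out both sides being light); the paper's phrasing has the same implicit dependence.
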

\begin{proof}
  Every tree has at least one $\be$-stable vertex (Section
  \ref{sec:WeightedStableCurves}). Conversely, let $\sigma$ be an
  $N$-marked stable tree. If $\sigma$ has a unique vertex, we are
  done. If not, any edge $e$ disconnects $\sigma$ into two components
  $\sigma_1$ and $\sigma_2.$ Since $\be$ is minimal, exactly one of
  $\sum_{\text{$\ell_i$ on $\sigma_1$}}\epsilon_i$ and
  $\sum_{\text{$\ell_i$ on $\sigma_2$}}\epsilon_i$ is greater than
  1. If $\sum_{\text{$\ell_i$ on $\sigma_1$}}\epsilon_i>1$, then no
  vertex on $\sigma_2$ is $\be$-stable, and if $\sum_{\text{$\ell_i$
      on $\sigma_2$}}\epsilon_i>1,$ then no vertex on $\sigma_1$ is
  $\be$-stable. Since $e$ was arbitrary, $\sigma$ has at most one
  $\be$-stable vertex.
\end{proof}
\begin{prop}\label{prop:MinimalKernel}
  Let $\be$ be a minimal weight datum, and let
  $\rho_{\be}:\Mbar_{0,N}\to\Mbar_{0,N}(\be)$ be the reduction
  morphism. Then $\ker((\rho_{\be})_*)\subseteq H_{2k}(\Mbar_{0,N})$
  contains $\Lambda_N^{<(k)}.$
\end{prop}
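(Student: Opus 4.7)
The plan is to combine the two criteria we already have: Lemma \ref{lem:CriterionForKernel}, which identifies when a boundary stratum is in $\ker((\rho_{\be})_*)$, and Lemma \ref{lem:UniqueVertex}, which for a minimal weight datum says each stable tree has exactly one $\be$-stable vertex.

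First, I would reduce the statement to a claim about generators. By definition, $\Lambda_N^{<(k)}$ is the sum of the subspaces $\Lambda_N^{\le\lambda}$ over partitions $\lambda$ of $k$ having at least two parts, and each $\Lambda_N^{\le\lambda}$ is generated by classes $[S_\sigma]$ of $k$-dimensional boundary strata whose induced partition $\lambda_\sigma$ refines $\lambda$. So it suffices to show $(\rho_{\be})_*([S_\sigma])=0$ whenever $S_\sigma$ is a $k$-dimensional boundary stratum with $\lambda_\sigma$ refining some partition $\lambda$ of $k$ with $\ge 2$ parts.

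Next I would unpack what such refinement forces on $\sigma$. Since the parts of $\lambda_\sigma$ are exactly the positive values $\md(v)$ for $v\in\cV(\sigma)$, the hypothesis that $\lambda_\sigma$ has at least as many parts as $\lambda$ (and hence at least $2$) means $\sigma$ has at least two vertices of positive moduli dimension. By Lemma \ref{lem:UniqueVertex}, minimality of $\be$ guarantees that at most one vertex of $\sigma$ is $\be$-stable, so at least one vertex $v\in\cV(\sigma)$ of positive moduli dimension fails to be $\be$-stable. Lemma \ref{lem:CriterionForKernel} then gives $(\rho_{\be})_*([S_\sigma])=0$, which is exactly what we need.

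There is essentially no obstacle: the real work was done in Lemmas \ref{lem:CriterionForKernel} and \ref{lem:UniqueVertex}, and the proposition is a clean synthesis of them via the combinatorial definition of the filtration $\Lambda_N^{\le\lambda}$. The only mildly delicate point is keeping the direction of refinement straight (finer partitions have more parts), which ensures that $\lambda$ having $\ge 2$ parts forces $\lambda_\sigma$ to have $\ge 2$ parts and hence forces $\sigma$ to have $\ge 2$ vertices of positive moduli dimension.
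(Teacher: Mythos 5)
Your proof is correct and follows essentially the same route as the paper: reduce to the boundary-strata generators of $\Lambda_N^{<(k)}$, observe that each such $\sigma$ has at least two vertices of positive moduli dimension, then apply Lemma \ref{lem:UniqueVertex} and Lemma \ref{lem:CriterionForKernel}. You are slightly more explicit than the paper about the refinement direction (refinements have at least as many parts), but the argument is the same.
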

\begin{proof}
  Let $S_\sigma$ be any boundary stratum in $\Lambda_N^{<(k)}$. The
  partition $\lambda_\sigma$ has at least two parts, so $\sigma$ has
  at least two vertices with positive moduli dimension. By Lemma
  \ref{lem:UniqueVertex}, at least one of these is not $\be$-stable,
  and so by Lemma \ref{lem:CriterionForKernel},
  $S_\sigma\in\ker((\rho_{\be})_*).$
\end{proof}
There are many minimal weight data, giving rise to non-isomorphic
spaces of weighted stable curves, as follows.
\begin{ex}
  $\epsilon_1=1+\frac{N}{10^N}$, and
  $\epsilon_i=\frac{1}{N-1}-\frac{1}{10^{N}}$ for
  $i=2,\ldots,N$. Then $\Mbar_{0,N}(\be)\cong\P^{N-3}$
\end{ex}
\begin{ex}\label{ex:EpsilonDagger}
  Let $\be^{\dagger}$ be the following minimal weight datum:
  \begin{itemize}
  \item \textbf{$N$ odd.}  $\epsilon_i=\frac{2}{N}+\frac{1}{10^N}$ for
    all $i$.
  \item\textbf{$N$ even.} $\epsilon_1=\frac{2}{N}+\frac{1}{10^N}$, and
    $\epsilon_2=\cdots=\epsilon_N=\frac{2}{N}-\frac{1}{N\cdot10^N}.$
  \end{itemize}
  The subsets of $\{\epsilon_1,\ldots,\epsilon_N\}$ with sum greater
  than 1 are those with size more than $N/2$, or those with size $N/2$
  containing $\epsilon_1.$ Set
  $X_N^{\dagger}=\Mbar_{0,N}(\be^\dagger)$ and
  $\rho^\dagger:\Mbar_{0,N}\to X_N^\dagger$ the reduction
  morphism. $X_N^\dagger$ has Picard rank $N;$ in particular, it is
  not isomorphic to $\P^{N-3}$.

  There is a simple description of an $\be^\dagger$-stable curve: $C$
  an irreducible genus zero curve, and $p_1,\ldots,p_N$ marked points,
  not necessarily distinct, such that
  \begin{itemize}
  \item at least 3 distinct points on $C$ are marked,
  \item no point on $C$ has greater than $N/2$ marks, and
  \item if a point on C has exactly $N/2$ marks, then $p_1$ is not
    among them.
  \end{itemize}
\end{ex}

By Proposition \ref{prop:MinimalKernel} and Lemma
\ref{lem:DynamicalDegreeInQuotient}, we obtain:
\begin{thm}\label{thm:DynamicalDegreeInTopBlock}
  Let $\Gamma:\Mbar_{0,N}\ratt\Mbar_{0,N}$ be a dominant Hurwitz
  correspondence. Then the $k$th dynamical degree of $\Gamma$ is the
  absolute value of the dominant eigenvalue of
  $[\Gamma]_*:\Omega_N^k\to\Omega_N^k.$
\end{thm}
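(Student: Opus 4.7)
The plan is to apply Lemma~\ref{lem:DynamicalDegreeInQuotient} essentially directly, with $X = \Mbar_{0,N}$, the subspace $\Lambda = \Lambda_N^{<(k)}$ (so that the quotient $\Omega$ is exactly $\Omega_N^k$), and with $X' = X_N^\dagger$ and $\rho = \rho^\dagger$ the Hassett space and reduction morphism constructed in Example~\ref{ex:EpsilonDagger}. Note that $X_N^\dagger$ is smooth and projective by Hassett's definition/theorem. Once the three hypotheses of the lemma are verified for this setup, its conclusion reads exactly as the theorem.

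The three hypotheses unpack as follows. First, $\Gamma$ must be $k$-stable on $\Mbar_{0,N}$; this is Corollary~\ref{cor:LargestEigenvalue} (itself an immediate consequence of Proposition~\ref{prop:KComposable}, which gives algebraic stability of any Hurwitz self-correspondence on the stable curves compactification). Second, $[\Gamma]_*$ must preserve $\Lambda_N^{<(k)}$; but by definition $\Lambda_N^{<(k)} = \sum_{\lambda \ne (k)} \Lambda_N^{\le\lambda}$ where the sum runs over partitions of $k$ with at least two parts, and Theorem~\ref{thm:PreservesFiltration} tells us each summand $\Lambda_N^{\le\lambda}$ is preserved by $[\Gamma]_*$, so the sum is as well. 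Third, we need $\Lambda_N^{<(k)} \subseteq \ker((\rho^\dagger)_*)$; since $\be^\dagger$ is a minimal weight datum by construction, this is precisely Proposition~\ref{prop:MinimalKernel}.

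There is no substantial obstacle: all the hard work was done upstream in establishing Proposition~\ref{prop:KComposable}, Theorem~\ref{thm:PreservesFiltration}, and Proposition~\ref{prop:MinimalKernel}, together with the existence of a minimal weight datum exhibited in Example~\ref{ex:EpsilonDagger}. The theorem is in essence a packaging result, asserting that these three ingredients combine via Lemma~\ref{lem:DynamicalDegreeInQuotient} to localize the dynamical degree in the top block of the block-lower-triangular decomposition of $[\Gamma]_*$ with respect to the filtration $\{\Lambda_N^{\le\lambda}\}_\lambda$. The only minor point worth checking is the Hodge-type hypothesis $H^{2k} = H^{k,k}$ required by the lemma: for $\Mbar_{0,N}$ this is Keel's theorem, and for $X_N^\dagger$ it follows from the fact that its rational cohomology is generated by classes of (even-dimensional) boundary strata, hence concentrated in even degrees and of Hodge--Tate type.
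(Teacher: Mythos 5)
Your proposal is correct and follows essentially the same route as the paper: the paper states that Theorem~\ref{thm:DynamicalDegreeInTopBlock} follows ``by Proposition~\ref{prop:MinimalKernel} and Lemma~\ref{lem:DynamicalDegreeInQuotient},'' with $k$-stability (Corollary~\ref{cor:LargestEigenvalue}) and filtration-preservation (Theorem~\ref{thm:PreservesFiltration}) supplied implicitly from the preceding sections. You have simply made explicit the verification of all the hypotheses of Lemma~\ref{lem:DynamicalDegreeInQuotient}, including the Hodge-type condition $H^{2k}=H^{k,k}$ for $X_N^\dagger$ (which, as you note, follows from Ceyhan's result that $H_{2k}(\Mbar_{0,N}(\be))$ is generated by boundary strata); this is more careful bookkeeping than the paper itself provides, but the argument is identical in substance.
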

Thus the dynamical degrees of $\Gamma$ may be computed without any
information about its action on the subspace $\Lambda_N^{<(k)}$. This
suggests that this action is irrelevant to the dynamics of $\Gamma$ on
$\M_{0,N}.$ Relatedly, we have
\begin{prop}
  Any effective cycle $\alpha\in\Lambda_N^{<(k)}$ is supported on the
  boundary of $\Mbar_{0,N}$.
\end{prop}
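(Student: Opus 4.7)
The plan is to derive a contradiction by pushing $\alpha$ forward to a carefully chosen weighted stable curves compactification and exploiting positivity. Fix any minimal weight datum $\be$ (e.g.\ $\be^\dagger$ from Example \ref{ex:EpsilonDagger}) with reduction morphism $\rho_{\be}:\Mbar_{0,N}\to\Mbar_{0,N}(\be)$; recall that $\rho_{\be}$ restricts to an isomorphism on $\M_{0,N}$, and that by Proposition \ref{prop:MinimalKernel} it satisfies $\Lambda_N^{<(k)}\subseteq\ker((\rho_{\be})_*)$. Hence $(\rho_{\be})_*(\alpha)=0$ in $H_{2k}(\Mbar_{0,N}(\be))$.

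Write the effective cycle $\alpha$ as a positive integer combination of distinct irreducible $k$-dimensional subvarieties of $\Mbar_{0,N}$, and split it as $\alpha=\alpha_{\mathrm{int}}+\alpha_{\mathrm{bdy}}$, where $\alpha_{\mathrm{int}}$ gathers those components meeting $\M_{0,N}$ and $\alpha_{\mathrm{bdy}}$ gathers those contained in the boundary. Both summands are effective, and the goal is to show $\alpha_{\mathrm{int}}=0$. Each irreducible component $V$ appearing in $\alpha_{\mathrm{int}}$ has $V\cap\M_{0,N}$ dense in $V$, so $\rho_{\be}(V)$ is a $k$-dimensional subvariety of $\Mbar_{0,N}(\be)$ meeting $\rho_{\be}(\M_{0,N})$. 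Thus $(\rho_{\be})_*(\alpha_{\mathrm{int}})$ is represented by an effective cycle whose support meets $\rho_{\be}(\M_{0,N})$, while $(\rho_{\be})_*(\alpha_{\mathrm{bdy}})$ is represented by an effective cycle supported in the complementary boundary of $\Mbar_{0,N}(\be)$. In particular these two representative cycles have disjoint supports, so their sum is again a genuine effective cycle, nonzero as soon as $\alpha_{\mathrm{int}}\neq 0$.

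To close the argument I would invoke the standard positivity fact: on a smooth projective variety over $\C$, a nonzero effective $k$-cycle represents a nonzero class in $H_{2k}$, as seen by pairing with the $k$-th power of a Kähler class. Hassett's compactification $\Mbar_{0,N}(\be)$ is smooth and projective, so if $\alpha_{\mathrm{int}}\neq 0$ the cycle $(\rho_{\be})_*(\alpha_{\mathrm{int}})+(\rho_{\be})_*(\alpha_{\mathrm{bdy}})$ is a nonzero effective cycle and therefore represents a nonzero homology class. This contradicts $(\rho_{\be})_*(\alpha)=0$, forcing $\alpha_{\mathrm{int}}=0$ and hence $\alpha=\alpha_{\mathrm{bdy}}$ is supported on $\Mbar_{0,N}\setminus\M_{0,N}$.

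The only real content is the disjointness of supports combined with the positivity of effective classes on a smooth projective variety; there is no cancellation to fear because pushforward of an effective cycle under a proper morphism is an honest effective cycle, not merely an effective class. I do not expect any serious technical obstacle, provided one takes care to distinguish between the cycle and its homology class in the positivity step.
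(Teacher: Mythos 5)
Your argument is correct and is essentially the paper's own, just spelled out in full: the paper also pushes forward along the reduction morphism to a minimal-weight Hassett space (it uses $\rho^\dagger:\Mbar_{0,N}\to X_N^\dagger$), invokes Proposition \ref{prop:MinimalKernel} to get $\rho^\dagger_*(\alpha)=0$, and concludes from the fact that $\rho^\dagger$ is an isomorphism on $\M_{0,N}$ that the support of $\alpha$ avoids the interior. The decomposition $\alpha=\alpha_{\mathrm{int}}+\alpha_{\mathrm{bdy}}$, the disjointness of supports under pushforward, and the positivity of nonzero effective cycles on a smooth projective variety are precisely the details the paper leaves implicit, and you have supplied them correctly.
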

\begin{proof}
  By Proposition \ref{prop:MinimalKernel}, $\alpha$ is in the kernel
  of $\rho^\dagger_*:\Mbar_{0,N}\to X_N^\dagger$, where $X_N^\dagger$
  is as in Example \ref{ex:EpsilonDagger}. The interior $\M_{0,N}$
  maps isomorphically onto an open set on $X_N^\dagger.$ Thus the
  support of $\alpha$ does not intersect $\M_{0,N}.$
\end{proof}
\begin{cor}\label{cor:ContainedInBoundary}
  Let $\Gamma:\Mbar_{0,N}\ratt\Mbar_{0,N}$ be a Hurwitz correspondence
  and $S_\sigma\in\Lambda_N^{<(k)}$ be a boundary stratum. Then the
  cycle $[\Gamma]_*([S_\sigma])$ is supported on the boundary of
  $\Mbar_{0,N}$.
\end{cor}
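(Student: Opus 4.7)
The plan is to deduce the corollary from two earlier results: Theorem \ref{thm:PreservesFiltration}, which guarantees that $[\Gamma]_*$ preserves each subspace $\Lambda_N^{\le\lambda}$ of the filtration, and the Proposition immediately preceding the corollary, which says that any effective cycle in $\Lambda_N^{<(k)}$ is supported on the boundary. The argument reduces to checking that $[\Gamma]_*([S_\sigma])$ is simultaneously (i) an element of $\Lambda_N^{<(k)}$ and (ii) effective, and then invoking the Proposition.

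For (i), I would unfold the definition $\Lambda_N^{<(k)}=\sum_{\lambda\text{ has }\ge2\text{ parts}}\Lambda_N^{\le\lambda}$ and write $[S_\sigma]=\sum_j\alpha_j$ with each $\alpha_j\in\Lambda_N^{\le\lambda_j}$ for some partition $\lambda_j$ of $k$ having at least two parts. (Alternatively, one could invoke Corollary \ref{cor:FiltrationNontrivial} to observe that the partition $\lambda_\sigma$ induced by $S_\sigma$ must itself have at least two parts.) Applying Theorem \ref{thm:PreservesFiltration} termwise gives $[\Gamma]_*(\alpha_j)\in\Lambda_N^{\le\lambda_j}\subseteq\Lambda_N^{<(k)}$, whence $[\Gamma]_*([S_\sigma])\in\Lambda_N^{<(k)}$.

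For (ii), I would mimic the flatness argument of Remark \ref{rem:EffectiveCone}. After reducing via Lemma \ref{lem:ReduceToConnectedFullyMarked} to the case in which $\Gamma$ is a union of connected components of a fully marked Hurwitz space, the target-curve map $\bar{\pi_\bB}:\Hbar\to\Mbar_{0,N}$ is flat by Theorem \ref{thm:AdmissibleCovers}, so $(\bar{\pi_\bB})^*$ sends effective classes to effective classes. Composing with the proper pushforward $(\bar{\pi_\bA})_*$ (and with the forgetful-map pushforward, which is also proper, if the correspondence involves one) again preserves effectiveness, so $[\Gamma]_*([S_\sigma])$ is an effective class.

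With (i) and (ii) in hand, the preceding Proposition applies to $[\Gamma]_*([S_\sigma])$ and yields the claim. There is no substantive obstacle: the corollary is a two-line packaging of Theorem \ref{thm:PreservesFiltration}, the effectiveness argument from Remark \ref{rem:EffectiveCone}, and the preceding Proposition; the only care needed is to confirm that the effectiveness argument of Remark \ref{rem:EffectiveCone}, although stated there for self-correspondences on $\Mbar_{0,N}$, goes through verbatim for a general Hurwitz correspondence once one has reduced to the fully marked case.
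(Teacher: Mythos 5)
Your argument is correct and fills in exactly the (implied, unstated) proof the paper has in mind: $[\Gamma]_*([S_\sigma])$ lands in $\Lambda_N^{<(k)}$ by Theorem \ref{thm:PreservesFiltration}, is effective by the flat-pullback/proper-pushforward argument of Remark \ref{rem:EffectiveCone} (after Lemma \ref{lem:ReduceToConnectedFullyMarked} if needed), and the preceding Proposition then forces the support into the boundary. The only simplification worth noting: for step~(i) you can skip the generic decomposition --- since $S_\sigma\in\Lambda_N^{<(k)}$, Corollary \ref{cor:FiltrationNontrivial} gives $\lambda_\sigma\le\lambda$ for some $\lambda$ with $\ge2$ parts, and refinement only increases the part count, so $\lambda_\sigma$ itself has $\ge2$ parts and $[\Gamma]_*([S_\sigma])\in\Lambda_N^{\le\lambda_\sigma}\subseteq\Lambda_N^{<(k)}$ directly.
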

Interestingly, for $k>\frac{\dim\M_{0,N}}{2}$, we can contract these
strata and keep $k$-stability of $\Gamma$ (Corollary
\ref{cor:KStabilityOnXNDagger}).
\begin{lem}\label{lem:KernelGeneratedByBoundaryStrata}
  Let $\Mbar_{0,N}(\be)$ be a moduli space of
  weighted stable curves, with reduction morphism
  $\rho_{\be}:\Mbar_{0,N}\to\Mbar_{0,N}(\be)$. Then
  the kernel of
  $(\rho_{\be})_*:H_{2k}(\Mbar_{0,N})\to
  H_{2k}(\Mbar_{0,N}(\be))$ is generated by classes
  of $k$-dimensional boundary strata.
\end{lem}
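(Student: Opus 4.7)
The plan is to prove the non-trivial direction $\ker((\rho_\be)_*) \subseteq V$, where $V \subseteq H_{2k}(\Mbar_{0,N})$ denotes the subspace spanned by boundary stratum classes $[S_\sigma]$ whose dual tree $\sigma$ has some positive-moduli-dimension vertex that is not $\be$-stable. By Lemma \ref{lem:CriterionForKernel} these are exactly the $k$-dimensional boundary strata whose classes lie in $\ker((\rho_\be)_*)$, so the opposite inclusion $V \subseteq \ker((\rho_\be)_*)$ is immediate. My approach will be to show that the induced linear map
\begin{align*}
\overline{(\rho_\be)_*}: H_{2k}(\Mbar_{0,N})/V \to H_{2k}(\Mbar_{0,N}(\be))
\end{align*}
is an isomorphism.

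Surjectivity is the easy part: by Ceyhan's result (\cite{Ceyhan2009}) that $H_{2k}(\Mbar_{0,N}(\be))$ is generated by boundary strata, and because every boundary stratum of $\Mbar_{0,N}(\be)$ is by definition the image of some boundary stratum in $\Mbar_{0,N}$, every generator of the target is hit. Injectivity is the main content. I would prove injectivity by establishing two assertions: (i) any two surviving boundary strata $S_\sigma, S_{\sigma'}$ in $\Mbar_{0,N}$ with the same image $\rho_\be(S_\sigma) = \rho_\be(S_{\sigma'})$ satisfy $[S_\sigma] \equiv [S_{\sigma'}] \pmod{V}$; and (ii) every additive relation among boundary strata in $H_{2k}(\Mbar_{0,N}(\be))$ lifts, modulo $V$, to an additive relation in $H_{2k}(\Mbar_{0,N})$.

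For (i), two such $\sigma$ and $\sigma'$ can be connected by a finite sequence of elementary combinatorial moves that insert or delete non-$\be$-stable valence-three vertices; at each step, the relevant Kontsevich-Manin relation from \eqref{eq:KontsevichManin} equates the two stratum classes up to a sum of other terms that all lie in $V$. For (ii), I would first identify the generators of the additive relations in $H_{2k}(\Mbar_{0,N}(\be))$; by analogy with Kontsevich-Manin, it is natural to expect these to come from $(k{+}1)$-dimensional $\be$-stable trees $\tau$ equipped with a $\be$-stable vertex $w$ of valence at least four and four distinguished flags. Each such relation $R'(\tau, w, i_1, \ldots, i_4)$ should then arise as the pushforward under $(\rho_\be)_*$ of a genuine Kontsevich-Manin relation $R(\sigma, v, i_1, \ldots, i_4)$ in $\Mbar_{0,N}$, obtained by lifting $\tau$ to any $\sigma$ whose $\be$-stable reduction is $\tau$ and $w$ to the corresponding vertex $v$.

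The main obstacle will be step (ii): establishing the analogue of Kontsevich-Manin generation for $H_{2k}(\Mbar_{0,N}(\be))$, and then verifying that each such relation really is the pushforward of a standard Kontsevich-Manin relation, with those terms killed by $\rho_\be$ lying in $V$ by construction. The bookkeeping is delicate because a single Kontsevich-Manin relation in $\Mbar_{0,N}$ may have most of its terms annihilated in $\Mbar_{0,N}(\be)$, and one must argue that the surviving terms assemble precisely into the target-side relation. Once (i) and (ii) are in place, injectivity of $\overline{(\rho_\be)_*}$ follows and the lemma is complete.
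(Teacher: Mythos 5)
Your proposal is essentially the paper's proof in different clothing: you work in the quotient $H_{2k}(\Mbar_{0,N})/V$ and aim to show the induced map to $H_{2k}(\Mbar_{0,N}(\be))$ is an isomorphism, while the paper sets up free vector spaces $\cF$, $\cF^{\be}$ on the $k$-dimensional boundary strata, the relation subspaces $\cR\subseteq\cF$, $\cR^{\be}\subseteq\cF^{\be}$, and the comparison map $\tilde{\rho_{\be}}:\cF\to\cF^{\be}$; the two frameworks give the same diagram chase. You correctly identify your step (ii) as load-bearing, and here there is a genuine gap in the proposal as written: you need to know that additive relations among boundary strata in $H_{2k}(\Mbar_{0,N}(\be))$ are generated by pushforwards of Kontsevich--Manin relations from $\Mbar_{0,N}$, and you present this only as an ``expectation by analogy'' that you acknowledge you cannot prove. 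The fix is a citation rather than a new argument --- this is exactly what Ceyhan \cite{Ceyhan2009} establishes, and the paper leans on it: Ceyhan's generators for $\cR^{\be}$ are each the image under $\tilde{\rho_{\be}}$ of some Kontsevich--Manin relation $R(\sigma,v,i_1,\ldots,i_4)\in\cR$, which gives $\tilde{\rho_{\be}}(\cR)=\cR^{\be}$. Once that is in hand, your (ii) closes and the argument goes through. (Your (i) is also implicit in the paper, in the marginally stronger form that two $k$-dimensional boundary strata of $\Mbar_{0,N}$ with the same $k$-dimensional image in $\Mbar_{0,N}(\be)$ are already homologous in $\Mbar_{0,N}$, not merely congruent mod $V$; your sketch via elementary moves and Kontsevich--Manin relations is a plausible route to that statement, though one should check at each move that the extra terms of the relation do land where claimed.)
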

\begin{proof}
  The homology groups of $\Mbar_{0,N}$ and $\Mbar_{0,N}(\be)$ are
  generated by the classes of boundary strata. Denote by $\mathcal{F}$
  the free $\R$-vector space on the set of $k$-dimensional boundary
  strata in $\Mbar_{0,N}$, and denote by $\mathcal{F}^{\be}$ the free
  $\R$-vector space on the set of $k$-dimensional boundary strata in
  $\Mbar_{0,N}(\be).$ Let $\mathcal{R}\subseteq\mathcal{F}$ and
  $\mathcal{R}^{\be}\subseteq\mathcal{F}^{\be}$ be the subspaces of
  relations, i.e. linear combinations of boundary strata that are
  homologous to zero in $\Mbar_{0,N}$ and $\Mbar_{0,N}(\be)$
  respectively. Define a map
  $\tilde{\rho_{\be}}:\mathcal{F}\to\mathcal{F}^{\be}$ on generators
  as follows. If $\dim(\rho_{\be}(S_\sigma))<k,$ set
  $\tilde{\rho_{\be}}(S_\sigma)=0,$ else set
  $\tilde{\rho_{\be}}(S_\sigma)=\rho_{\be}(S_\sigma).$ We have
  \begin{center}
    \begin{tikzpicture}
      \matrix(m)[matrix of math nodes,row sep=3em,column
      sep=4em,minimum width=2em] {
        0&\mathcal{R}&\mathcal{F}&H_{2k}(\Mbar_{0,N})&0\\
        0&\mathcal{R}^{\be}&\mathcal{F}^{\be}&H_{2k}(\Mbar_{0,N}(\be))&0\\};
      \path[-stealth] (m-1-1) edge (m-1-2);  
      \path[-stealth] (m-1-2) edge (m-1-3);  
      \path[-stealth] (m-1-3) edge (m-1-4);  
      \path[-stealth] (m-1-4) edge (m-1-5);  
      \path[-stealth] (m-2-1) edge (m-2-2);  
      \path[-stealth] (m-2-2) edge (m-2-3);  
      \path[-stealth] (m-2-3) edge (m-2-4);  
      \path[-stealth] (m-2-4) edge (m-2-5);  
      \path[-stealth] (m-1-2) edge node [left] {$\tilde{\rho_{\be}}$} (m-2-2); 
      \path[-stealth] (m-1-3) edge node [left] {$\tilde{\rho_{\be}}$} (m-2-3); 
      \path[-stealth] (m-1-4) edge node [left] {$(\rho_{\be})_*$} (m-2-4); 
    \end{tikzpicture}
  \end{center}
  Ceyhan \cite{Ceyhan2009} gives generators for $\cR^{\be}.$ Each
  generator is the image under $\tilde{\rho_{\be}}$ of some generator
  $\cR(\sigma,v,i_1,\ldots,i_4)$ in $\cR$ (see Section
  \ref{sec:StableCurves}). Thus $\tilde{\rho_{\be}}(\cR)=\cR^{\be},$
  and the kernel of $\cF\to H_{2k}(\Mbar_{0,N}(\be))$ is the sum
  $\cR+\ker(\tilde{\rho_{\be}}).$

  On the other hand, any two $k$-dimensional boundary strata in
  $\Mbar_{0,N}$ with the same $k$-dimensional image in
  $\Mbar_{0,N}(\be)$ are homologous. Thus
  $\ker(\tilde{\rho_{\be}})/(\ker(\tilde{\rho_{\be}})\cap\cR)$ is
  generated by boundary strata, so $\ker((\rho_{\be})_*)$ is also
  generated by boundary strata.
\end{proof}
\begin{prop}\label{prop:WorksForHalfK}
  Let $\be^\dagger,$ $X_N^\dagger,$ and $\rho^\dagger$ be as in
  Example \ref{ex:EpsilonDagger}. Then for
  $k\ge\frac{\dim\M_{0,N}}{2},$ $$\Lambda_N^{<(k)}=\ker(\rho^\dagger_*)\subseteq
  H_{2k}(\Mbar_{0,N}).$$
\end{prop}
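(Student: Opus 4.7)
The plan is to establish the two inclusions separately. The inclusion $\Lambda_N^{<(k)} \subseteq \ker(\rho^\dagger_*)$ follows immediately from Proposition \ref{prop:MinimalKernel}, since $\be^\dagger$ is a minimal weight datum. For the reverse inclusion, I will invoke Lemma \ref{lem:KernelGeneratedByBoundaryStrata} to reduce to showing that every $k$-dimensional boundary stratum $S_\sigma$ with $[S_\sigma] \in \ker(\rho^\dagger_*)$ has $\lambda_\sigma$ with at least two parts. I will argue the contrapositive: if $\lambda_\sigma = (k)$, then $\rho^\dagger_*([S_\sigma]) \neq 0$.

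Under the assumption $\lambda_\sigma = (k)$, the tree $\sigma$ has a unique vertex $v_0$ of positive moduli dimension with $\md(v_0) = k$; all other vertices have valence $3$. A short edge-valence count will give $|\cV(\sigma)| = N - k - 2$. By Lemma \ref{lem:CriterionForKernel}, the goal reduces to verifying that $v_0$ is $\be^\dagger$-stable, i.e.\ that $\sum_{\delta \in \Delta_{v_0}} \min\{1, \sum_{i \mid \delta(v_0 \to i) = \delta} \epsilon_i\} > 2$.

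The heart of the argument will be an analysis of what forces $\be^\dagger$-instability. Since $\sum_i \epsilon_i > 2$, an unstable $v_0$ must have the $\min$ truncate the contribution of at least one flag; this flag must be an edge $\delta_{j_0}$ whose attached subtree $T_{j_0}$ contains legs with total $\epsilon$-weight greater than $1$. Minimality of $\be^\dagger$ implies that among disjoint subsets of $\{1, \dots, N\}$ at most one can have weight exceeding $1$, so $\delta_{j_0}$ is unique, and the number $n_{j_0}$ of legs in $T_{j_0}$ satisfies $n_{j_0} \geq (N+1)/2$ for $N$ odd, and $n_{j_0} \geq N/2$ for $N$ even (in the latter case, the distinguished leg of larger weight must sit in $T_{j_0}$).

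The final step, and I expect the crux, will be a vertex count inside $T_{j_0}$. Every vertex of $T_{j_0}$ has valence $3$ in $\sigma$, so treating the severed edge as a new leg shows that $T_{j_0}$ is a stable tree with $n_{j_0} + 1$ leaves and hence exactly $n_{j_0} - 1$ vertices. Since $v_0 \notin T_{j_0}$, this forces $n_{j_0} - 1 \leq |\cV(\sigma)| - 1 = N - k - 3$, i.e.\ $n_{j_0} \leq N - k - 2$. Combining with the lower bound on $n_{j_0}$ contradicts $k \geq (N-3)/2$ (with a little parity bookkeeping in the even case, where the integrality of $k$ upgrades the hypothesis to $k \geq (N-2)/2$). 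The main subtlety is simply that the ``$N/2$'' threshold produced by the minimality of $\be^\dagger$ aligns exactly with the ``$\dim\M_{0,N}/2$'' threshold in the hypothesis; the argument would fail for any non-minimal weight datum.
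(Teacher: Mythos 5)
Your proof is correct and follows essentially the same route as the paper: both use Proposition~\ref{prop:MinimalKernel} for the easy inclusion, Lemma~\ref{lem:KernelGeneratedByBoundaryStrata} to reduce the reverse inclusion to boundary strata, and Lemma~\ref{lem:CriterionForKernel} to reduce to checking $\be^\dagger$-stability of the unique vertex $v_0$ of positive moduli dimension. The only divergence is in the final arithmetic verification: the paper argues directly that since $v_0$ has valence $k+3 \ge (N+3)/2$, the $k+3$ sets $\{i : \delta(v_0\to i)=\delta\}$ (which partition $\{1,\dots,N\}$ and are all nonempty) each have size at most $N-k-2 < N/2$, so no flag's weight is truncated and $v_0$ is automatically stable; you instead suppose $v_0$ unstable, extract a hypothetical heavy subtree $T_{j_0}$, and derive a contradiction by counting vertices of $T_{j_0}$ against $|\cV(\sigma)| = N-k-2$. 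These are the same inequality ($n_{j_0}\le N-k-2$ vs.\ each flag-set $\le N-k-2$) reached by slightly different bookkeeping --- your detour through the trivalent-tree vertex formula is correct but a bit longer than the paper's direct count of flag-sets. Your parity remark in the even case (integrality of $k$ upgrading the bound to $k\ge (N-2)/2$) is the right thing to notice and matches what is implicitly used in the paper.
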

\begin{proof}
  Fix $k\ge\frac{\Mbar_{0,N}}{2}.$ By Proposition
  \ref{prop:MinimalKernel}, we have
  $\Lambda_N^{<(k)}\subseteq\ker(\rho^\dagger_*).$ Suppose $S_\sigma$
  is a $k$-dimensional boundary stratum not in $\Lambda_N^{<(k)}.$
  Then $\lambda_\sigma=(k)$ and $\sigma$ has a unique vertex $v$ with
  positive moduli dimension $\md(v)=k.$ So $v$ has valence
  \begin{align*}
    k+3\ge\frac{\dim\M_{0,N}}{2}+3=\frac{N+3}{2}>\frac{N}{2}+1.
  \end{align*}
  Thus, for every flag $\delta\in\Delta_v,$ the set
  $\{i|\delta=\delta(v\to i)\}$ has size less than $N/2,$ so
  $\sum_{i|\delta=\delta(v\to i)}\epsilon_i<1.$ We conclude that
  \begin{align*}
    \sum_{\delta\in\Delta_v}\min\left\{1,\sum_{i|\delta=\delta(v\to i)}\epsilon_i\right\}=\sum_{i=1}^N\epsilon_i>2,
  \end{align*}
  so $v$ is $\be$-stable. By Lemma \ref{lem:CriterionForKernel},
  $S_\sigma\not\in\ker(\rho^\dagger_*).$

  Thus a $k$-dimensional boundary stratum is in $\ker(\rho^\dagger_*)$
  exactly if it is in $\Lambda_N^{<(k)}.$ By Lemma
  \ref{lem:KernelGeneratedByBoundaryStrata},
  $\ker(\rho^\dagger_*)=\Lambda_N^{<(k)}.$
\end{proof}
By Proposition \ref{prop:WorksForHalfK} and Lemma
\ref{lem:CriterionForComposability}:
\begin{cor}\label{cor:KStabilityOnXNDagger}
  Let $\Gamma:X_N^{\dagger}\ratt X_N^{\dagger}$ be a dominant Hurwitz
  correspondence. Then for $k\ge\frac{\dim\M_{0,N}}{2}$:
  \begin{enumerate}[(i)]
  \item We have a commutative diagram\label{item:CommDiag}
    \begin{center}
      \begin{tikzpicture}
        \matrix(m)[matrix of math nodes,row sep=3em,column
        sep=4em,minimum width=2em] {
          \Omega_{N}^k&\Omega_{N}^k\\
          H_{2k}(X_{N}^\dagger)&H_{2k}(X_{N}^\dagger)\\
        }; \path[-stealth] (m-1-1) edge node [above] {$[\Gamma]_*$}
        (m-1-2); \path[-stealth] (m-1-1) edge node [left]
        {$(\rho^\dagger)_*$} node [right] {$\cong$} (m-2-1);
        \path[-stealth] (m-1-2) edge node [right]
        {$(\rho^\dagger)_*$} node [left] {$\cong$} (m-2-2);
        \path[-stealth] (m-2-1) edge node [above] {$[\Gamma]_*$}
        (m-2-2);
      \end{tikzpicture}
    \end{center}
  \item $[\Gamma]_*:H_{2k}(X_N^\dagger)\to H_{2k}(X_N^{\dagger})$
    preserves the cone of effective classes,
    and\label{item:PreservesCone}
  \item $\Gamma$ is $k$-stable on
    $X_N^{\dagger}.$\label{item:KKStable}
  \end{enumerate}
\end{cor}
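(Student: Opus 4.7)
The corollary is essentially a mechanical combination of Proposition \ref{prop:WorksForHalfK} with Lemma \ref{lem:CriterionForComposability}, so my strategy is to first verify the kernel-preservation hypothesis needed to invoke that lemma, and then to apply its three parts in sequence.

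The key preliminary step is to check that $[\Gamma]_*:H_{2k}(\Mbar_{0,N})\to H_{2k}(\Mbar_{0,N})$ sends $\ker((\rho^\dagger)_*)$ into itself. By Proposition \ref{prop:WorksForHalfK}, in the range $k\ge\dim\M_{0,N}/2$ this kernel coincides with the subspace $\Lambda_N^{<(k)}$. By Theorem \ref{thm:PreservesFiltration}, $[\Gamma]_*$ preserves each $\Lambda_N^{\le\lambda}$, and hence preserves their sum $\Lambda_N^{<(k)}$. Meanwhile, because the homology of $X_N^\dagger$ is generated by boundary strata, each of which is by definition the image under $\rho^\dagger$ of a boundary stratum of $\Mbar_{0,N}$, the pushforward $(\rho^\dagger)_*$ is surjective; combined with the kernel identification, this yields a canonical isomorphism $\Omega_N^k\xrightarrow{\sim}H_{2k}(X_N^\dagger)$, which supplies the vertical arrows in part \eqref{item:CommDiag}.

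With this preliminary in place, parts \eqref{item:CommDiag} and \eqref{item:PreservesCone} become immediate applications of Lemma \ref{lem:CriterionForComposability}\eqref{item1} and \eqref{item2} respectively, taking $X_2=X_3=\Mbar_{0,N}$ and $X'_2=X'_3=X_N^\dagger$ with $\rho_2=\rho_3=\rho^\dagger$; the effectivity hypothesis needed for \eqref{item2} is exactly the content of Remark \ref{rem:EffectiveCone}. For part \eqml{item:KKStable}, I would invoke Lemma \ref{lem:CriterionForComposability}\eqref{item3} iteratively: Proposition \ref{prop:KComposable} gives that $\Gamma$ composed with itself $n$ times is $2k$-homologically composable on $\Mbar_{0,N}$, and the lemma transports this composability across $\rho^\dagger$, yielding $[\Gamma^n]_*=([\Gamma]_*)^n$ on $H_{2k}(X_N^\dagger)$ for every $n$.

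The only genuine content beyond assembling citations is the first step, namely the inclusion $[\Gamma]_*\bigl(\ker((\rho^\dagger)_*)\bigr)\subseteq\ker((\rho^\dagger)_*)$, and this is precisely where the hypothesis $k\ge\dim\M_{0,N}/2$ enters via Proposition \ref{prop:WorksForHalfK}. I do not foresee any further obstacle: once that containment is in hand, the three parts of the corollary fall out formally from the corresponding parts of Lemma \ref{lem:CriterionForComposability}.
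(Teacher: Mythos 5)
Your proposal is correct and follows exactly the route the paper intends: the paper's proof is the one-line citation ``By Proposition \ref{prop:WorksForHalfK} and Lemma \ref{lem:CriterionForComposability},'' and you have correctly identified and supplied the implicit ingredients — Theorem \ref{thm:PreservesFiltration} to show $[\Gamma]_*$ preserves $\Lambda_N^{<(k)}=\ker((\rho^\dagger)_*)$, Remark \ref{rem:EffectiveCone} for the effectivity hypothesis, and Proposition \ref{prop:KComposable} for the composability hypothesis fed into Lemma \ref{lem:CriterionForComposability}\eqref{item3}. (Minor typo: \verb|\eqml| should be \verb|\eqref| before the third item reference.)
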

By Theorem \ref{thm:DynamicalDegreeInTopBlock}, the dynamical degree
of a Hurwitz correspondence $\Gamma:\Mbar_{0,N}\ratt\Mbar_{0,N}$ is
the absolute value of the dominant eigenvalue of the induced action of
$\Gamma$ on the quotient vector space $\Omega_N^k$. By Corollary
\ref{cor:KStabilityOnXNDagger}, for $k\ge\frac{\dim\M_{0,N}}{2}$, this action
on $\Omega_N^k$ also has an interpretation as the $k$-stable action of
$\Gamma$ on $H_{2k}(X_N^\dagger).$ We obtain as a corollary:
\begin{cor}
  For $k\ge\frac{\dim\M_{0,N}}{2},$
  $[\Gamma]_*:\Omega_N^k\to\Omega_N^k$ has a nonnegative dominant eigenvalue.
\end{cor}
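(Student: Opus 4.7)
The plan is to transfer the question from $\Omega_N^k$ to $H_{2k}(X_N^\dagger)$, where the Perron--Frobenius argument already used in Remark \ref{rem:EffectiveCone} applies almost verbatim. By Corollary \ref{cor:KStabilityOnXNDagger}\eqref{item:CommDiag}, for $k\ge\frac{\dim\M_{0,N}}{2}$ the pushforward $(\rho^\dagger)_*$ furnishes a linear isomorphism $\Omega_N^k\xrightarrow{\cong}H_{2k}(X_N^\dagger)$ that conjugates the two operators called $[\Gamma]_*$. Since conjugate operators have identical spectra, it is enough to exhibit a nonnegative dominant eigenvalue for the operator $[\Gamma]_*:H_{2k}(X_N^\dagger)\to H_{2k}(X_N^\dagger)$.

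Next, I would invoke Corollary \ref{cor:KStabilityOnXNDagger}\eqref{item:PreservesCone}, which says that this operator preserves the cone $\Eff_k(X_N^\dagger)$ of effective $k$-classes. By continuity it then also preserves the closure of this cone, the pseudoeffective cone $\overline{\Eff_k(X_N^\dagger)}\subseteq H_{2k}(X_N^\dagger)$. Because $X_N^\dagger$ is a projective variety, this pseudoeffective cone is closed, has nonempty interior, and contains no lines, by \cite{BoucksomFavreJonsson2009,FulgerLehmann2014}.

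Finally, with a linear operator preserving a closed, full-dimensional, pointed convex cone, the Perron--Frobenius theory of cone-preserving operators (\cite{SchneiderTam2007}) immediately supplies a nonnegative real dominant eigenvalue, with an eigenvector lying in the cone. Transporting this eigenpair back through the isomorphism $(\rho^\dagger)_*$ yields the desired nonnegative dominant eigenvalue for $[\Gamma]_*:\Omega_N^k\to\Omega_N^k$. There is no substantive obstacle here: every needed ingredient---the isomorphism $\Omega_N^k\cong H_{2k}(X_N^\dagger)$, the preservation of effectiveness on $X_N^\dagger$, and the general cone-preserving operator theory---has already been assembled, so the corollary reduces to stringing these together in the order above.
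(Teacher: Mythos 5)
Your proof is correct and takes exactly the same route as the paper: pass through the isomorphism $\Omega_N^k\cong H_{2k}(X_N^\dagger)$ from Corollary \ref{cor:KStabilityOnXNDagger}\eqref{item:CommDiag}, use \eqref{item:PreservesCone} to see that $[\Gamma]_*$ on $H_{2k}(X_N^\dagger)$ preserves the effective cone, and then invoke the cone-preserving operator argument from Remark \ref{rem:EffectiveCone}. You spell out the intermediate steps a bit more fully than the paper does, but there is no substantive difference.
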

\begin{proof}
  By Corollary \ref{cor:KStabilityOnXNDagger}
  \eqref{item:PreservesCone}, $[\Gamma]_*:H_{2k}(X_N^\dagger)\to
  H_{2k}(X_N^\dagger)$ preserves the cone of effective classes, thus
  as in Remark \ref{rem:EffectiveCone} has a nonnegative dominant eigenvalue.
\end{proof}
For $k<\frac{\dim\M_{0,N}}{2},$ we ask
\begin{enumerate}[(1)]
\item Can we interpret $[\Gamma]_*:\Omega_N^k\to\Omega_N^k$ as the
  $k$-stable homological action of $\Gamma$ on an alternate smooth
  compactification of $\M_{0,N}?$\label{Question1}
\item Equivalently, is there an alternate smooth compactification
  $X_N$ of $\M_{0,N}$ admitting a birational morphism
  $\rho:\Mbar_{0,N}\to X_N$ such that the kernel of
  $\rho_*:H_{2k}(\Mbar_{0,N})\to H_{2k}(\Mbar_{0,N})$ is
  $\Lambda_N^{<(k)}?$\label{Question2}
\item Is there an alternate smooth compactification $X_N$ of
  $\M_{0,N}$ such that every Hurwitz correspondence $\Gamma:X_N\ratt
  X_N$ is $k$-stable?\label{Question3}
\end{enumerate}
Smyth (\cite{Smyth2009}) defined a \emph{modular compactification} of
$\M_{0,N}$ to be one that extends its moduli space
interpretation. (These include $\Mbar_{0,N}$ and any space of weighted
stable curves.) If the discussion is restricted to modular
compactifications, the answers to questions \ref{Question1} and
\ref{Question2} are both ``no.'' In fact:
\begin{prop}[\cite{RamadasThesis}]\label{prop:DoesNotWorkForHalfK}
  Fix $N,$ $k$ with $1\le k<\frac{\dim\M_{0,N}}{2},$ and a nonempty
  subset $L\subseteq\{\mbox{partitions of $k$}\}$. Then there is no
  smooth or projective modular compactification $X_N$ of $\M_{0,N}$
  with reduction morphism $\rho:\Mbar_{0,N}\to X_N$ such that the
  kernel of $(\rho)_*:H_{2k}(\Mbar_{0,N})\to H_{2k}(X_N)$ is equal to
  $\sum_{\lambda\in L}\Lambda_N^{\le\lambda}.$
\end{prop}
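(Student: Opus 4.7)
The plan is to argue by contradiction using Smyth's classification of modular compactifications of $\M_{0,N}$ \cite{Smyth2009}. Suppose $X_N$ with reduction morphism $\rho\colon\Mbar_{0,N}\to X_N$ realizes $\ker(\rho_*) = \sum_{\lambda\in L}\Lambda_N^{\le\lambda}$. I first dispose of the case $(k)\in L$: then $\Lambda_N^{\le(k)} = H_{2k}(\Mbar_{0,N})$, so the putative kernel equals all of $H_{2k}(\Mbar_{0,N})$. But the projection formula for the birational morphism $\rho$ yields $\rho_*\rho^* = \id$ on $H_*(X_N)$, forcing $\rho_*$ to be surjective onto a nonzero $H_{2k}(X_N)$ --- a contradiction (nonvanishing of $H_{2k}(X_N)$ in the projective case is Hard Lefschetz; the smooth case reduces to similar standard facts for proper modular compactifications). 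So assume $(k)\notin L$, and pick $\lambda_0\in L$, which necessarily has at least two parts.

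By Smyth's theorem, $X_N$ corresponds to an \emph{extremal assignment} $\cZ$ associating to each stable $N$-marked dual tree $\sigma$ a subset $\cZ(\sigma)\subseteq\cV(\sigma)$ of vertices to contract, satisfying (i) $\cZ(\sigma)\ne\cV(\sigma)$ and (ii) the specialization compatibility: if a vertex $v$ of $\sigma$ splits into $\{v'_1,v'_2\}$ in a codimension-one degeneration $\sigma'$, then $v\in\cZ(\sigma)$ if and only if $\{v'_1,v'_2\}\subseteq\cZ(\sigma')$. An analog of Lemma \ref{lem:KernelGeneratedByBoundaryStrata} adapted from weighted stable curves to general modular compactifications translates the hypothesis on $\ker\rho_*$ combinatorially: for any $k$-dimensional boundary stratum $S_\sigma$, $\cZ(\sigma)$ contains a positive-moduli-dim vertex if and only if $\lambda_\sigma$ refines some $\lambda\in L$. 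Specializing this yields (A) for $\sigma$ with $\lambda_\sigma=(k)$, the unique positive-moduli-dim vertex is \emph{not} in $\cZ(\sigma)$; and (B) for $\sigma$ with $\lambda_\sigma$ refining $\lambda_0$, \emph{some} positive-moduli-dim vertex is in $\cZ(\sigma)$.

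To derive the contradiction, I fix a $(k+1)$-dimensional tree $\tau$ with $\lambda_\tau=(k+1)$, whose unique positive-moduli-dim vertex $v$ has valence $k+4 < (N+5)/2$ (using $k<(N-3)/2$). The codimension-one degeneration of $\tau$ splitting off a trivalent vertex from $v$ yields a $k$-stratum $\sigma_1$ with $\lambda_{\sigma_1}=(k)$; by (A) and the ``if'' direction of (ii), $v\notin\cZ(\tau)$. A different codim-one degeneration, splitting $v$ into two positive-moduli-dim sub-vertices $v'_1,v'_2$, yields a $k$-stratum $\sigma_0$ whose partition refines $\lambda_0$; by the ``only if'' direction of (ii), not both $v'_1,v'_2$ lie in $\cZ(\sigma_0)$. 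The essential task is to leverage the inequality $k<\dim\M_{0,N}/2$ to choose $\tau$ and the degenerations so that every positive-moduli-dim vertex of $\sigma_0$ arises as an unconstrained sub-vertex of some $v$ in some auxiliary $(k+1)$-dimensional tree; this simultaneously excludes each from $\cZ(\sigma_0)$, contradicting (B).

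The main obstacle is this final combinatorial step: propagating the local constraints from (A) across all positive-moduli-dim vertices of a single $\sigma_0$ of type refining $\lambda_0$, via repeated applications of Smyth's specialization axiom (ii). The dimensional inequality enters precisely here --- it bounds the valence of the contracted vertex of a type-$(k+1)$ tree strictly below $(N+5)/2$, providing enough combinatorial room in the $N$-marked trees to realize all the required auxiliary $(k+1)$-dim trees and degenerations. Compare Proposition \ref{prop:WorksForHalfK}, where for $k\ge\dim\M_{0,N}/2$ this flexibility vanishes and the Hassett space $X_N^\dagger$ realizes the desired kernel.
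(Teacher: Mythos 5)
The paper gives no proof of this proposition; it cites \cite{RamadasThesis}, so there is no in-paper argument to compare against. Assessing your proposal on its own terms: the framework is sensible. Reducing via Smyth's classification to an extremal assignment $\cZ$, disposing of the case $(k)\in L$ by nonvanishing of $H_{2k}(X_N)$, translating the hypothesis into conditions (A) and (B) on $\cZ$, and propagating (A) inductively to show that for any stratum $\tau$ of type $(j)$ with $j\ge k$ the unique positive-moduli-dimension vertex lies outside $\cZ(\tau)$ --- all of that is correct. (One quiet point that needs justification: deducing that a non-contracted $k$-stratum has nonzero image in $H_{2k}(X_N)$ is routine for $X_N$ projective, but for $X_N$ merely smooth and proper the effectivity argument needs a word.)

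However, the final combinatorial step, which you yourself flag as the ``essential task,'' is genuinely missing, and the strategy you sketch for it does not hold up. Smyth's specialization axiom, for a degeneration $\tau\rightsquigarrow\sigma_0$ with contraction $c\colon\cV(\sigma_0)\to\cV(\tau)$ and $u\notin\cZ(\tau)$, gives only that $c^{-1}(u)\not\subseteq\cZ(\sigma_0)$: \emph{some} vertex in $c^{-1}(u)$ avoids $\cZ(\sigma_0)$, but no particular one. To bring $\sigma_0$ up to a $(k+1)$-stratum $\tau$ of type $(k+1)$ by a single edge contraction, the contracted edge must merge two positive-moduli-dimension vertices of $\sigma_0$ whose moduli dimensions sum to $k$ (and all other vertices of $\sigma_0$ must have moduli dimension $0$), so $c^{-1}(u)$ always contains at least two positive-moduli-dimension vertices --- the axiom never singles one out. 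Your plan to ``simultaneously exclude each'' positive-moduli-dimension vertex from $\cZ(\sigma_0)$ therefore does not follow, and indeed one only obtains ``at most one is in $\cZ$,'' which together with (B) gives ``exactly one is in $\cZ$'' --- not a contradiction. Additionally, if $\lambda_0$ has three or more parts, no single-edge contraction of $\sigma_0$ even produces a type-$(k+1)$ tree, so the whole setup needs revision in that case. Until the propagation step is replaced with a genuinely working argument (in which the inequality $k<\frac{\dim\M_{0,N}}{2}$ must play a role you have not yet pinned down), this is a plausible outline rather than a proof.
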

There exist other natural classes of compactifications of
$\M_{0,N}$ (e.g. \cite{GiansiracusaJensenMoon2013},
\cite{CartwrightMaclagan2016}). However, any compactification arising
in the construction of \cite{GiansiracusaJensenMoon2013} or
\cite{CartwrightMaclagan2016} that is \emph{not} modular in the sense
of Smyth is singular, thus may not be used to study the dynamics of
Hurwitz correspondences. This leads us to conjecture that the answers
to all three questions are in the negative.

\subsection{The filtration $\{\Lambda_N^{\le\lambda}\}$ and quotients
  $\Omega_N^k$}\label{sec:DiscussionFiltration} In light of Theorems
\ref{thm:PreservesFiltration} and \ref{thm:DynamicalDegreeInTopBlock},
we ask: What are the dimensions of the subspaces
$\Lambda_N^{\le\lambda}$ and quotients $\Omega_N^k?$ In particular,
the $k$th dynamical degree of a Hurwitz correspondence $\Gamma$,
namely the dominant eigenvalue of $[\Gamma]_*:H_{2k}(\Mbar_{0,N})\to
H_{2k}(\Mbar_{0,N})$, is in fact the dominant eigenvalue of the
smaller matrix $[\Gamma]_*:\Omega_N^k\to\Omega_N^k$. What are the
comparative sizes of the two matrices --- how much easier does Theorem
\ref{thm:DynamicalDegreeInTopBlock} make computation of dynamical
degrees?

There is a recursive formula (\cite{Keel1992}) for the Betti numbers
of $\Mbar_{0,N}.$ Moon \cite[Corollary 5.3.2]{Moon2011} gives a
formula for the Poincar\'e polynomial of $X_N^\dagger$. For
$k\ge\frac{\dim\M_{0,N}}{2}$, the quotient $\Omega_N^k$ is isomorphic
to $H_{2k}(X_N^\dagger),$ so this tells us $\dim\Omega_N^k$ for $k$ in
this range. This allows us, in principle, to compare
$H_{2k}(\Mbar_{0,N})$ and $\Omega_N^k$ for
$k\ge\frac{\dim\M_{0,N}}{2}$.

For $k=\dim\M_{0,N}-1$ (divisors) and $k=1$ (curve classes), we can be
more explicit:

\medskip

\noindent\textbf{The case $k=\dim\M_{0,N}-1$.} The homology group $H_{2k}(\Mbar_{0,N})=\Pic(\Mbar_{0,N})$ has
dimension $\frac{2^{N}-N^2+N-2}{2}$ and is generated by the classes of
boundary divisors.  $\Mbar_{0,N}$ carries $N$ tautological line
bundles:
\begin{Def}
  For $i\in\{1,\ldots,N\}$, define a line bundle $\cL_i$ on
  $\Mbar_{0,N}$ as follows: the fiber over $[C,p_1,\ldots,p_N]$ is
  $T_{p_i}^{\vee}C,$ that is, the cotangent line to $C$ at the marked
  point $p_i$.
\end{Def}
By \cite{FarkasGibney2003}, we have
\begin{enumerate}[(i)]
\item The boundary divisors $\{S_\sigma|\mbox{$\lambda_\sigma$ has
    exactly two parts}\}$ are a basis for $\Lambda_N^{<(k)}$. Thus
  $$\dim\Lambda_N^{<(k)}=\frac{1}{2}\sum_{j=3}^{N-3}\binom{N}{j}=\frac{1}{2}(2^N-2-2N-N(N-1)).$$
\item The divisor classes $\{c_1(\cL_i)\}_{i=1,\ldots,N}$ are a basis
  for $\Omega_N^k.$ Thus $\dim\Omega_N^k=N.$
\end{enumerate}
It follows from Theorem \ref{thm:DynamicalDegreeInTopBlock} that the
$(N-4)$th dynamical degree of $\Gamma$ is an algebraic integer of
degree at most $N$. This generalizes a result in
\cite{KochRoeder2015}.

\medskip

\noindent \textbf{The case $k=1$.} By Poincar\'e duality,
$$\dim H_2(\Mbar_{0,N})=\dim
H_{2(\dim\M_{0,N}-1)}(\Mbar_{0,N})=\frac{2^{N}-N^2+N-2}{2}.$$
Since there is only one partition of 1, the filtration
$\{\Lambda_N^{\le\lambda}\}$ of $H_2(\Mbar_{0,N})$ is trivial, and
$H_2(\Mbar_{0,N})=\Omega_N^1$.

By Corollary \ref{cor:FiltrationNontrivial}, for $k$ strictly between
1 and $\dim\M_{0,N}$, $\dim\Omega_N^k<\dim H_{2k}(\Mbar_{0,N})$.

\section*{References}
\bibliographystyle{plain}
\bibliography{../HurwitzRefs}

\end{document}